\definecolor{red}{rgb}{0.9,0,0}
\definecolor{green}{rgb}{0,0.9,0}
\definecolor{blue}{rgb}{0,0,0.9}
\def\Limsup{\mathop{{\rm Lim}\,{\rm sup}}}
\def\disp{\displaystyle}
\def\ox{\overline{x}}
\def\disp{\displaystyle}
\def\tto{\rightrightarrows}
\def\ve{\varepsilon}
\def\epsilon{\varepsilon}
\def\ox{\bar{x}}
\def\ou{\bar{u}}
\def\gph{\mbox{\rm gph}\,}
\def\dom{\mbox{\rm dom}\,}
\def\O{\Omega}
\def\emp{\emptyset}
\def\dd{\delta}
\def\th{\theta}
\def\Limsup{\mathop{{\rm Lim}\,{\rm sup}}}
\def\Limsup{\mathop{{\rm Lim}\,{\rm sup}}}
\newtheorem{theorem}{Theorem}[section]
\newtheorem{proposition}{Proposition}[section]
\newtheorem{lemma}{Lemma}[section]
\newtheorem{corollary}{Corollary}[section]
\newtheorem{definition}{Definition}[section]
\newtheorem{assumption}{Assumption}[section]
\begin{document}

\title{\bf Second-Order Characterizations of Tilt Stability\\ in Composite Optimization}

\author{Boris S. Mordukhovich\footnote{Department of Mathematics, Wayne State University, Detroit, MI 48202 (aa1086@wayne.edu).} \quad Peipei Tang\footnote{School of Computer and Computing Science, Hangzhou City University, Hangzhou, China (tangpp@hzcu.edu.cn).}\quad Chengjing Wang\footnote{School of Mathematics, Southwest Jiaotong University, Chengdu, China (renascencewang@hotmail.com).}}

\maketitle
\vspace*{-0.2in}
\begin{abstract}
Tilt stability is a fundamental concept of variational analysis and optimization that plays a pivotal role in both theoretical issues and numerical computations. This paper investigates tilt stability of local minimizers for a general class of composite optimization problems in finite dimensions, where extended-real-valued objectives are compositions of parabolically regular and smooth functions. Under the weakest metric subregularity constraint qualification and other verifiable conditions, we establish unified neighborhood and pointbased characterizations of tilt stability via second-order generalized differentiation. The obtained results provide a rigorous theoretical foundation for further developments on variational stability and numerical algorithms of optimization and related topics.
\end{abstract}\vspace*{-0.1in}
\begin{keywords}
variational analysis, generalized differentiation, composite optimization, tilt stability, second-order characterizations, parabolic regularity
\end{keywords}\vspace*{0.1in}
{\bf Mathematics Subject Classification (2020)} 49J52, 49J53, 90C31
\vspace*{-0.12in}

\section{Introduction}\label{intro}\vspace*{-0.05in}

Variational analysis has unequivocally demonstrated that well-formulated stability concepts constitute fundamental pillars of optimization theory and its practical applications. Particularly important are Lipschitzian stability properties, which provide both qualitative characterization and quantitative measures that have been recognized to be indispensable for rigorous analysis and justification of numerical algorithms in problems of optimization and operations research. 

This paper is devoted to studying the notion of {\em tilt stability} for local minimizers, which was introduced by Poliquin and Rockafellar \cite{Poliquin1998} for optimization problems written in the unconstrained extended-real-valued format as follows. Given a function $f:\mathcal{R}^{m}\rightarrow\overline{\mathcal{R}}:=\mathcal{R}\cup\{\infty\}$ and a point $x\in\operatorname{dom}f:=\{
x\in\mathcal{R}^m\;|\;f(x)<\infty\}$, it is said that  $x$ is a {\em tilt-stable local minimizer} of $f$ with modulus $\kappa>0$ if there exists $\delta>0$ such that the mapping 
\begin{align*}
\mathcal{M}_{\delta}:u\mapsto\mathop{\operatorname{arg}\min}_{\|x'-x\|\leq\delta}\{f(x')-f(x)-\langle u,x'-x\rangle\}
\end{align*}
is single-valued and Lipschitz continuous with modulus $\kappa$ on some neighborhood of $u=0$ and that $\mathcal{M}_{\delta}(0)=\{x\}$. The fundamental result of \cite{Poliquin1998} shows that, for a broad class of prox-regular and subdifferentially continuous functions, the tilt stability of local minimizers is completely characterized by the positive-definitiness of the {\em second-order subdifferential/generalized Hessian} in the sense of Mordukhovich \cite{Mordukhovich1992}. Over the years, a variety of second-order characterizations of tilt-stable minimizers for different classes of constrained optimization problems have been established and applied to qualitative and numerical aspects of optimization, control, and operations research. We refer the reader to, e.g., \cite{BenkoGfrererMordukhovich2019,Bonnans2000,Chieu2018,Drusvyatskiy2013,Drusvyatskiy2014,GfrererMordukhovich2015,Khanh2023,MordukhovichNghia2015,
MordukhovichOutrataRamirez2015,MordukhovichNghiaRockafellar2015,Mordukhovich2021} and the bibliographies therein for more details. The recent book \cite{Mordukhovich2024} contains an extensive account of the major achievements on tilt stability, related properties, and their applications in finite and infinite dimensions. Quite new characterizations of tilt-stabile local minimizers for unconstrained extended-real-valued functions with prox-regularity but without subdifferential continuity requirements have been derived in the fresh papers \cite{Helmut2025,Khanh2025,Rockafellar2025} via quadratic bundles.

In this paper, we address tilt stability in composite optimization problem of the type
\begin{align}\label{P}
\mbox{minimize }\; f_{0}(x)+g(F(x))\;\mbox{ over }\;x\in\mathcal{R}^n,
\end{align}
where $f_{0}:\mathcal{R}^{n}\rightarrow\mathcal{R}$, $F:\mathcal{R}^{n}\rightarrow\mathcal{R}^{m}$ are smooth mappings, and where $g:\mathcal{R}^{m}\rightarrow \overline{\mathcal{R}}$ is a lower semicontinuous (l.s.c.) convex function. The framework of \eqref{P} covers many particular classes of problems important in optimization theory and applications, e.g., nonlinear, second-order cone, and semidefinite programmings, where $g$ is the indicator functions of the positive orthant, the second-order (Lorentz, ice-cream) cone, and the positive semidefinite cone, respectively. 

The first characterization of tilt-stable local minimizers was obtained in \cite{Mordukhovich2012} for problems of nonlinear programming (NLPs) in terms of Robinson's strong second-order sufficient condition under the classical linear independence constraint qualification (LICQ). The results of \cite{Mordukhovich2012} were given in the {\em pointbased form}, i.e., they were expressed entirely at the point in question, which is the most important for applications. Pointbased extensions of this characterization and related results for nonpolyhedral problems of conic programming (including second-order cone programs and semidefinite programs) were later established in \cite{MordukhovichNghiaRockafellar2015,MordukhovichOutrataRamirez2015,MordukhovichOutrataSarabi2014} under certain {\em nondegeneracy conditions}, which are conic counterparts of LICQ. 

More delicate characterizations of tilt stability for NLPs were obtained in \cite{MordukhovichNghia2015} without imposing LICQ (and thus the uniqueness of Lagrange multipliers), but in a {\em neighborhood form} by invoking points near the reference one. A broad spectrum of pointbased characterizations of (as well as separate sufficient and necessary conditions for) tilt-stable minimizers in NLPs, involving only the point in question, were established in \cite{GfrererMordukhovich2015} without any nondegeneracy assumptions while under the novel {\em metric subregularity constraint qualification} (MSCQ). The latter qualification condition was further investigated and applied in many publications; see, e.g., \cite{Chieu2018,Chieu2021,Helmut2019,Gfrerer2022,Khanh2023,Mohammadi2021}. 

To the best of our knowledge, the only paper providing pointbased characterizations of tilt-stable minimizers in nonpolyhedral optimization problems without nondegeneracy is  \cite{BenkoGfrererMordukhovich2019}, where rather complicated conditions of this type are obtained for second-order cone programming (SOCP) with a single Lorentz cone under the SOCP counterpart of MSCQ. Up to now, the understanding of tilt stability for local minimizers in nonpolyhedral problems, particularly for structural composite problems of type \eqref{P}, remains limited in the absence of nondegeneracy.\vspace*{0.03in}

The main goal of this paper is to derive pointwise {\em no-gap second-order necessary and sufficient conditions} for tilt stability of local minimizers in a general class of nonpolyhedral problems of composite optimization \eqref{P} under MSCQ and other mild assumptions including {\em parabolic regularity} of $g$. To this end, we introduce a novel {\em second-order variational function} defined via the given data of \eqref{P}. The established generalized differential properties of this function are of their own interest  while eventually leading us to the desired pointbased characterizations of tilt stability in \eqref{P}. Involving advanced matrix analysis and calculations allows us constructively implement the most challenging general assumption in the case of {\em spectral norm function}, which is particularly important for applications to machine learning and data science.

The remainder of this paper is structured as follows. In Section~\ref{sec:prel}, we  
overview some basic constructions of variational analysis and generalized differentiation broadly used in the paper and present necessary preliminaries. Section~\ref{sec:2nd-order} introduces the second-order variational function and develops its generalized differentiable properties. Section~\ref{sec:no-gap} establishes the main results on neighborhood and pointbased characterizations of tilt stability. After the concluding Section~\ref{conc}, which summarizes the major achievements of the paper and discusses further perspectives, we present the technical Appendix~\ref{appendices} containing a detailed verification of the most involved assumption for the highly important case of the spectral norm function.\vspace*{0.05in}

The notation of the paper is standard in variational analysis; see, e.g., \cite{Mordukhovich2018,Rockafellar1998}. For the reader's convenience, recall that 
the closed unit ball in $\mathcal{R}^{m}$ is denoted by $\mathbb{B}_{\mathcal{R}^{m}}$, while $\mathbb{B}(x,\varepsilon):=\{u\in\mathcal{R}^{m}\, |\, \|u-x\|\leq\varepsilon\}$ for fixed $x\in\mathcal{R}^{m}$ and $\varepsilon>0$. The indicator function of $C$ is defined by $\delta_{C}(x):=0$ if $x\in C$ and $\delta_{C}(x):=\infty$ otherwise. Given $x\in\mathcal{R}^{m}$, the symbols $\operatorname{dist}(x,C)$ and $\operatorname{\Pi}_{C}(x)$ signify the distance from $x$ to $C$ and the projection of $x$ onto $C$, respectively. For a sequence $\{x^{k}\}$, the symbol  $x^{k}{\xrightarrow{C}}x$ means that $x^{k}\rightarrow x$ with $x^{k}\in C$ as $k\in\mathbb N:=\{1,2,\ldots\}$. For a nonempty closed convex cone $C$, the polar cone of $C$ is defined by $C^{\circ}:=\{y\,|\, \langle x,y\rangle\leq 0\;\mbox{ as }\;x\in C\}$,  and the affine space of $C$ is $\operatorname{aff}C:=C-C$, which is the smallest subspace containing $C$. The $n\times n$ identity matrix is denoted by $I_{n}$ (with omitting the subscript when the dimension is evident), and $\operatorname{Diag}(x)$ denotes the diagonal matrix with the vector $x$ standing in the diagonal. The symbol $A^{\dagger}$ indicates the Moore-Penrose inverse for a given matrix $A\in\mathcal{R}^{m\times n}$. We write $\Omega_1\subset\Omega_2$ to  indicate that the set $\Omega_1$ is smaller than or equal to $\Omega_2$.  Given a set-valued mapping multifunction $S\colon\mathcal{R}^{n}\tto\mathcal{R}^{m}$, the (Painlev\'{e}-Kuratowski) {\em outer limit} of $S$ at $\ox$ is 
\begin{equation}\label{pk}
\Limsup_{x\to\ox}S(x):=
\big\{y\in\mathcal{R}^{m}\;\big|\;\exists\,x_k\to\ox,\;y_k\to y\;\mbox{ with }\;y_k\in S(x_k)\big\},
\end{equation}
while the standard upper limit of scalars is denoted by `$\limsup$'. 
\vspace*{-0.12in}

\section{Basic Definitions and Preliminaries}\label{sec:prel}\vspace*{-0.05in}

In this section, we overview fundamental concepts of variational analysis and generalized differentiation that are broadly used in what follows; see the books \cite{Mordukhovich2018,Mordukhovich2024,Rockafellar1998} and further references below for more details. Basic assumptions on the data of \eqref{P} are also formulated below.

Given a set $C\subset\mathcal{R}^{m}$, the (Bouligand-Severi) {\em tangent cone} to $C$ at $x\in C$ is defined by
\begin{align}\label{tan}
\mathcal{T}_{C}(x):=\Limsup_{t\downarrow 0}\frac{C-x}{t}=\left\{w\in\mathcal{R}^{m}\,\left|\,\exists\, t^{k}\downarrow 0,\;x^{k}{\xrightarrow{C}} x,\; \frac{x^{k}-x}{t^{k}}\rightarrow w \right.\right\}
\end{align}
in terms of the outer limit \eqref{pk} of $G(t):=t^{-1}(C-x)$ with $t\in\mathcal{R}$. The (Fr\'{e}chet) {\em regular normal cone} to $C$ at $x$ can be defined equivalently by the equalities
\begin{align}\label{rnc}
\widehat{\mathcal{N}}_{C}(x):=\left\{y\in\mathcal{R}^{m}\, \left|\, \limsup_{{x'\xrightarrow{C} x}\atop{x'\neq x}}\frac{\langle y,x'-x\rangle}{\|x'-x\|}\leq 0\right.\right\}=\mathcal{T}_{C}^{\circ}(x),
\end{align}
while the (Mordukhovich) {\em limiting normal cone} to $C$ at $x$ is given via \eqref{pk} as
\begin{align}\label{lnc}
\mathcal{N}_{C}(x):=\Limsup_{x'\xrightarrow{C} x}\widehat{\mathcal{N}}_{C}(x').
\end{align}
There are several equivalent representations of \eqref{lnc} that can be found in the aforementioned references. If $C$ is convex, both regular and the limiting normal cones reduce to the normal cone of convex analysis, while \eqref{lnc} is often {\em nonconvex} 
even for simple sets on the plane; e.g., when $C$ is the graph of the simplest nonsmooth convex function $f(x):=|x|$ at $0\in\mathcal{R}^2$. Nevertheless, the limiting normal cone \eqref{lnc} and the associated generalized differentiation constructions for functions and multifunctions enjoy {\em full calculus}, which is much better than for their convex counterparts as, e.g., \eqref{rnc}. This is based on {\em extremal/variational principles} of variational analysis.

The tangent and normal cones to graphs of set-valued (and hence single-valued) mappings generate the corresponding derivative-like constructions for them, which play a highly important role in variational analysis. For a set-valued mapping $S:\mathcal{R}^{n}\rightrightarrows\mathcal{R}^{m}$, consider its domain $\dom S:=\{x\in\mathcal{R}^n\,|\,S(x)\ne\emp\}$, graph $\gph S:=\{(x,y)\in\mathcal{R}^{n+m}\,|\,y\in S(x)\}$, and range $\operatorname{rge}S:=\{u\in\mathcal{R}^{m}\,|\,\exists\,x\in\mathcal{R}^{n}\, \mbox{with}\, u\in S(x)\}$. The inverse mapping $S^{-1}:\mathcal{R}^{m}\rightrightarrows\mathcal{R}^{n}$ is defined by $S^{-1}(u):=\{x\in\mathcal{R}^{n}\,|\, u\in S(x)\}$. Recall next the constructions of the graphical derivative and coderivatives of multifunctions generated by \eqref{tan}, \eqref{rnc}, and \eqref{lnc}, respectively. 

\begin{definition}\label{der-cod} Let $S:\mathcal{R}^{n}\rightrightarrows\mathcal{R}^{m}$ with $x\in\operatorname{dom}S$. Then the {\sc graphical derivative} of $S$ at $x$ for $u\in S(x)$ is the mapping $DS(x,u):\mathcal{R}^{n}\rightrightarrows\mathcal{R}^{m}$ defined by
\begin{align}\label{gra-der}
DS(x,u)(w):=\left\{z\in\mathcal{R}^{m}\,\left|\,(w,z)\in\mathcal{T}_{\operatorname{gph}S}(x,u)\right.\right\},\quad w\in\mathcal{R}^{n}.
\end{align}
The {\sc regular coderivative} $\widehat{D}^{*}S(x,u):\mathcal{R}^{m}\rightrightarrows\mathcal{R}^{n}$ of $S$ at $x$ for $u\in S(x)$ is defined by
\begin{align}\label{reg-cod}
\widehat{D}^{*}S(x,u)(w):=\left\{
z\in\mathcal{R}^{n}\,\left|\,(z,-w)\in\widehat{\mathcal{N}}_{\operatorname{gph}S}(x,u)\right.\right\},\quad w\in\mathcal{R}^{m}.
\end{align}
The {\sc limiting coderivative} $D^{*}S(x,u):\mathcal{R}^{m}\rightrightarrows\mathcal{R}^{n}$ of $S$ at $x$ for $u\in S(x)$ is the mapping 
\begin{align}\label{lim-cod}
D^{*}S(x,u)(w):=\left\{z\in\mathcal{R}^{n}\,\left|\,(z,-w)\in\mathcal{N}_{\operatorname{gph}S}(x,u)\right.\right\},\quad w\in\mathcal{R}^{m}.
\end{align}
When $S$ is single-valued at $x$, we skip $u=S(x)$ in the notations \eqref{gra-der}--\eqref{lim-cod}.
\end{definition}

If $S$ is single-valued and Fr\'echet differentiable at $x$ with the Jacobian matrix $\nabla S(x)$, then we get $DS(x,u)(w)=\{\nabla S(x)w\}$ and $\widehat D^*S(x,u)(w)=\{\nabla S(x)^Tw\}$, where $A^T$ indicate the matrix transposition. If $S$ is strictly differentiable at $x$ (in particular, ${\cal C}^1$-smooth around this point), then $D^{*}S(x,u)(w)=\{\nabla S(x)^Tw\}$. In  general, all the mappings in \eqref{gra-der}--\eqref{lim-cod} are positively homogeneous in $w$. It is easy to check that relationships
\begin{align}\label{eq:inverse-graphical-derivative}
\begin{aligned}
z\in DS(x,u)(w)\,&\Longleftrightarrow\, w\in D(S^{-1})(u,x)(z),\\ 
z\in D^{*}S(x,u)(w)\,&\Longleftrightarrow\, -w\in D^{*}(S^{-1})(u,x)(-z),
\end{aligned}
\end{align}
and similarly for the regular coderivative of the inverse. For sums of set-valued mappings $S:\mathcal{R}^{n}\rightrightarrows\mathcal{R}^{m}$ and ${\cal C}^1$-smooth mappings $h:\mathcal{R}^{n}\rightarrow\mathcal{R}^{m}$, we get
\begin{align*}
\begin{aligned}
\mathcal{T}_{\operatorname{gph}(h+S)}(x,u)&=\big\{(w,\nabla h(x)w+z)\,\big|\, (w,z)\in\mathcal{T}_{\operatorname{gph} S}(x,u-h(x))\big\},\\
\widehat{\mathcal{N}}_{\operatorname{gph}(h+S)}(x,u)&=\big\{(w-\nabla h(x)^{T}z,z)\,\big|\,(w,z)\in\widehat{\mathcal{N}}_{\operatorname{gph}S}(x,u-h(x))\big\},
\end{aligned}
\end{align*}
which readily yields the equality sum rules
\begin{align}\label{eq:coderivative-sum}
\begin{aligned}
D(h+S)(x,u)(w)&=\nabla h(x)w+DS(x,u-h(x))(w),\\
D^{*}(h+S)(x,u)(z)&=\nabla h(x)^{T}z+D^{*}S(x,u-h(x))(z).
\end{aligned}
\end{align}

When $S\colon\O\to\mathcal{R}^m$ is single-valued and Lipschitz continuous on an open set $\O\subset\mathcal{R}^n$, yet another generalized derivative is used in what follows. By the classical Rademacher's theorem, such a mapping $S$ is differentiable almost everywhere in $\Omega$. Denoting by $\Omega_{S}\subset\O$ be the set of points where $S$ is differentiable, consider the collection of matrices
\begin{align}\label{B-sub}
\partial_{B}S(x):=\left\{V\in\mathcal{R}^{m\times n}\,\left|\,\exists\,  x^{k}\xrightarrow{\Omega_{S}} x,\,\nabla S(x^{k})\rightarrow V\;\mbox{ as }\;k\to\infty\right.\right\},
\end{align}
which is a nonempty compact subset of $\mathcal{R}^{m\times n}$ known as the {\em B-subdifferential} of $S$ at $x$. The convex hull of \eqref{B-sub} is the (Clarke) {\em generalized Jacobian} of $S$ at $x$ denoted by ${\cal J}S(x)$.

To proceed further, recall that a multifunction  $S:\mathcal{R}^{n}\rightrightarrows\mathcal{R}^{m}$ is {\em calm} at $\bar{z}$ for $\bar{w}$ if there exist constants $\kappa\ge 0$ and $\varepsilon,\delta>0$ such that
\begin{equation}\label{calm}
S(z)\cap\mathbb{B}(\bar{w},\delta)\subset S(\bar{z})+\kappa\|z-\bar{z}\|\mathbb{B}_{\mathcal{R}^{m}}\;\mbox{ for all }\;z\in\mathbb{B}(\bar{z},\varepsilon).
\end{equation}
It is said that $S:\mathcal{R}^{n}\rightrightarrows\mathcal{R}^{m}$ is {\em metrically subregular} at $\bar{z}$ for $\bar{w}$ if
\begin{equation}\label{subreg}
\operatorname{dist}(z,S^{-1}(\bar{w}))\leq\kappa\operatorname{dist}(\bar{w},S(z)\cap\mathbb{B}(\bar{w},\delta))
\end{equation}
with the same choice of $z,\kappa,\ve,\dd$  as in \eqref{calm}. As well-recognized in variational analysis, the calmness and metric subregularity properties of mappings are equivalent up to the passing of their inverses. Based on \eqref{subreg}, we now define the aforementioned qualification condition for general constraint systems that is employed below for problem \eqref{P}. 

\begin{definition}\label{def:mscq} Let $G:\mathcal{R}^{n}\rightarrow\mathcal{R}^{m}$ be a single-valued mapping, and let $C$ be a subset of $\mathcal{R}^{m}$. The constraint system $G(x)\in C$ satisfies the the {\sc metric subregularity constraint qualification} $($MSCQ$)$ at a feasible point $x$ if the set-valued mapping $M(x):=G(x)-C$ is metrically subregular at $x$ for the origin, i.e., there exists a modulus $\kappa>0$ along with $\varepsilon>0$ such that for all $x'\in\mathbb{B}(x,\varepsilon)$ we have the estimate
\begin{align}\label{eq:MSCQ}
\operatorname{dist}(x',M^{-1}(0))\le\kappa\,\operatorname{dist}(0,M(x')).
\end{align}
\end{definition}
It is worth emphasizing that the MSCQ from Definition~\ref{def:mscq} is significantly weaker than the standard {\em metric regularity} of  $M(\cdot)$, which corresponds to \eqref{eq:MSCQ} with replacing $\bar{z}=0$ in both parts therein by all $z\in\mathcal{R}^{m}$ with small norm.\vspace*{0.05in}

Next we pass to extended-real-valued functions. Given $f:\mathcal{R}^{m}\rightarrow\overline{\mathcal{R}}$,  consider its domain and epigraph defined, respectively, by
\begin{equation*}
\operatorname{dom}f:=\{x\in\mathcal{R}^{m}\,|\,f(x)<\infty\},\quad \operatorname{epi}f:=\{(x,y)\in\mathcal{R}^{m}\times\mathcal{R}\,|\,x\in\operatorname{dom}f,\,f(x)\leq y\}
\end{equation*}
and associate with $f$ the {\em Fenchel conjugate} 
\begin{align*}
f^{*}(x)&:=\sup_{u\in\operatorname{dom}f}\left\{\langle x,u\rangle-f(u)\right\}.
\end{align*}
We say $f$ is proper if $f(x)>-\infty$ for all $x\in\mathcal{R}^{m}$ and $\operatorname{dom}f\ne\emp$. Fixed $\sigma>0$, the {\em Moreau envelope} $e_{\sigma f}$ and the {\em proximal mapping} $\operatorname{Prox}_{\sigma f}$ corresponding to a proper l.s.c.\ function $f$ are defined, respectively, by the formulas
\begin{align}\label{moreau}
e_{\sigma f}(x)&:=\inf_{u\in\mathcal{R}^{m}}\left\{f(u)+\frac{1}{2\sigma}\|u-x\|^{2}\right\},\\ \label{prox}
\operatorname{Prox}_{\sigma f}(x)&:=\mathop{\operatorname{arg}\min}\limits_{u\in\mathcal{R}^{m}}\left\{f(u)+\frac{1}{2\sigma}\|u-x\|^{2}\right\}.
\end{align}
If $f$ is l.s.c.\ and convex, it is well known from convex analysis \cite{Rockafellar1970} that \eqref{prox} is single-valued and Lipschitz continuous, and that \eqref{moreau} is convex and ${\cal C}^1$-smooth with the gradient
\begin{align*}
\nabla e_{\sigma f}(x)=\frac{1}{\sigma}\big(x-\operatorname{Prox}_{\sigma f}(x)\big)=\operatorname{Prox}_{\sigma^{-1}f^{*}}(\sigma^{-1}x).
\end{align*}
Addressing further subdifferentials of extended-real-valued functions, define them geometrically via the normal cones in \eqref{rnc} and \eqref{lnc}, while observing that they admit various analytic representations; see \cite{Mordukhovich2018,Rockafellar1998}. In this way, the {\em regular subdifferential} and the {\em limiting subdifferential} of $f\colon\mathcal{R}^m\to\overline{\mathcal{R}}$ at $x\in\dom f$ are given, respectively, by
\begin{align*}
\widehat{\partial}f(x)&:=\big\{v\in\mathcal{R}^{m}\,\big|\,(v,-1)\in\widehat{\mathcal{N}}_{\operatorname{epi}f}(x,f(x))\big\},\\ %\label{lim-sub}
\partial f(x)&:=\big\{v\in\mathcal{R}^{m}\,\big|\,(v,-1)\in\mathcal{N}_{\operatorname{epi}f}(x,f(x))\big\}.
\end{align*}
The {\em singular subdifferential} of $f$ at $x$ is defined by \begin{align*}
\partial^{\infty}f(x):=\left\{v\in\mathcal{R}^{m}\,\left|\,\exists \,t_{k}\downarrow0,x^{k}\rightarrow x\;\mbox{with}\; f(x^{k})\rightarrow f(x),\;v^{k}\in\widehat{\partial}f(x^{k}),\;t_{k}v^{k}\rightarrow v\right.\right\}.
\end{align*}
An l.s.c.\ function $f$ is {\em prox-regular} at $\bar{x}\in\operatorname{dom}f$ for $\bar{u}\in\partial f(\bar{x})$ if there are $\varepsilon>0$ and $\rho\geq0$ with
\begin{align*}
f(x')\geq f(x)+\langle u,x'-x\rangle-\frac{\rho}{2}\|x'-x\|^{2}
\end{align*}
for all $x'\in\mathbb{B}(\bar{x},\varepsilon)$ such that $x\in\mathbb{B}(\bar{x},\varepsilon)$, $u\in\partial f(x)\cap\mathbb{B}(\bar{u},\varepsilon)$, and $f(x)<f(\bar{x})+\varepsilon$. If this holds for all $\bar{u}\in\partial f(\bar{x})$, $f$ is said to be prox-regular at $\bar{x}$. We say that $f$ is {\em subdifferentially continuous} at $\bar{x}$ for $\bar{u}\in\partial f(\bar{x})$ if $f(x^{k})\rightarrow f(\bar{x})$ whenever $(x^{k},u^{k})\rightarrow(\bar{x},\bar{u})$ with $u^{k}\in\partial f(x^{k})$. If this holds for all $\bar{u}\in\partial f(\bar{x})$, $f$ is called subdifferentially continuous at $\bar{x}$. Any l.s.c.\ convex function is prox-regular and subdifferentially continuous at every point in its domain.

Consider now generalized directional derivatives of extended-real-valued functions. Given $f\colon\mathcal{R}^m\to\overline{\mathcal{R}}$ and $x\in\operatorname{dom}f$, the (first-order) {\em subderivative} of $f$ at $x$ is defined by
\begin{align}\label{subder}
df(x)(d):=\liminf_{{t\downarrow 0}\atop{d'\rightarrow d}}\frac{f(x+td')-f(x)}{t},\quad d\in\mathcal{R}^m,
\end{align}
where we can put $d'=d$ if $f$ is Lipschitz continuous around $x$. To define the second-order counterpart of \eqref{subder}, form the second-order difference quotient of $f$ at $x\in\operatorname{dom}f$ for some $u$ by
\begin{align*}
\Delta_{\tau}^{2}f(x,u)(d):=\frac{f(x+\tau d)-f(x)-\tau\langle u,d\rangle}{\frac{1}{2}\tau^{2}}\ \ \mbox{ with }\ d\in\mathcal{R}^{m},\ \tau>0
\end{align*}
and say that $d^{2}f(x,u):\mathcal{R}^{m}\rightarrow\overline{\mathcal{R}}$ is the {\em second subderivative} of $f$ at $x$ for $u$ if
\begin{align}\label{2subder}
d^{2}f(x,u)(d):=\liminf_{{\tau\downarrow 0}\atop{d'\rightarrow d}}\Delta_{\tau}^{2}f(x,u)(d'),\quad d\in\mathcal{R}^{m}.
\end{align}
The function $f$ is called {\em twice epi-differentiable} at $x$ for $u$ if $\Delta_{\tau}^{2}f(x,u)$ epi-converges to $d^{2}f(x,u)$ as $\tau\downarrow 0$. To proceed further, recall the notion of {\em parabolic subderivative} for a proper function $f$ at $x\in\operatorname{dom}f$ relative to a vector $d$ for which $df(x)(d)$ is finite. It is defined by
\begin{align}\label{parab-subder}
d^{2}f(x)(d,w):=\liminf_{{t\downarrow 0}\atop{w'\rightarrow w}}\frac{f(x+td+\frac{1}{2}t^{2}w')-f(x)-tdf(x)(d)}{\frac{1}{2}t^{2}},\quad w\in\mathcal{R}^m.
\end{align}
The function $f$ is called {\em parabolically epi-differentiable} at $x$
for $d$ if $\operatorname{dom}d^{2}f(x)(d,\cdot)\neq\emptyset$
and for every $w\in\mathcal{R}^{m}$ and every $t_{k}\downarrow 0$, there exists $w_{k}\rightarrow w$ such that 
\begin{align}\label{parab-epi}
d^{2}f(x)(d,w)=\lim_{k\rightarrow\infty}\frac{f(x+t_{k}d+\frac{1}{2}t_{k}^{2}w_{k})-f(x)-t_{k}df(x)(d)}{\frac{1}{2}t_{k}^{2}}.
\end{align}

Next we define the fundamental second-order regularity concept introduced in \cite{Rockafellar1998} and comprehensively developed in the recent papers  \cite{Mohammadi2020,Mohammadi2021,Mohammadi2022}, which revealed the fulfillment of this property for broad classes of functions appearing in variational analysis and optimization.

\begin{definition} Given $f:\mathcal{R}^{m}\rightarrow \overline{\mathcal{R}}$ with $x\in\operatorname{dom}f$ and $u\in\partial f(x)$, we say that $f$ is {\sc parabolically regular} at $x$ for $u$ in the direction $d$ if there is the connection
\begin{align*}
\inf_{w\in\mathcal{R}^{m}}\left\{d^{2}f(x)(d,w)-\langle u,w\rangle\right\}=d^{2}f(x,u)(d)
\end{align*} 
between the second subderivative \eqref{2subder} and the parabolic subderivative \eqref{parab-subder}.
\end{definition}

Now we are ready to formulate our basic general assumptions, which will be applied below to the initial data of the composite optimization problem \eqref{P}.
 
\begin{assumption}\label{assump-1}
Given an l.s.c.\ proper convex function $f:\mathcal{R}^{m}\rightarrow\overline{\mathcal{R}}$, assume that $f$ is {\sc parabolically regular} at $\ox\in\dom f$ for $\bar{u}\in\partial f(\bar{x})$ and {\sc parabolically epi-differentiable} at $\bar{x}$ for every direction $d\in\mathcal{R}^m$ satisfying the condition $df(\bar{x})(d)=\langle\bar{u},d\rangle$. 
\end{assumption}

\begin{assumption}\label{assump-1-1}
Given an l.s.c.\ proper convex function $f:\mathcal{R}^{m}\rightarrow \overline{\mathcal{R}}$, assume that the subgradient mapping $\partial f$ is {\sc calm} at $\ox\in\dom f$ for $\bar{u}\in\partial f(\bar{x})$.
\end{assumption}

Let us show that both Assumptions~\ref{assump-1} and \ref{assump-1-1} are mild indeed. To this end, recall that a closed convex {\em set} $C\subset\mathcal{R}^{m}$ is {\em ${\cal C}^{2}$-reducible} to some $K\subset\mathcal{R}^{p}$ at $x\in C$ if there exist a neighborhood $U$ of $x$ and a ${\cal C}^2$-smooth  mapping $\Xi:U\rightarrow\mathcal{R}^{p}$ with the surjective derivative such that $C\cap U=\{x\in U\, |\, \Xi(x)\in K\}$ and $\Xi(x)=0$. The reduction is {\em pointed} if the tangent cone $\mathcal{T}_{K}(\Xi(x))$ is pointed. If in addition the set $K-\Xi(x)$ is a pointed closed convex cone, it is said that $C$ is ${\cal C}^2$-cone reducible at $x$. We say that an l.s.c.\  convex {\em function} $f:\mathcal{R}^{m}\to\overline{\mathcal{R}}$ is {\em ${\cal C}^{2}$-cone reducible} at $x\in\dom f$ if its epigraph is ${\cal C}^{2}$-cone reducible at $(x,f(x))$. Moreover, $f$ is said to be simply ${\cal C}^{2}$-cone reducible if it is ${\cal C}^{2}$-cone reducible at every point of the domain. 

It follows from \cite[Theorem~6.2]{Mohammadi2021} that the ${\cal C}^2$-reducibility of the function $f$ yields its parabolic epi-differentiability at $\bar{x}$ for $\bar{u}$ in \eqref{parab-epi} as required by Assumption~\ref{assump-1}, while the parabolic regularity of such $f$ is a consequence of \cite[Propositions~3.103, 3.136]{Bonnans2000}. The subdifferential calmness of ${\cal C}^2$-reducible functions required in Assumption~\ref{assump-1-1} is verified in the following proposition.

\begin{proposition}\label{prop-calmness-C2-cone-reducible-function}
Let $f:\mathcal{R}^{m}\rightarrow\overline{\mathcal{R}}$ be a proper convex ${\cal C}^{2}$-cone reducible function, which is Lipschitz continuous around each point in $\operatorname{dom}f$. Given $\bar{x}\in\operatorname{dom}f$ and $\bar{u}\in\partial f(\bar{x})$, the subgradient mapping $\partial f$ is calm at $\bar{x}$ for $\bar{u}$.
\end{proposition}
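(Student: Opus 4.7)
The plan is to transfer calmness from the normal cone mapping of the reducing cone $K$ back to $\partial f$ via the chain rule induced by the $\mathcal{C}^{2}$-cone reduction of $\operatorname{epi} f$, and then to close the argument with a vertical rescaling that exploits the cone structure of $K$. By hypothesis, there exist a neighborhood $U$ of $\bar{z}:=(\bar{x},f(\bar{x}))$, a $\mathcal{C}^{2}$-smooth mapping $\Xi\colon U\to\mathcal{R}^{p}$ with $\Xi(\bar{z})=0$ and surjective Jacobian $\nabla\Xi(\bar{z})$, and a pointed closed convex cone $K\subset\mathcal{R}^{p}$ such that $\operatorname{epi} f\cap U=\Xi^{-1}(K)$. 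Surjectivity of $\nabla\Xi(\bar{z})$ yields Robinson's constraint qualification, and hence the local chain rule
\begin{align*}
\mathcal{N}_{\operatorname{epi} f}(z)=\nabla\Xi(z)^{T}\mathcal{N}_{K}(\Xi(z))
\end{align*}
for all $z\in\operatorname{epi} f$ sufficiently close to $\bar{z}$; moreover, injectivity of $\nabla\Xi(\bar{z})^{T}$ (the adjoint of a surjection) makes the associated Lagrange multiplier unique.

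Given $u\in\partial f(x)$ with $(x,u)$ close to $(\bar{x},\bar{u})$, the relation $(u,-1)\in\mathcal{N}_{\operatorname{epi} f}(x,f(x))$ produces a unique $\lambda=\lambda(x,u)\in\mathcal{N}_{K}(\Xi(x,f(x)))$ satisfying $(u,-1)=\nabla\Xi(x,f(x))^{T}\lambda$. The crucial structural observation is that for any closed convex cone $K$ and any $z\in K$ one has $\mathcal{N}_{K}(z)\subset\mathcal{N}_{K}(0)=K^{\circ}$; consequently $\lambda\in K^{\circ}$. A uniform lower bound on the singular values of $\nabla\Xi(\cdot)$ near $\bar{z}$, together with the Lipschitz continuity of $f$ (which keeps $(x,f(x))$ inside $U$), shows that $\lambda(x,u)\to\bar{\lambda}$ as $(x,u)\to(\bar{x},\bar{u})$, so $\|\lambda\|$ stays uniformly bounded on a neighborhood of $(\bar{x},\bar{u})$.

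To produce $u'\in\partial f(\bar{x})$ close to $u$, set $(\tilde{u},\tilde{\alpha}):=\nabla\Xi(\bar{z})^{T}\lambda$. The $\mathcal{C}^{2}$-smoothness of $\Xi$ combined with the Lipschitz continuity of $f$ gives
\begin{align*}
\big\|(\tilde{u},\tilde{\alpha})-(u,-1)\big\|=\big\|\bigl(\nabla\Xi(\bar{z})-\nabla\Xi(x,f(x))\bigr)^{T}\lambda\big\|\le L\|x-\bar{x}\|
\end{align*}
for some $L>0$, so $\tilde{\alpha}<0$ and is bounded away from $0$ once $(x,u)$ is sufficiently close to $(\bar{x},\bar{u})$. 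Setting $\lambda':=\lambda/|\tilde{\alpha}|\in K^{\circ}$ and $u':=\tilde{u}/|\tilde{\alpha}|$ yields $(u',-1)=\nabla\Xi(\bar{z})^{T}\lambda'$, whence $u'\in\partial f(\bar{x})$; a short triangle-inequality estimate using $|\tilde{\alpha}+1|\le L\|x-\bar{x}\|$ and boundedness of $\tilde{u}$ then gives $\|u-u'\|\le\kappa\|x-\bar{x}\|$, which is the required calmness.

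The main obstacle is this final vertical rescaling: a priori $\nabla\Xi(\bar{z})^{T}\lambda$ does not lie in $\mathcal{R}^{n}\times\{-1\}$, so one must normalize by $|\tilde{\alpha}|$ and verify both that the normalization preserves $K^{\circ}$-membership (which is automatic from the cone property of $K^{\circ}$) and that the induced first-coordinate error stays $O(\|x-\bar{x}\|)$ after division by $|\tilde{\alpha}|$; this last step is precisely where the smoothness of $\Xi$, the Lipschitz continuity of $f$, and the fact that $\bar{\lambda}$ satisfies $\pi_{n+1}(\nabla\Xi(\bar{z})^{T}\bar{\lambda})=-1\neq 0$ are combined.
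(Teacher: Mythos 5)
Your argument is correct, but it takes a genuinely different route from the paper. The paper does not touch the reduction data at all: it invokes \cite[Theorem~2.1]{LiuSunPan2019} as a black box to get calmness of the epigraphical normal cone mapping $\mathcal{N}_{\operatorname{epi}f}$ at $(\bar{x},f(\bar{x}))$ for $(\bar{u},-1)$, then uses the decomposition of $\mathcal{N}_{\operatorname{epi}f}(\bar{x},f(\bar{x}))$ into the subgradient part $\{\lambda(v,-1)\}$ and the horizon part $\{(v,0)\}$, shrinks the reference ball around $(\bar{u},-1)$ so that it misses the horizon part, and projects the resulting inclusion down to $\partial f$, picking up the factor $1+L_f$ from $|f(x)-f(\bar{x})|\le L_f\|x-\bar{x}\|$. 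You instead rebuild the calmness estimate from scratch out of the reduction $\Xi$: the chain rule under Robinson's constraint qualification together with injectivity of $\nabla\Xi(\cdot)^{T}$ gives a unique, uniformly bounded multiplier $\lambda(x,u)\in\mathcal{N}_{K}(\Xi(x,f(x)))\subset K^{\circ}=\mathcal{N}_{K}(0)$, and transplanting this multiplier to the base point followed by the vertical rescaling by $|\tilde{\alpha}|$ produces an explicit comparison subgradient $u'\in\partial f(\bar{x})$ at distance $O(\|x-\bar{x}\|)$ from $u$. Your key structural steps all check out: $\mathcal{N}_{K}(w)\subset K^{\circ}$ for $w\in K$ because $K$ is a convex cone with vertex $0=\Xi(\bar{x},f(\bar{x}))$; the Lipschitz continuity of $f$ keeps $(x,f(x))$ in $U$ and controls $\|\nabla\Xi(x,f(x))-\nabla\Xi(\bar{x},f(\bar{x}))\|$ via the local Lipschitz continuity of $\nabla\Xi$; and $\tilde{\alpha}$ stays near $-1$, so the rescaling perturbs $\tilde{u}$ by only $O(\|x-\bar{x}\|)$. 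What your approach buys is self-containment (no appeal to the external calmness theorem) and an explicit multiplier-based formula for the nearby subgradient; what it costs is reliance on the full surjectivity of the reduction derivative and the pointedness/vertex normalization $\Xi(\bar{x},f(\bar{x}))=0$, which the paper's cited result packages away. Both proofs use the Lipschitz continuity of $f$ in the same essential way, and both yield the same conclusion.
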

\begin{proof}
Since $\operatorname{epi}f$ is a closed convex set that is ${\cal C}^{2}$-cone reducible at $(\bar{x},f(\bar{x}))$, it follows from \cite[Theorem~2.1]{LiuSunPan2019} that the normal cone mapping $\mathcal{N}_{\operatorname{epi} f}$ is calm at $(\bar{x},f(\bar{x}))$ for each $(u,\gamma)\in\mathcal{N}_{\operatorname{epi} f}(\bar{x},f(\bar{x}))$. Furthermore, we get by \cite[Proposition~1.17]{Mordukhovich2018} that
\begin{align}\label{eq:prop-3-normal-formular}
\mathcal{N}_{\operatorname{epi} f}(\bar{x},f(\bar{x}))=\big\{\lambda(v,-1)\, \big|\, v\in\partial f(\bar{x}),\;\lambda>0\big\}\cup\big\{(v,0)\, \big|\, v\in\partial^{\infty}f(\bar{x})\big\},
\end{align}
which yields $(\bar{u},-1)\in\mathcal{N}_{\operatorname{epi} f}(\bar{x},f(\bar{x}))$ and  therefore implies that the normal cone mapping $\mathcal{N}_{\operatorname{epi} f}$ is calm at $(\bar{x},f(\bar{x}))$ for $(\bar{u},-1)$. The latter tells us that there exist numbers $\kappa,\varepsilon,\delta>0$ such that for all $(x,\alpha)\in\mathbb{B}((\bar{x},f(\bar{x})),\varepsilon)\cap\operatorname{epi}f$, we have the inclusion
\begin{align*}
\mathcal{N}_{\operatorname{epi} f}(x,\alpha)\cap\mathbb{B}((\bar{u},-1),\delta)\subset\mathcal{N}_{\operatorname{epi}f}(\bar{x},f(\bar{x}))+\kappa\|(x,\alpha)-(\bar{x},f(\bar{x}))\|\mathbb{B}_{\mathcal{R}^{m+1}}.
\end{align*}
Moreover, choosing $\varepsilon$ and $\delta$ to be sufficiently small leads us to
\begin{align}\label{eq:prop-3-empty-intersection}
\mathbb{B}((\bar{u},-1),\delta)\cap\big(\big\{(v,0)\, \big|\, v\in\partial^{\infty}f(\bar{x})\big\}+\kappa\|(x,\alpha)-(\bar{x},f(\bar{x}))\|\mathbb{B}_{\mathcal{R}^{m+1}}\big)=\emptyset.
\end{align}
Since $f$ is continuous at $\bar{x}$, there exists $\delta'>0$ such that for any $x\in\mathbb{B}(\bar{x},\delta')\cap\operatorname{dom}f$, we get
$|f(x)-f(\bar{x})|\le\varepsilon/2$. Letting $\varepsilon':=\min\{\frac{\varepsilon}{2},\delta'\}$ and taking any $x\in\mathbb{B}(\bar{x},\varepsilon')$ ensure that $(x,f(x))\in\mathbb{B} ((\bar{x},f(\bar{x})),\varepsilon)\cap\operatorname{epi}f$ and that
\begin{align*}
\mathcal{N}_{\operatorname{epi}f}(x,f(x))\cap\mathbb{B}((\bar{u},-1),\delta)\subset\mathcal{N}_{\operatorname{epi}f}(\bar{x},f(\bar{x}))+\kappa\|(x,f(x))-(\bar{x},f(\bar{x}))\|\mathbb{B}_{\mathcal{R}^{m+1}}.
\end{align*}
Now we combine \eqref{eq:prop-3-normal-formular} and \eqref{eq:prop-3-empty-intersection} to obtain the inclusions
\begin{align*}
&\big\{\lambda(v,-1)\, \big|\, v\in\partial f(x),\lambda>0\big\}\cap\mathbb{B}((\bar{u},-1),\delta)\ \subset\ \mathcal{N}_{\operatorname{epi}f}(x,f(x))\cap\mathbb{B}((\bar{u},-1),\delta)\\
&\qquad\qquad\subset\ \big\{\lambda(v,-1)\, \big|\, v\in\partial f(\bar{x}),\lambda>0\big\}+\kappa\|(x,f(x))-(\bar{x},f(\bar{x}))\|\mathbb{B}_{\mathcal{R}^{m+1}},
\end{align*}
which readily allow us to conclude that
\begin{align*}
\partial f(x)\cap\mathbb{B}(\bar{u},\delta)&\subset\partial f(\bar{x})+\kappa\left(\|x-\bar{x}\|+|f(x)-f(\bar{x})|\right)\mathbb{B}_{\mathcal{R}^{m}}\\&\subset\partial f(\bar{x})+\kappa(1+L_{f})\|x-\bar{x}\|\mathbb{B}_{\mathcal{R}^{m}},\;\mbox{for all}\;x\in\mathbb{B}(\bar{x},\varepsilon'),
\end{align*}
where $L_f$ stands for the Lipschitz constant of $f$ around $\ox$. This shows that the subgradient mapping $\partial f$ is calm at $\bar{x}$ for $\bar{u}$.
\end{proof}

Recall that the class of convex ${\cal C}^{2}$-cone reducible functions is fairly rich including, e.g., the $\ell_{p}$ norm functions (for $p = 1, 2, \infty)$, the nuclear norm function, the spectral norm function, and indicator functions of standard cones in conic programming. This confirms that both Assumptions~\ref{assump-1} and \ref{assump-1-1} are pretty natural and unrestrictive. \vspace*{0.03in}

The next assumption is more involved, but also covers a large territory in optimization theory and applications. In its formulation, we use the $B$-subdifferential \eqref{B-sub} of the Lipschitzian proximal mapping \eqref{prox} and the Moore-Penrose inverses of its elements.

\begin{assumption}\label{assump-2} Given an l.s.c.\ proper convex function $f:\mathcal{R}^{m}\rightarrow \overline{\mathcal{R}}$ with $\bar{x}\in\operatorname{dom}f$ and $\bar{u}\in\partial f(\bar{x})$, assume that there exists $\overline{W}\in\partial_{B}\operatorname{Prox}_{f}(\bar{x}+\bar{u})$ such that
\begin{align*}
\bigcup\limits_{V\in{\cal J}\operatorname{Prox}_{f}(\ox+\ou)}\operatorname{rge}V=\operatorname{rge}\overline{W}
\end{align*}
and for any $y\in\bigcup\limits_{V\in{\cal J}\operatorname{Prox}_{f}(\ox+\ou)}\operatorname{rge}V$ we have $y=\overline{W}\bar{z}$, where $\bar{z}$ satisfies the equalities
\begin{align*}
\min\limits_{{z\in\mathcal{R}^{m},y=Vz}\atop{V\in{\cal J}\operatorname{Prox}_{f}(\ox+\ou)}}\langle y,z-y\rangle=\langle y,\bar{z}-y\rangle=\langle y,(\overline{W}^{\dagger}-I)y\rangle.
\end{align*}
\end{assumption}
Assumption~\ref{assump-2} is a technical requirement, which is needed for our characterizations of tilt-stable minimizers in the general class of composite optimization problems \eqref{P}. This assumption describes the properties that make such characterizations possible. Its verification is rather straightforward if the form of ${\cal J}\operatorname{Prox}_{f}(\bar{x}+\bar{u})$ is known, which is often the case in typical problems of machine learning and statistics; see, e.g., \cite{Khanh2023}. Due to the large size of the paper, we confine ourselves to the detailed verification of Assumption~\ref{assump-2} and explicit calculations of all the data therein for the {\em matrix spectral norm function} whose great importance has been realized in various areas of numerical optimization, data science, and operations research; see, e.g., \cite{ChenLiuSunToh2016} and the references therein. The aforementioned calculation results are presented in Appendix~\ref{appendices}.\vspace*{-0.12in}

\section{The Second-Order Variational Function}\label{sec:2nd-order}\vspace*{-0.05in}

In this section, we introduce and investigate the novel second-order variational function, which allows us to establish pointbased characterizations of tilt stability for local minimizers in the general composite optimization problem \eqref{P} obtained in Section~\ref{sec:no-gap}. We'll see below that this function vanishes when \eqref{P} is a linear program in which case the obtained criteria reduce to those in \cite{GfrererMordukhovich2015}. Thus the second-order variational function can be viewed as a ``measure of nonpolyhedrality".

To begin with, recall that the {\em Lagrangian function} for problem \eqref{P} is defined by
\begin{align*}
\mathcal{L}(x,u):=\inf_{s\in\mathcal{R}^{m}}\Big\{f_{0}(x)+g(F(x)+s)-\langle u,s\rangle\Big\}=L(x,u)-g^{*}(u),
\end{align*}
where $L(x,u)=f_{0}(x)+\langle F(x),u\rangle$. The corresponding {\em Karush-Kuhn-Tucker} (KKT) conditions for \eqref{P} can be written as in the form
\begin{align}\label{eq:kkt-condition}
\nabla_{x}L(x,u)=0,\quad u\in\partial g(F(x)).
\end{align}
Given an extended-real-valued function $f:\mathcal{R}^{m}\rightarrow\overline{\mathcal{R}}$ with $x\in\operatorname{dom}f$ and $u\in\partial f(x)$, define the {\em second-order variational function} (SOVF) associated with $f$ by
\begin{align}\label{sovf}
\Gamma_{f}(x,u)(v):=\left\{\begin{array}{cl}\min\limits_{{d\in\mathcal{R}^{m},v=Vd}\atop{V\in{\cal J} \operatorname{Prox}_{f}(x+u)}}\langle v,d-v\rangle&\mbox{if}\ v\in\bigcup\limits_{V\in{\cal J}\operatorname{Prox}_{f}(x+u)}\operatorname{rge}V,\\
\infty&\mbox{otherwise},
\end{array}\right.
\end{align}
and establish its generalized differential properties, which are of their own interest while being used below for deriving the main results of this paper. We first recall the following relationship that is proved in \cite[Proposition~3.3]{Tangwang2025}.

\begin{proposition}\label{lemma:reformulate-dom-df(x,u)-polar}
Given an l.s.c.\ proper convex function $f:\mathcal{R}^{m}\rightarrow\overline{\mathcal{R}}$, $x\in\operatorname{dom}f$ and $u\in\partial f(x)$. Then we have
\begin{align*}
\big\{d\,\big|\,df(x)(d)=\langle u,d\rangle\big\}^{\circ}=\big\{d\,\big|\,df^{*}(u)(d)=\langle x,d\rangle\big\}\cap\big\{d\,\big|\,d^{2}f^{*}(u,x)(d)=0\big\}.
\end{align*}
\end{proposition}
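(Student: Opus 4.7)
The plan is to view the identity as a convex-duality bridge: the critical cone of $f$ at $(x,u)$ on the left is dual to the first- and second-order behaviour of $f^{*}$ at the paired point $(u,x)$ on the right. The key initial rewriting uses that for convex $f$ one has $df(x)(d)=\sigma_{\partial f(x)}(d)$, so $K:=\{d\,|\,df(x)(d)=\langle u,d\rangle\}$ coincides with $(\partial f(x)-u)^{\circ}$. The bipolar theorem then turns the polar $K^{\circ}$ into $\overline{\operatorname{cone}}(\partial f(x)-u)=\mathcal{T}_{\partial f(x)}(u)$, so the claim reduces to
\begin{equation*}
\mathcal{T}_{\partial f(x)}(u)=\big\{d\,\big|\,df^{*}(u)(d)=\langle x,d\rangle\big\}\cap\big\{d\,\big|\,d^{2}f^{*}(u,x)(d)=0\big\},
\end{equation*}
which I would attack through the Fenchel equivalence $u'\in\partial f(x)\Leftrightarrow\phi(u'):=f^{*}(u')-\langle x,u'\rangle=\min\phi=\phi(u)$.

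For the inclusion ``$\subset$'' I take $d\in\mathcal{T}_{\partial f(x)}(u)$ and select $t_{k}\downarrow 0$, $d_{k}\to d$ with $u+t_{k}d_{k}\in\partial f(x)$. Then $\phi(u+t_{k}d_{k})=\phi(u)$, i.e.\ $f^{*}(u+t_{k}d_{k})-f^{*}(u)-t_{k}\langle x,d_{k}\rangle=0$. Dividing by $t_{k}$ and passing to the limit, combined with the reverse inequality $df^{*}(u)\ge\langle x,\cdot\rangle$ coming from $x\in\partial f^{*}(u)$, yields the first condition $df^{*}(u)(d)=\langle x,d\rangle$. Dividing the same equality by $t_{k}^{2}/2$ gives $d^{2}f^{*}(u,x)(d)\le 0$, which together with the standing inequality $d^{2}f^{*}(u,x)\ge 0$ (convex $f^{*}$ with $x\in\partial f^{*}(u)$) forces $d^{2}f^{*}(u,x)(d)=0$.

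For the reverse inclusion I start from $d$ on the right-hand side and pick $t_{k}\downarrow 0$, $d_{k}\to d$ realising $d^{2}f^{*}(u,x)(d)=0$, which translates into $\phi(u+t_{k}d_{k})-\min\phi=o(t_{k}^{2})$. The decisive step is to upgrade this functional decay into a geometric estimate $\operatorname{dist}(u+t_{k}d_{k},\arg\min\phi)=o(t_{k})$: projecting onto $\arg\min\phi=\partial f(x)$ then produces $u_{k}\in\partial f(x)$ with $(u_{k}-u)/t_{k}\to d$, i.e.\ $d\in\mathcal{T}_{\partial f(x)}(u)$. This quadratic error bound for the convex function $\phi$ around its minimiser set is the main obstacle. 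The approach I would pursue is to exploit parabolic epi-differentiability and parabolic regularity of $f^{*}$: unfold a minimising parabolic arc in direction $d$, use it as a corrector for $u+t_{k}d_{k}$, and verify that the corrected points lie in $\arg\min\phi$ up to an $o(t_{k})$ remainder, which closes the argument.
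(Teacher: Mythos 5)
First, note that the paper does not actually prove this proposition: it is imported verbatim from \cite{Tangwang2025}, so there is no internal argument to compare yours against. Your reduction is correct and clean: for a proper l.s.c.\ convex $f$ with $u\in\partial f(x)$ one has $df(x)=\sigma_{\partial f(x)}$, hence $\{d\,|\,df(x)(d)=\langle u,d\rangle\}=(\partial f(x)-u)^{\circ}=\mathcal{N}_{\partial f(x)}(u)$, and the bipolar theorem identifies its polar with $\mathcal{T}_{\partial f(x)}(u)$. Your proof of the inclusion $\mathcal{T}_{\partial f(x)}(u)\subset\{d\,|\,df^{*}(u)(d)=\langle x,d\rangle\}\cap\{d\,|\,d^{2}f^{*}(u,x)(d)=0\}$ is complete: the identity $\phi(u+t_{k}d_{k})=\min\phi$ together with $x\in\partial f^{*}(u)$ delivers both equalities.

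The reverse inclusion, however, is genuinely not proved, and the step you flag as the main obstacle cannot be overcome under the stated hypotheses. The upgrade from $\phi(u+t_{k}d_{k})-\min\phi=o(t_{k}^{2})$ to $\operatorname{dist}(u+t_{k}d_{k},\arg\min\phi)=o(t_{k})$ is a quadratic growth property that fails for general convex functions. Concretely, take $f(y)=\tfrac{3}{4}|y|^{4/3}$ on $\mathcal{R}$ with $x=0$, $u=0$, so that $f^{*}(w)=\tfrac{1}{4}w^{4}$ and $\phi=f^{*}$: then $df^{*}(0)(d)=0$ and $d^{2}f^{*}(0,0)(d)=0$ for every $d$, so the right-hand side is all of $\mathcal{R}$, whereas $\partial f(0)=\{0\}$ gives $\mathcal{T}_{\partial f(0)}(0)=\{0\}$. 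This shows both that your intended distance estimate is false in general and that the identity itself requires more than lower semicontinuity and convexity; the source result in \cite{Tangwang2025} is established under additional assumptions, and the paper's remark immediately after the proposition, which invokes Assumption~\ref{assump-1}, points the same way. Appealing to parabolic regularity and parabolic epi-differentiability of $f^{*}$ is therefore not an optional refinement but an extra hypothesis you would be importing, and even granting it your sketch stops exactly where the work begins: producing points of $\arg\min\phi=\partial f(x)$ at distance $o(t_{k})$ from $u+t_{k}d_{k}$ out of an $o(t_{k}^{2})$ value decay still requires a metric argument (an error bound for $\phi$, or a computation of $\mathcal{T}_{\partial f(x)}(u)=\mathcal{T}_{\partial f^{*-1}(\cdot)\ni x}(u)$ via the second subderivative of $f^{*}$), none of which is supplied.
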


Observe that if $f:=\delta_{C}$  and Assumption~\ref{assump-1} is satisfied at $x$ for $u$, then Proposition~\ref{lemma:reformulate-dom-df(x,u)-polar} reduces to \cite[Corollary~3.1]{MohammadBoris2020}.\vspace*{0.05in}

The next proposition provides a detailed description of the {\em SOVF domain} in \eqref{sovf}. 

\begin{proposition}\label{proposition-domain-Gamma-f} Given an extended-real-valued function $f:\mathcal{R}^{m}\rightarrow\overline{\mathcal{R}}$, suppose that $f$ satisfies Assumption~{\rm\ref{assump-1}} at some $x\in\operatorname{dom}f$ for $u\in\partial f(x)$. Then $f$ is properly twice epi-differentiable at $x$ for $u$ with $\operatorname{dom}d^{2}f(x,u)=\{d\,|\,df(x)(d)=\langle u,d\rangle\}$. Moreover, for any $v\in\operatorname{dom}d^{2}f(x,u)$ and $z\in D(\partial f)(x,u)(v)$, we have the equalities
\begin{align}\label{eq:d2f-d2f*-value}
d^{2}f(x,u)(v)=d^{2}f^{*}(u,x)(z)=\langle v,z\rangle.
\end{align}
If in addition $f$ satisfies Assumption~{\rm\ref{assump-2}} at $x$ for $u$, then the domain of SOVF is described by
\begin{align}\label{domain-gamma}
\operatorname{dom}\Gamma_{f}(x,u)=\operatorname{aff}\big\{d\,\big|\,df(x)(d)=\langle u,d\rangle\big\}=\operatorname{dom}D^{*}(\partial f)(x,u).
\end{align}
\end{proposition}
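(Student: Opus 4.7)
The plan is to establish the three assertions sequentially, leaning on standard second-order calculus for parabolically regular convex functions to handle the first two, and then exploiting the explicit B-subdifferential structure provided by Assumption~\ref{assump-2} to handle the domain identity.

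For proper twice epi-differentiability and the domain of $d^{2}f(x,u)$, I would invoke Assumption~\ref{assump-1} and apply the classical Moreau-type second-order representation theorem (as in \cite[Theorem~13.66]{Rockafellar1998}/\cite[Proposition~3.103, Theorem~3.109]{Bonnans2000}): parabolic regularity of $f$ at $x$ for $u$ combined with parabolic epi-differentiability at $x$ in every direction $d$ with $df(x)(d)=\langle u,d\rangle$ yields
\begin{align*}
d^{2}f(x,u)(d)=\inf_{w\in\mathcal{R}^{m}}\big\{d^{2}f(x)(d,w)-\langle u,w\rangle\big\}
\end{align*}
when $df(x)(d)=\langle u,d\rangle$, and $+\infty$ otherwise; the domain characterization is then immediate. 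For the triple equality \eqref{eq:d2f-d2f*-value}, twice epi-differentiability yields proto-differentiability of $\partial f$ at $x$ for $u$, and by a classical theorem of Rockafellar the graphical derivative $D(\partial f)(x,u)$ coincides with $\partial\bigl(\tfrac{1}{2}d^{2}f(x,u)\bigr)$. Since $d^{2}f(x,u)$ is convex and positively homogeneous of degree two under parabolic regularity, Euler's identity for subgradients of $2$-homogeneous convex functions forces $d^{2}f(x,u)(v)=\langle z,v\rangle$ for any $z\in D(\partial f)(x,u)(v)$. The inverse formula \eqref{eq:inverse-graphical-derivative} combined with $\partial f^{*}=(\partial f)^{-1}$ gives $v\in D(\partial f^{*})(u,x)(z)$, and applying the same Euler identity to $f^{*}$ (whose parabolic properties are inherited via conjugacy) yields $d^{2}f^{*}(u,x)(z)=\langle z,v\rangle$, closing the chain.

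The third claim is the delicate one. By the definition \eqref{sovf} and Assumption~\ref{assump-2}, $\operatorname{dom}\Gamma_{f}(x,u)=\bigcup_{V\in\mathcal{J}\operatorname{Prox}_{f}(x+u)}\operatorname{rge}V=\operatorname{rge}\overline{W}$, which is a linear subspace. To identify this with $\operatorname{aff}\{d\mid df(x)(d)=\langle u,d\rangle\}$, I would exploit that each $V\in\partial_{B}\operatorname{Prox}_{f}(x+u)$ is a limit of Jacobians $\nabla\operatorname{Prox}_{f}(y^{k})$ at differentiability points $y^{k}\to x+u$; at every such point the Jacobian is symmetric, positive semidefinite with spectrum in $[0,1]$, and under the parabolic/$\mathcal{C}^{2}$-reducible structure equals the orthogonal projection onto the affine hull of the critical cone at $(\operatorname{Prox}_{f}(y^{k}),y^{k}-\operatorname{Prox}_{f}(y^{k}))$; outer semicontinuity of the critical-cone mapping yields the inclusion $\operatorname{rge}\overline{W}\subseteq\operatorname{aff}K(x,u)$, while the reverse inclusion uses Proposition~\ref{lemma:reformulate-dom-df(x,u)-polar} together with the explicit minimization formula in Assumption~\ref{assump-2} to realize any critical direction as $\overline{W}\bar z$. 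Finally, for $\operatorname{aff}K(x,u)=\operatorname{dom}D^{*}(\partial f)(x,u)$, apply the sum rule \eqref{eq:coderivative-sum} to the decomposition $\partial f=(\operatorname{Prox}_{f})^{-1}-I$ and the inverse relation \eqref{eq:inverse-graphical-derivative} to derive $\operatorname{dom}D^{*}(\partial f)(x,u)=\operatorname{rge}D^{*}(\operatorname{Prox}_{f})(x+u)$; the Lipschitzian coderivative bound through $\mathcal{J}\operatorname{Prox}_{f}(x+u)$, together with the symmetry of these Jacobians for convex $f$, equates the right-hand side with $\bigcup_{V}\operatorname{rge}V=\operatorname{rge}\overline{W}$.

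The main obstacle I expect is the identification $\operatorname{rge}\overline{W}=\operatorname{aff}K(x,u)$: tracking the affine hull of the critical cone along sequences of differentiability points of $\operatorname{Prox}_{f}$ and matching it with the explicit minimization description in Assumption~\ref{assump-2} requires a careful interplay of outer semicontinuity, $\mathcal{C}^{2}$-reducibility, and the duality between primal and dual second-order objects, whereas the other inclusions reduce to bookkeeping with the sum rule and the inverse formula for coderivatives.
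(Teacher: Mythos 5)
Your first two assertions track the paper's own argument closely: the paper obtains proper twice epi-differentiability and the domain formula from parabolic regularity plus parabolic epi-differentiability (citing \cite[Theorem~3.8]{Mohammadi2020}), gets $d^{2}f(x,u)(v)=\langle v,z\rangle$ from \cite[Lemma~3.6]{Chieu2021}, and derives the conjugate equality from the identity $\tfrac12 d^{2}f(x,u)(v)+\tfrac12 d^{2}f^{*}(u,x)(z)=\langle v,z\rangle$ via \cite[Theorem~13.21]{Rockafellar1998} and \cite[Theorem~23.5]{Rockafellar1970}, rather than by transferring ``parabolic properties'' to $f^{*}$. Your route there is workable, but what you actually need is twice epi-differentiability of $f^{*}$ at $u$ for $x$, which follows from the conjugacy theorem and should be invoked explicitly instead of being asserted as inherited.

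The genuine gap is in the third part. You claim that at differentiability points $y^{k}\to x+u$ the Jacobian $\nabla\operatorname{Prox}_{f}(y^{k})$ ``equals the orthogonal projection onto the affine hull of the critical cone.'' That is false for precisely the nonpolyhedral functions this paper targets: for the indicator of the second-order cone or for the spectral norm, $\nabla\operatorname{Prox}_{f}(y)$ at a generic differentiability point is symmetric positive semidefinite with eigenvalues strictly between $0$ and $1$, hence not idempotent and not a projection, so the outer-semicontinuity argument you build on this does not deliver $\operatorname{rge}\overline{W}=\operatorname{aff}\{d\,|\,df(x)(d)=\langle u,d\rangle\}$. The paper's proof of \eqref{domain-gamma} proceeds differently: it first shows $\operatorname{rge}D\operatorname{Prox}_{f}(x+u)\subset-\operatorname{rge}D^{*}\operatorname{Prox}_{f}(x+u)$ by pushing the inclusion $D(\partial f)(x,u)\subset D^{*}(\partial f)(x,u)$ (valid for prox-regular, subdifferentially continuous, twice epi-differentiable $f$ by \cite[Theorem~13.57]{Rockafellar1998}) through the sum rule \eqref{eq:coderivative-sum} and the inverse relations \eqref{eq:inverse-graphical-derivative}; it then invokes Assumption~\ref{assump-2} together with \cite[Lemma~4.2]{Tangwang2025} to obtain $\operatorname{aff}\operatorname{rge}D\operatorname{Prox}_{f}(x+u)=\operatorname{rge}D^{*}\operatorname{Prox}_{f}(x+u)$, and finally sandwiches $\operatorname{rge}\overline{W}\subset\operatorname{rge}D^{*}\operatorname{Prox}_{f}(x+u)\subset\bigcup_{V}\operatorname{rge}V=\operatorname{rge}\overline{W}$ using \cite[Proposition~2.4]{Gfrerer2022} and \cite[Corollary~13.53]{Rockafellar1998}, concluding via \cite[Lemma~5.2]{MordukhovichTangWang2025}. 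Note also that your final ``equates'' step only yields the inclusion $\operatorname{rge}D^{*}\operatorname{Prox}_{f}(x+u)\subset\bigcup_{V}\operatorname{rge}V$ from the Lipschitzian coderivative bound; the reverse inclusion requires knowing that $\overline{W}^{T}w\in D^{*}\operatorname{Prox}_{f}(x+u)(w)$ for $\overline{W}\in\partial_{B}\operatorname{Prox}_{f}(x+u)$, which is exactly what the Gfrerer reference supplies and which your sketch omits.
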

\begin{proof} By Assumption~\ref{assump-1}, we deduce from \cite[Theorem~3.8]{Mohammadi2020} that the function $f$ is properly twice epi-differentiable at $x$ for $u$ with the domain $\operatorname{dom}d^{2}f(x,u)$ defined as $\left\{d\, \left|\, df(x)(d)=\langle u,d\rangle\right.\right\}$. Consequently, \cite[Lemma~3.6]{Chieu2021} yields the equality $d^{2}f(x,u)(v)=\langle v,z\rangle$. Employing  \cite[Theorem~13.21]{Rockafellar1998} and \cite[Theorem~23.5]{Rockafellar1970} leads us to
\begin{align*}
\frac{1}{2}d^{2}f(x,u)(v)+\frac{1}{2}d^{2}f^{*}(u,x)(z)=\langle v,z\rangle,
\end{align*}
which completes the verification of \eqref{eq:d2f-d2f*-value}.

We know by the convexity of the l.s.c.\ function $f$ that it is prox-regular and subdifferentially continuous at every point in $\operatorname{dom}f$. Taking into account the twice epi-differentiability of $f$ and using \eqref{eq:inverse-graphical-derivative}, \eqref{eq:coderivative-sum}, Assumption~\ref{assump-1} together with \cite[Theorem~13.57]{Rockafellar1998} allows us 
to conclude that
\begin{align*}
w\in z+D(\partial f)(x,u)(z)\subset z+D^{*}(\partial f)(x,u)(z)=D^{*}(I+\partial f)(x,x+u)(z)		
\end{align*}
for all $w\in\mathcal{R}^{m}$ and $z\in D\operatorname{Prox}_{f}(x+u)(w)$. Consequently, we have
$-z\in D^{*}\operatorname{Prox}_{f}(x+u)(-w)$,
which implies in turn that 
\begin{align*}
\operatorname{rge}D\operatorname{Prox}_{f}(x+u)\subset-\operatorname{rge}D^{*}\operatorname{Prox}_{f}(x+u).
\end{align*}
Invoking now Assumption~\ref{assump-2} and \cite[Lemma~4.2]{Tangwang2025} tells us that  
\begin{align*}
\operatorname{aff}\operatorname{dom}D(\partial f)(x,u)=\operatorname{aff}\operatorname{rge}D\operatorname{Prox}_{f}(x+u)=\operatorname{rge}D^{*}\operatorname{Prox}_{f}(x+u).
\end{align*}
Combining the latter with  \cite[Proposition~2.4]{Gfrerer2022}, and \cite[Corollary~13.53]{Rockafellar1998} yields
\begin{align*}
\begin{aligned}
&\operatorname{rge}\overline{W}\subset\operatorname{rge}D^{*}\operatorname{Prox}_{f}(x+u)\subset \bigcup\limits_{V\in{\cal J}\operatorname{Prox}_{f}(x+u)}\operatorname{rge}V=\operatorname{rge}\overline{W},
\end{aligned}
\end{align*}
which justifies \eqref{domain-gamma} by \cite[Lemma~5.2]{MordukhovichTangWang2025} and thus completes the proof of the proposition.
\end{proof}

Under the assumptions of the proposition above, we get the alternative formulation of the SOVF $\Gamma_{f}(x,u)$ as follows, which greatly simplifies its calculation and applications.

\begin{proposition}\label{proposition-reformulate-Gamma-f} Let $f$ satisfy all the assumptions of Proposition~{\rm\ref{proposition-domain-Gamma-f}}.
Then for any $v\in\operatorname{dom}\Gamma_{f}(x,u)$, we have the representations
\begin{align}\label{sovf-rep}
\Gamma_{f}(x,u)(-v)=\Gamma_{f}(x,u)(v),\quad
\Gamma_{f}(x,u)(v)=\min_{z\in D^{*}(\partial f)(x,u)(v)}\langle v,z\rangle.
\end{align}
\end{proposition}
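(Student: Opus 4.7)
The whole proof rides on the explicit form of $\Gamma_{f}(x,u)$ supplied by Assumption~\ref{assump-2}: on its domain $\operatorname{dom}\Gamma_{f}(x,u)=\operatorname{rge}\overline{W}$ the function reduces to the quadratic form $\Gamma_{f}(x,u)(v)=\langle v,(\overline{W}^{\dagger}-I)v\rangle$. I would start by noting that Proposition~\ref{proposition-domain-Gamma-f} identifies this domain with the affine hull of a convex cone containing the origin, hence with a linear subspace. Consequently $-v$ also lies in the domain whenever $v$ does, and the quadratic-form expression gives
\begin{align*}
\Gamma_{f}(x,u)(-v)=\langle -v,(\overline{W}^{\dagger}-I)(-v)\rangle=\langle v,(\overline{W}^{\dagger}-I)v\rangle=\Gamma_{f}(x,u)(v),
\end{align*}
which settles the first identity.

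For the second identity I would prove two opposite inequalities. The easier bound $\min_{z\in D^{*}(\partial f)(x,u)(v)}\langle v,z\rangle\le\Gamma_{f}(x,u)(v)$ is produced by a concrete candidate: since $\overline{W}\in\partial_{B}\operatorname{Prox}_{f}(x+u)$ is the limit of classical gradients $\nabla\operatorname{Prox}_{f}(y^{k})$, the tangent vectors $(d,\nabla\operatorname{Prox}_{f}(y^{k})d)$ pass to the limit to give $\overline{W}d\in D\operatorname{Prox}_{f}(x+u)(d)$. Taking $\bar d=\overline{W}^{\dagger}v$ so that $v=\overline{W}\bar d$ and invoking the chain already carried out in the proof of Proposition~\ref{proposition-domain-Gamma-f}, the vector $\bar d-v$ lies in $D^{*}(\partial f)(x,u)(v)$ and realizes the value $\langle v,(\overline{W}^{\dagger}-I)v\rangle=\Gamma_{f}(x,u)(v)$ in the coderivative minimization.

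The reverse inequality is where the technical work concentrates. Given an arbitrary $z\in D^{*}(\partial f)(x,u)(v)$, the sum rule \eqref{eq:coderivative-sum} applied to $I+\partial f$ yields $v+z\in D^{*}(I+\partial f)(x,x+u)(v)$, and the inverse relation \eqref{eq:inverse-graphical-derivative} combined with $\operatorname{Prox}_{f}=(I+\partial f)^{-1}$ then gives $-v\in D^{*}\operatorname{Prox}_{f}(x+u)(-(v+z))$. I would next invoke the Clarke--Mordukhovich upper estimate $D^{*}\operatorname{Prox}_{f}(y)(w)\subset\{V^{T}w\,|\,V\in{\cal J}\operatorname{Prox}_{f}(y)\}$, in the same spirit the proof of Proposition~\ref{proposition-domain-Gamma-f} uses \cite[Corollary~13.53]{Rockafellar1998}, to produce $V\in{\cal J}\operatorname{Prox}_{f}(x+u)$ with $v=V^{T}(v+z)$. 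Because $\operatorname{Prox}_{f}$ of a convex function is firmly nonexpansive and equals $I-\nabla e_{f}$ for the convex ${\cal C}^{1,1}$ Moreau envelope $e_{f}$, every classical Jacobian $\nabla\operatorname{Prox}_{f}(y^{k})=I-\nabla^{2}e_{f}(y^{k})$ is symmetric, and this symmetry persists in $\partial_{B}\operatorname{Prox}_{f}$ and in its convex hull ${\cal J}\operatorname{Prox}_{f}$. Hence $V=V^{T}$ and $v=V(v+z)$; setting $d:=v+z$ exhibits a feasible pair in the minimum defining $\Gamma_{f}(x,u)(v)$ with objective value $\langle v,d-v\rangle=\langle v,z\rangle$, yielding $\Gamma_{f}(x,u)(v)\le\langle v,z\rangle$ and, after minimizing over $z$, the desired opposite inequality.

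The chief obstacle is this reverse inequality, which requires reverse-engineering a Jacobian representation $v=Vd$ from an abstract coderivative element; this hinges simultaneously on the Clarke-type coderivative-to-Jacobian upper estimate and on the symmetry of elements of ${\cal J}\operatorname{Prox}_{f}(x+u)$ coming from the firm nonexpansiveness of proximal mappings of convex functions. Once these two facts are combined, the reparameterization $d=v+z$ matches the feasible sets of the two minimization problems and equality of the minima follows.
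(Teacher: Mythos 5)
Your overall architecture matches the paper's: the quadratic form $\langle v,(\overline{W}^{\dagger}-I)v\rangle$ supplied by Assumption~\ref{assump-2}, the symmetry of every element of ${\cal J}\operatorname{Prox}_{f}(x+u)$ (the proximal mapping of a convex function is the gradient of a convex ${\cal C}^{1,1}$ function, so its Jacobians are symmetric), and the Clarke-type upper estimate for $D^{*}\operatorname{Prox}_{f}$ are exactly the ingredients the paper combines. Your treatment of the inequality $\Gamma_{f}(x,u)(v)\le\langle v,z\rangle$ is sound and corresponds to the paper's use of \cite[Corollary~13.53]{Rockafellar1998}. The first identity is also fine, though the paper gets it even more directly from \eqref{sovf}: $v=Vd$ iff $-v=V(-d)$, with the same objective value.

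There is, however, a genuine gap in your argument for the opposite inequality. You justify $\overline{W}\bar d\in D\operatorname{Prox}_{f}(x+u)(\bar d)$ by letting the tangent vectors $(d,\nabla\operatorname{Prox}_{f}(y^{k})d)$ ``pass to the limit.'' Tangent cones do not behave this way: the outer limit of $\mathcal{T}_{\operatorname{gph}S}$ at nearby points contains, but is not contained in, $\mathcal{T}_{\operatorname{gph}S}$ at the limit point, so limits of tangent vectors at $y^{k}$ need not be tangent at $x+u$. Concretely, for $f=\delta_{[0,\infty)}$ one has $\operatorname{Prox}_{f}=\max(\cdot,0)$ and $\overline{W}=1\in\partial_{B}\operatorname{Prox}_{f}(0)$, yet $\overline{W}\cdot(-1)=-1\notin D\operatorname{Prox}_{f}(0)(-1)=\{0\}$; so the inclusion you assert fails even under Assumptions~\ref{assump-1} and \ref{assump-2}. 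The correct route---the one the paper takes via \cite[Proposition~2.4]{Gfrerer2022}---goes through normal cones rather than tangent cones: at differentiability points $(\nabla\operatorname{Prox}_{f}(y^{k})^{T}w,-w)$ is a regular normal to $\operatorname{gph}\operatorname{Prox}_{f}$, and regular normals at nearby points do pass to the limiting normal cone by its very definition, yielding $\overline{W}^{T}w\in D^{*}\operatorname{Prox}_{f}(x+u)(w)$. Combining this with $\overline{W}^{T}=\overline{W}$ and tracking the signs through \eqref{eq:inverse-graphical-derivative}--\eqref{eq:coderivative-sum} (which naturally lands you in $D^{*}(\partial f)(x,u)(-v)$ rather than $D^{*}(\partial f)(x,u)(v)$---this is precisely why the paper proves the bound for $-v$ and then invokes the already established identity $\Gamma_{f}(x,u)(-v)=\Gamma_{f}(x,u)(v)$) repairs the step; Assumption~\ref{assump-2} then guarantees that the resulting value $\langle v,(\overline{W}^{\dagger}-I)v\rangle$ is indeed the minimum defining $\Gamma_{f}(x,u)(v)$.
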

\begin{proof} The first formula in \eqref{sovf-rep} follows immediately from definition \eqref{sovf}. To verify the second one, we deduce from 
Proposition~\ref{proposition-domain-Gamma-f} that
\begin{align*}
\operatorname{dom}\Gamma_{f}(x,u)=\operatorname{dom}D^{*}(\partial f)(x,u)=\operatorname{rge}D^{*}\operatorname{Prox}_{f}(x+u).
\end{align*}
Picking any $v\in\operatorname{dom}\Gamma_{f}(x,u)$ and $z\in D^{*}(\partial f)(x,u)(v)$ gives us by \eqref{eq:inverse-graphical-derivative} and \eqref{eq:coderivative-sum} that
\begin{align*}
\langle v,z\rangle=\langle -v,w-(-v)\rangle\;\mbox{ with }\;w=-v-z,\,-v\in D^{*}\operatorname{Prox}_{f}(x+u)(w),
\end{align*}
which yields therefore the equality
\begin{align*}
\min_{z\in D^{*}(\partial f)(x,u)(v)}\langle v,z\rangle=\min_{-v\in D^{*}\operatorname{Prox}_{f}(x+u)(w)}\langle -v,w-(-v)\rangle.
\end{align*}
On one hand, it follows from  \cite[Corollary~13.53]{Rockafellar1998} the lower estimate
\begin{align*}
\Gamma_{f}(x,u)(-v)\leq\min_{-v\in D^{*}\operatorname{Prox}_{f}(x+u)(w)}\langle -v,w-(-v)\rangle.
\end{align*}
On the other hand,  \cite[Proposition~2.4]{Gfrerer2022} and Assumption~\ref{assump-2} tell us that
\begin{align*}
\min_{-v\in D^{*}\operatorname{Prox}_{f}(x+u)(w)}\langle -v,w-(-v)\rangle\leq\langle -v,d-(-v)\rangle=\Gamma_{f}(x,u)(-v),\quad\mbox{with}\ -v=\overline{W}d,
\end{align*}
which thus completes the proof of the proposition.
\end{proof}

Observe that under Assumptions~\ref{assump-1} and \ref{assump-2}, the function $\Gamma_{f}(x,u)$ is {\em generalized quadratic} with its domain being a {\em linear subspace}. This holds, e.g., when $f$ is a locally Lipschitz continuous and {\em 
${\cal C}^{2}$-cone reducible} convex function. Particular examples include the nuclear norm, the spectral norm, and the indicator functions of standard cones in conic programming such as the positive orthant cone, the second-order cone, and the positive semidefinite cone.\vspace*{0.05in}

Next we derive the following relationship between the ranges of the regular coderivative and the graphical derivative for the proximal mapping associated with a given function.

\begin{lemma}\label{lemma:subseq-range-regular-coderivative}
Let $f$ be an extended-real-valued function with $x\in\operatorname{dom}f$ and $u\in\partial f(x)$. Suppose that $f$ satisfies Assumption~{\rm\ref{assump-1}} at $x$ for $u$. Then we have the inclusion
\begin{align*}
\operatorname{rge}\widehat{D}^{*}\operatorname{Prox}_{f}(x+u)\subset\operatorname{rge}D\operatorname{Prox}_{f}(x+u).
\end{align*}
\end{lemma}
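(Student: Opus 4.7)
The plan is to translate the range inclusion into an equivalent statement about $\partial f$ and then exploit the second-subderivative structure supplied by Assumption~\ref{assump-1}.

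\textbf{Step 1 (reduction via sum and inverse rules).} Set $p:=x+u$ and $P:=\operatorname{Prox}_f$, so that $P(p)=x$ and $P=(I+\partial f)^{-1}$. Combining \eqref{eq:inverse-graphical-derivative} with the sum rules \eqref{eq:coderivative-sum} (via the swap property of graphs under inversion) yields
\[
z\in DP(p)(w) \iff w-z\in D(\partial f)(x,u)(z),\qquad z\in \widehat D^{*}P(p)(w)\iff z-w\in \widehat D^{*}(\partial f)(x,u)(-z),
\]
from which
\[
\operatorname{rge}DP(p) = \operatorname{dom}D(\partial f)(x,u),\qquad \operatorname{rge}\widehat D^{*}P(p) = -\operatorname{dom}\widehat D^{*}(\partial f)(x,u).
\]
Thus the lemma reduces to the containment $-\operatorname{dom}\widehat D^{*}(\partial f)(x,u)\subset \operatorname{dom}D(\partial f)(x,u)$.

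\textbf{Step 2 (identifying the tangent cone to $\gph\partial f$).} Under Assumption~\ref{assump-1}, \cite[Theorem~3.8]{Mohammadi2020} gives proper twice epi-differentiability of $f$ at $x$ for $u$, and convexity of $f$ makes $q:=\tfrac12 d^{2}f(x,u)$ a proper l.s.c.\ convex function, positively homogeneous of degree $2$, with $q\ge 0$ and $\operatorname{dom} q = \{d\mid df(x)(d) = \langle u,d\rangle\}$. The twice epi-differentiability combined with an Attouch-type argument (as in the proof of Proposition~\ref{proposition-domain-Gamma-f}) shows that $T_{\gph \partial f}(x,u) = \gph\partial q$ and $D(\partial f)(x,u) = \partial q$, hence $\widehat N_{\gph \partial f}(x,u) = (\gph\partial q)^{\circ}$ and $\operatorname{dom}D(\partial f)(x,u) = \operatorname{dom}\partial q$.

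\textbf{Step 3 (polarity).} Fix $\eta\in -\operatorname{dom}\widehat D^{*}(\partial f)(x,u)$; pick $w$ with $(w,\eta)\in (\gph\partial q)^{\circ}$, so $\langle w,v\rangle + \langle \eta,y\rangle\le 0$ for every $(v,y)\in\gph\partial q$. Specialising to $v=0$ and taking the supremum over $y\in\partial q(0)$ gives $\sigma_{\partial q(0)}(\eta)\le 0$. A short Fenchel computation using $q\ge 0$ and positive $2$-homogeneity reveals that $\partial q(0) = \{y\mid q^{*}(y) = 0\} = (\operatorname{dom} q)^{\circ}$. The bipolar theorem applied to the closed convex cone $\operatorname{dom} q$ then yields $\eta\in ((\operatorname{dom} q)^{\circ})^{\circ} = \operatorname{dom} q$. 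The parabolic regularity built into Assumption~\ref{assump-1} forces the infimum in $d^{2}f(x,u)(d) = \inf_w\{d^{2}f(x)(d,w) - \langle u,w\rangle\}$ to be attained on the critical cone, producing a subgradient of $q$ at each point of $\operatorname{dom} q$, so $\operatorname{dom}\partial q = \operatorname{dom} q$. Therefore $\eta\in \operatorname{dom}\partial q = \operatorname{dom}D(\partial f)(x,u)$, which finishes the proof.

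\textbf{Main obstacle.} The subtlest ingredient is the identification $\operatorname{dom} q = \operatorname{dom}\partial q$ at the very end of Step~3. This is not automatic for an arbitrary positively $2$-homogeneous convex function and is where parabolic regularity (not just twice epi-differentiability) is decisive: the attainment of the infimum defining $d^{2}f(x,u)$ produces explicit subgradients of $q$ everywhere on the critical cone. The earlier Steps~1 and~2 are routine bookkeeping via the sum/inverse rules and the already invoked Mohammadi--Sarabi machinery; the genuine analytic content of the lemma is concentrated in the polarity-plus-bipolar computation and the subsequent use of parabolic regularity.
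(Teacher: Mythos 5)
Your Steps 1--2 and the polarity computation in Step 3 are correct and, after unwinding, amount to the same two halves as the paper's argument: (a) the range of the regular coderivative of $\operatorname{Prox}_{f}$ at $x+u$ lies in the critical cone $K:=\{d\,|\,df(x)(d)=\langle u,d\rangle\}$, and (b) $K$ equals $\operatorname{dom}D(\partial f)(x,u)=\operatorname{rge}D\operatorname{Prox}_{f}(x+u)$. For (a) your route through $\widehat{\mathcal{N}}_{\operatorname{gph}\partial f}(x,u)=\mathcal{T}_{\operatorname{gph}\partial f}(x,u)^{\circ}$, tested against $\{0\}\times\partial q(0)$ with $\partial q(0)=(\operatorname{dom}q)^{\circ}$, is valid but more roundabout than the paper's one-liner: since $(I+\partial f)(x)\times\{x\}\subset\operatorname{gph}\operatorname{Prox}_{f}$, monotonicity of regular normals gives directly $\operatorname{rge}\widehat{D}^{*}\operatorname{Prox}_{f}(x+u)\subset\widehat{\mathcal{N}}_{(I+\partial f)(x)}(x+u)=\mathcal{N}_{\partial f(x)}(u)=K$, with no mention of $q$. (Your bipolar step also silently uses that $\operatorname{dom}q=K$ is closed; this holds because $df(x)(\cdot)-\langle u,\cdot\rangle$ is l.s.c.\ and sublinear, but it should be said.)

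The genuine gap is the last step, $\operatorname{dom}\partial q=\operatorname{dom}q$, equivalently $\operatorname{dom}D(\partial f)(x,u)=K$, which is item (b) above and carries all the weight. You rightly note that this fails for a general nonnegative, positively $2$-homogeneous, l.s.c.\ convex $q$ with closed conic domain (one can build such a $q$ on a two-dimensional cone whose gradient blows up along a boundary ray, so that $\partial q$ is empty there), so something specific to the present setting must be supplied. But the justification you give --- that attainment of the infimum in $d^{2}f(x,u)(d)=\inf_{w}\{d^{2}f(x)(d,w)-\langle u,w\rangle\}$ ``produces a subgradient of $q$ at each point of $\operatorname{dom}q$'' --- is not an argument: the infimum is taken over the second-order perturbation $w$, whereas the subgradient you need lives in the primal direction $d$, and the only standard bridge (the marginal-function subgradient rule) would require joint convexity of $(d,w)\mapsto d^{2}f(x)(d,w)$, which is neither available nor claimed. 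This identity is exactly what the paper imports by citation, namely \cite[Lemma~5.2]{MordukhovichTangWang2025}, used to write $\operatorname{rge}D\operatorname{Prox}_{f}(x+u)=\operatorname{dom}D(\partial f)(x,u)=K$; its known proofs construct parabolic arcs $x+\tau d+\frac{1}{2}\tau^{2}w_{\tau}$ in $\operatorname{dom}f$ together with nearby subgradients, which is substantially more than one sentence. Citing that lemma (or reproducing such a construction) would close your proof; as written, the decisive inclusion $\operatorname{dom}q\subset\operatorname{dom}\partial q$ is asserted rather than proved.
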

\begin{proof} It follows from \eqref{eq:inverse-graphical-derivative},  Assumption~\ref{assump-1}, and \cite[Lemma~5.2]{MordukhovichTangWang2025} that
\begin{align}\label{eq:range-graphical-proximal}
\operatorname{rge}D\operatorname{Prox}_{f}(x+u)=\operatorname{dom}D(\partial f)(x,u)=\left\{d\, \left|\, df(x)(d)=\langle u,d\rangle\right.\right\}.
\end{align}
Employing the conditions given in \cite[Corollary~10.11]{Rockafellar1998} yields
\begin{align*}
\operatorname{rge}\widehat{D}^{*}\operatorname{Prox}_{f}(x+u)&\subset\widehat{\mathcal{N}}_{(I+\partial f)(x)}(x+u)=\mathcal{N}_{(I+\partial f)(x)}(x+u)\nonumber\\
&=\mathcal{N}_{\partial f(x)}(u)=\left\{d\, \left|\, df(x)(d)=\langle u,d\rangle\right.\right\},
\end{align*}
which being combined with \eqref{eq:range-graphical-proximal} verifies the claimed inclusion.
\end{proof}

Finally in this section, we establish important relationships between the 
second-order function and second subderivative under the imposed assumptions.

\begin{theorem}\label{proposition-inequality-Gamma-f}
Let $f$ be an extended-real-valued function with $x\in\operatorname{dom}f$ and $u\in\partial f(x)$. The following assertions hold: 

{\bf(i)} Suppose that $f$ satisfies Assumption~{\rm\ref{assump-1}} at $x$ for $u$. Then for any $v\in\operatorname{dom
}D^{*}(\partial f)(x,u)$, there exists  sequences $(x^{k},u^{k})\rightarrow(x,u)$ with $u^{k}\in\partial f(x^{k})$ and $v^{k}\rightarrow v$ as $k\to\infty$ such that
\begin{align}\label{ineq-Gamma-f-less-cond}
\min_{v^{*}\in D^{*}(\partial f)(x,u)(v)}\langle v^{*},v\rangle\geq\limsup_{k\rightarrow\infty}d^{2}f(x^{k},u^{k})(v^{k}).
\end{align}
If in addition $f$ satisfies Assumption~{\rm\ref{assump-2}} at $x$ for $u$, then we have
\begin{align}\label{ineq-Gamma-f-1}
\Gamma_{f}(x,u)(v)\geq\limsup_{k\rightarrow\infty}d^{2}f(x^{k},u^{k})(v^{k}).
\end{align} 

{\bf(ii)} If both Assumptions~{\rm\ref{assump-1}} and {\rm\ref{assump-2}} are satisfied, then for any $v\in\operatorname{dom}\Gamma_{f}(x,u)$ and any sequence $(\tilde{x}^{k},\tilde{u}^{k})\rightarrow(x,u)$ with $\tilde{u}^{k}\in\partial f(\tilde{x}^{k})$ and $\tilde{v}^{k}\rightarrow v$, it follows that
\begin{align}\label{ineq-Gamma-f-0}
\Gamma_{f}(x,u)(v)\leq\liminf_{k\rightarrow\infty}d^{2}f(\tilde{x}^{k},\tilde{u}^{k})(\tilde{v}^{k}).
\end{align}
    
{\bf(iii)} When $f$ is ${\cal C}^{2}$-cone reducible and satisfies Assumption~{\rm\ref{assump-1}} in a neighborhood of $x$  and Assumption~{\rm\ref{assump-2}} at $x$ for $u$, then we get for any $v\in\operatorname{dom}D^{*}(\partial f)(x,u)$ with $(v,\beta)\in\operatorname{dom}D^{*} \mathcal{N}_{\operatorname{epi} f}((x,f(x)),(u,-1))$ that
\begin{align}\label{ineq-Gamma-f-2}
\min_{v^{*}\in D^{*}(\partial f)(x,u)(v)}\langle v^{*},v\rangle=\min_{(z,\gamma)\in D^{*}\mathcal{N}_{\operatorname{epi}f}((x,f(x)),(u,-1))(v,\beta)}\langle(z,\gamma),(v,\beta)\rangle.
\end{align}
\end{theorem}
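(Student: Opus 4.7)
The plan is to reduce all three assertions to two common devices: the outer-limit structure of the limiting coderivative \eqref{lim-cod} and limiting normal cone \eqref{lnc}, and the identity $d^{2}f(x,u)(v)=\langle v,z\rangle$ for $z\in D(\partial f)(x,u)(v)$ delivered by Proposition~\ref{proposition-domain-Gamma-f}, combined with the reformulation $\Gamma_{f}(x,u)(v)=\min_{z\in D^{*}(\partial f)(x,u)(v)}\langle v,z\rangle$ from Proposition~\ref{proposition-reformulate-Gamma-f}. The recurring pattern is to replace a limiting object at $(x,u)$ by its regular counterpart at approximating subgradient pairs, evaluate the second subderivative on those approximations, and pass to the limit.

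For assertion (i), I would fix a minimizer $\bar{v}^{*}\in D^{*}(\partial f)(x,u)(v)$ of $\langle\cdot,v\rangle$. Unfolding \eqref{lnc} at $(\bar v^{*},-v)\in\mathcal{N}_{\operatorname{gph}\partial f}(x,u)$ yields sequences $(x^{k},u^{k})\to(x,u)$ with $u^{k}\in\partial f(x^{k})$ and $(v^{*k},v^{k})\to(\bar v^{*},v)$ such that $v^{*k}\in\widehat{D}^{*}(\partial f)(x^{k},u^{k})(v^{k})$. The convexity, prox-regularity and subdifferential continuity of $f$ give the sandwich estimate $\langle v^{*k},v^{k}\rangle\ge d^{2}f(x^{k},u^{k})(v^{k})$, which follows from the standard connection between the regular coderivative of the subgradient mapping and the second subderivative and from the upper-approximation property \eqref{rnc}. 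Passing to $\limsup_{k\to\infty}$ and using continuity of the inner product produces \eqref{ineq-Gamma-f-less-cond}; Proposition~\ref{proposition-reformulate-Gamma-f} then rewrites the left-hand side as $\Gamma_{f}(x,u)(v)$ under Assumption~\ref{assump-2}, giving \eqref{ineq-Gamma-f-1}.

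For assertion (ii), I would start from the representation $\Gamma_{f}(x,u)(v)=\min_{z\in D^{*}(\partial f)(x,u)(v)}\langle v,z\rangle$. If $d^{2}f(\tilde x^{k},\tilde u^{k})(\tilde v^{k})=+\infty$ along a subsequence the inequality is trivial, so I can restrict to finite values. At each such $k$ I would select $\tilde z^{k}\in\widehat{D}^{*}(\partial f)(\tilde x^{k},\tilde u^{k})(\tilde v^{k})$ satisfying $\langle\tilde v^{k},\tilde z^{k}\rangle\le d^{2}f(\tilde x^{k},\tilde u^{k})(\tilde v^{k})$, via the same sandwich/duality inequality used in (i) together with \eqref{eq:d2f-d2f*-value} in the role of attainment. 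I would then extract a cluster point $z$ of $\{\tilde z^{k}\}$ and invoke \eqref{lim-cod} directly to get $z\in D^{*}(\partial f)(x,u)(v)$, whence $\Gamma_{f}(x,u)(v)\le\langle v,z\rangle\le\liminf_{k}\langle\tilde v^{k},\tilde z^{k}\rangle\le\liminf_{k}d^{2}f(\tilde x^{k},\tilde u^{k})(\tilde v^{k})$, which is \eqref{ineq-Gamma-f-0}. The boundedness of $\{\tilde z^{k}\}$ is the delicate point: it would come from the calmness available through Proposition~\ref{prop-calmness-C2-cone-reducible-function} in the $\mathcal{C}^{2}$-cone reducible regime that naturally accompanies Assumption~\ref{assump-2}.

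For assertion (iii), I would exploit the $\mathcal{C}^{2}$-cone reducibility to identify $\operatorname{epi}f$ locally near $(x,f(x))$ with a pointed closed convex cone $K$ via a smooth map $\Xi$ with surjective derivative, and apply the coderivative chain rule encoded in \eqref{eq:coderivative-sum} to transport $D^{*}\mathcal{N}_{\operatorname{epi}f}((x,f(x)),(u,-1))$ to $D^{*}\mathcal{N}_{K}$. Using the stratified description \eqref{eq:prop-3-normal-formular}, the relevant branch $\lambda(v,-1)$ is precisely the one coupling the epigraph-normal coderivative to $D^{*}(\partial f)(x,u)$, with the $\gamma$-coordinate pinned down by the first-order condition $df(x)(v)=\langle u,v\rangle$ that characterizes $\operatorname{dom}d^{2}f(x,u)$. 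The pairing $\langle(z,\gamma),(v,\beta)\rangle$ then decomposes as $\langle z,v\rangle+\gamma\beta$, and the minimization over $(z,\gamma)$ with $(v,\beta)$ fixed reduces to the minimization over $z$, yielding \eqref{ineq-Gamma-f-2}. I expect the main obstacle to be assertion (ii), namely securing the uniform boundedness of the witnesses $\tilde z^{k}$ since no assumption is imposed at the approximating points; in (iii), the $\gamma$-$\beta$ bookkeeping will demand care but becomes essentially computational once the cone reduction and \eqref{eq:prop-3-normal-formular} are in hand.
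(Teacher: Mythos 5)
Your part (i) follows essentially the paper's route: unfold the limiting coderivative into regular coderivative elements at nearby graph points, pair them against the graphical derivative, and use $d^{2}f(x^{k},u^{k})(v^{k})=\langle v^{k},w^{k}\rangle$ for $w^{k}\in D(\partial f)(x^{k},u^{k})(v^{k})$; the only thing you gloss over is why $D(\partial f)(x^{k},u^{k})(v^{k})\neq\emptyset$ at the approximating points, which is precisely where Lemma~\ref{lemma:subseq-range-regular-coderivative} and Assumption~\ref{assump-1} enter. The genuine gap is in part (ii). Your plan requires selecting $\tilde z^{k}\in\widehat{D}^{*}(\partial f)(\tilde x^{k},\tilde u^{k})(\tilde v^{k})$, but this set can be empty even when $d^{2}f(\tilde x^{k},\tilde u^{k})(\tilde v^{k})$ is finite and $v\in\operatorname{dom}\Gamma_{f}(x,u)$. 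Take $f=|\cdot|$ on $\mathcal{R}$, $(x,u)=(0,1)$, the constant sequence $(\tilde x^{k},\tilde u^{k})=(0,1)$ and $\tilde v^{k}=v>0$: here $\widehat{\mathcal{N}}_{\operatorname{gph}\partial f}(0,1)=\mathcal{R}_{-}\times\mathcal{R}_{+}$, so $\widehat{D}^{*}(\partial f)(0,1)(v)=\emptyset$ for $v>0$, while $d^{2}f(0,1)(v)=0$ and $\operatorname{dom}\Gamma_{f}(0,1)=\mathcal{R}$. Since (ii) quantifies over \emph{all} sequences, including constant ones, your witnesses need not exist; moreover, the calmness you invoke for their boundedness (Proposition~\ref{prop-calmness-C2-cone-reducible-function}) is not among the hypotheses of (ii). The paper proceeds differently: it compares $d^{2}f(\tilde x^{k},\tilde u^{k})(\tilde v^{k})$ with $\Gamma_{f}(\tilde x^{k},\tilde u^{k})(\tilde v^{k})$ --- which is always defined through ${\cal J}\operatorname{Prox}_{f}$ --- and then establishes lower semicontinuity of $\Gamma_{f}$ along $\operatorname{gph}\partial f$ by passing to points where $\operatorname{Prox}_{f}$ is differentiable, using $W^{k}=\nabla\operatorname{Prox}_{f}(\hat x^{k}+\hat u^{k})\rightarrow\overline{W}$ and the Moore--Penrose representation $\langle\tilde v^{k},((W^{k})^{\dagger}-I)\tilde v^{k}\rangle$ guaranteed by Assumption~\ref{assump-2}.

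Part (iii) is also not established by your sketch. The assertion is not a bookkeeping identity: $D^{*}\mathcal{N}_{\operatorname{epi}f}((x,f(x)),(u,-1))$ aggregates regular normals at all nearby points $((\tilde x,\tilde\alpha),\lambda(\tilde u,-1))$ of $\operatorname{gph}\mathcal{N}_{\operatorname{epi}f}$ with varying $\lambda>0$ and $\tilde\alpha\ge f(\tilde x)$, and there is no ``branch'' of \eqref{eq:prop-3-normal-formular} along which the minimization over $(z,\gamma)$ simply collapses to a minimization over $z$ with the $\gamma\beta$ term dropping out. The paper's proof computes the graphical derivative of $\mathcal{N}_{\operatorname{epi}f}$ at those nearby points via the cone reduction, uses parabolic regularity to identify the curvature term with $\lambda d^{2}f(\tilde x,\tilde u)(\tilde v)$ through the second-order tangent set to $\operatorname{epi}f$ (see \eqref{eq:inequlity-minimal-coderivative-3}), and then closes a two-sided squeeze in which both inequalities \eqref{ineq-Gamma-f-less-cond} and \eqref{ineq-Gamma-f-0} from parts (i) and (ii) are indispensable. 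Your outline contains neither the parabolic-regularity link nor the final squeeze, so the equality \eqref{ineq-Gamma-f-2} remains unproved.
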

\begin{proof}
It follows from \eqref{eq:inverse-graphical-derivative}, \eqref{eq:coderivative-sum}, and Lemma~\ref{lemma:subseq-range-regular-coderivative} that
\begin{align}
\label{eq:dom-regular-coderivative-dom-graphical-derivative}
\operatorname{dom}\widehat{D}^{*}(\partial f)(x,u)\subset-\operatorname{dom}D(\partial f)(x,u).
\end{align}  
Pick any $z\in D^{*}(\partial f)(x,u)(v)$ and find $(x^{k},u^{k})$ with $u^{k}\in\partial f(x^{k})$ and $(z^{k},v^{k})$ with $z^{k}\in\widehat{D}^{*}(\partial f)(x^{k},u^{k})(-v^{k})$ such that $(x^{k},u^{k})\rightarrow(x,u)$ and $(z^{k},-v^{k})\rightarrow(z,v)$ as $k\to\infty$, which yields $D(\partial f)(x^{k},u^{k})(v^{k})\neq\emptyset$. Using  \cite[Proposition~8.37]{Rockafellar1998} and \cite[Lemma~3.6]{Chieu2021} tells us that
\begin{align*}
\langle z^{k},-v^{k}\rangle\geq\langle v^{k},w^{k}\rangle=d^{2}f(x^{k},u^{k})(v^{k}),
\end{align*}
where $w^{k}\in D(\partial f)(x^{k},u^{k})(v^{k})$. Passing to the limit as $k\rightarrow\infty$, we arrive at \eqref{ineq-Gamma-f-less-cond}. Consequently, applying \eqref{sovf-rep} of 
Proposition~\ref{proposition-reformulate-Gamma-f} brings us to \eqref{ineq-Gamma-f-1} and thus justifies assertion (i). 
	
To proceed further, for any $(\tilde{x}^{k},\tilde{u}^{k})\rightarrow(x,u)$ with $\tilde{u}^{k}\in\partial f(\tilde{x}^{k})$, $\tilde{v}^{k}\rightarrow v$, observe that
$\Gamma_{f}(\tilde{x}^{k},\tilde{u}^{k})(\tilde{v}^{k})\leq d^{2}f(\tilde{x}^{k},\tilde{u}^{k})(\tilde{v}^{k})$, and hence
\begin{align*}
\limsup_{k\rightarrow\infty}\Gamma_{f}(\tilde{x}^{k},\tilde{u}^{k})(\tilde{v}^{k})\leq\liminf_{k\rightarrow\infty}d^{2}f(\tilde{x}^{k},\tilde{u}^{k})(\tilde{v}^{k}).
\end{align*}
There exists a sequence of $(\hat{x}^{k},\hat{u}^{k})\rightarrow(x,u)$ such that $\operatorname{Prox}_{f}$ is differentiable at $\hat{x}^{k}+\hat{u}^{k}$ and that $W^{k}=\nabla\operatorname{Prox}_{f}(\hat{x}^{k}+\hat{u}^{k})$ with 
$W^{k}\rightarrow\overline{W}\in\partial_{B}\operatorname{Prox}_{f}(x+u)$ with $\Gamma_{f}(\hat{x}^{k},\hat{u}^{k})(\tilde{v}^{k})=\langle\tilde{v}^{k},((W^{k})^{\dagger}-I)\tilde{v}^{k}\rangle$. Moreover, we get $\tilde{v}^{k}\in\operatorname{rge}\overline{W}$ for all $k\in\mathbb N$ sufficiently large with the equalities
\begin{align*}
\lim_{k\rightarrow\infty}\Gamma_{f}(\hat{x}^{k},\hat{u}^{k})(\tilde{v}^{k})=\langle v,(\overline{W}^{\dagger}-I)v\rangle=\Gamma_{f}(x,u)(v).
\end{align*}
This readily implies that
\begin{align*}
\Gamma_{f}(x,u)(v)&\leq\limsup_{k\rightarrow\infty}\Gamma_{f}(\tilde{x}^{k},\tilde{u}^{k})(\tilde{v}^{k})\leq\liminf_{k\rightarrow\infty} d^{2}f(\tilde{x}^{k},\tilde{u}^{k})(\tilde{v}^{k}),
\end{align*}  
which justifies the estimate in \eqref{ineq-Gamma-f-0} of assertion (ii).
		
Now we verify equality \eqref{ineq-Gamma-f-2} in assertion (iii). Given $\lambda>0$, $\tilde{x}\in\operatorname{dom}f$, $\tilde{\alpha}:=f(\tilde{x})$, and $\tilde{u}\in\partial f(\tilde{x})$, use \cite[Lemma~5.2]{MordukhovichTangWang2025},
\cite[Theorem~8.2]{Rockafellar1998}, and \cite[Proposition~2.126]{Bonnans2000} to see that the domain of $D\mathcal{N}_{\operatorname{epi}f}((\tilde{x},\tilde{\alpha}),\lambda(\tilde{u},-1))$ admits the representations
\begin{align}\label{eq:dom-graphical-normal-epi-g}
\begin{array}{ll}
\operatorname{dom}D\mathcal{N}_{\operatorname{epi}f}((\tilde{x},\tilde{\alpha}),\lambda(\tilde{u},-1))
=\operatorname{dom}D(\partial\delta_{\operatorname{epi}f}((\tilde{x},\tilde{\alpha}),\lambda(\tilde{u},-1)))\\
=\left\{(y,c)\,\left|\,d\delta_{\operatorname{epi}f}(\tilde{x},\tilde{\alpha})(y,c)=\lambda\langle\tilde{u},y\rangle-\lambda c\right.\right\}
=\left\{(y,c)\,\left|\,df(\tilde{x})(y)=\langle \tilde{u},y\rangle=c\right.\right\}\\
=\{(y,c)\,|\,y\in\operatorname{dom}D(\partial f)(\tilde{x},\tilde{u}),\,c
=\langle\tilde{u},y\rangle\}
:=\widehat{\mathcal{K}}((\tilde{x},\tilde{\alpha}),\lambda(\tilde{u},-1)).
\end{array}
\end{align}	
It follows from \eqref{eq:inverse-graphical-derivative},  \eqref{eq:coderivative-sum}, and Lemma~\ref{lemma:subseq-range-regular-coderivative} that
\begin{align}\label{eq:dom-regular-coderivative-dom-graphical-derivative-epi}
-\operatorname{dom}\widehat{D}^{*}\mathcal{N}_{\operatorname{epi}f}((\tilde{x},\tilde{\alpha}),\lambda(\tilde{u},-1))&\subset\operatorname{dom}D\mathcal{N}_{\operatorname{epi}f}((\tilde{x},\tilde{\alpha}),\lambda(\tilde{u},-1)).
\end{align}
Since the classical Robinson's constraint qualification clearly holds for the constraint $(x,\gamma)\in\operatorname{epi}f$, the set-valued mapping $M(x,\gamma):=(x,\gamma)-\operatorname{epi}f$ is metrically subregular at $(\tilde{x},\tilde{\alpha})$ for $(0,0)$. Moreover,  the set $\operatorname{epi}f$ is ${\cal C}^{2}$-cone reducible at $(\tilde{x},\tilde{\alpha})$ with $h$ being the corresponding  ${\cal C}^2$-smooth mapping. By \cite[Corollary~5.4]{Helmut2019}, for any $(y,c)\in\widehat{\mathcal{K}} ((\tilde{x},\tilde{\alpha}),\lambda(\tilde{u},-1))$ we have
\begin{align*}
D\mathcal{N}_{\operatorname{epi}f}((\tilde{x},\tilde{\alpha}),\lambda(\tilde{u},-1))(y,c)=&\lambda\nabla^{2}\left\langle(\nabla h(\tilde{x},\tilde{\alpha})^{T})^{-1}(\tilde{u},-1),h(\cdot,\cdot)\right\rangle(\tilde{x},\tilde{\alpha})(y,c)\\
&+\mathcal{N}_{\widehat{\mathcal{K}}((\tilde{x},\tilde{\alpha}),\lambda(\tilde{u},-1))}(y,c).
\end{align*}
Given $(v,\beta)\in\operatorname{dom}\widehat{D}^{*}\mathcal{N}_{\operatorname{epi}f}((\tilde{x},\tilde{\alpha}),\lambda(\tilde{u},-1))$ and 
\begin{align*}
(v^{*},\beta^{*})\in\lambda\nabla^{2}\left\langle(\nabla h(\tilde{x},\tilde{\alpha})^{T})^{-1}(\tilde{u},-1),h(\cdot,\cdot)\right\rangle(\tilde{x},\tilde{\alpha})(v,\beta)+\widehat{\mathcal{K}}((\tilde{x},\tilde{\alpha}),\lambda(\tilde{u},-1))^{\circ},
\end{align*}
we deduce from \eqref{eq:dom-regular-coderivative-dom-graphical-derivative-epi} that $(v,\beta)\in-\widehat{\mathcal{K}}
((\tilde{x},\tilde{\alpha}),\lambda(\tilde{u},-1))$ and
\begin{align*}
\left\langle((v^{*},\beta^{*}),-(v,\beta)),((y,c),(y^{*},c^{*}))\right\rangle\leq 0,\quad\forall\,(y^{*},c^{*})\in D\mathcal{N}_{\operatorname{epi}f}((\tilde{x},\tilde{\alpha}),\lambda(\tilde{u},-1))(y,c),
\end{align*}
which yields in turn the inclusion
\begin{align*}
&\lambda\nabla^{2}\left\langle(\nabla h(\tilde{x},\tilde{\alpha})^{T})^{-1}(\tilde{u},-1),h(\cdot,\cdot)\right\rangle(\tilde{x},\tilde{\alpha})(v,\beta)
+\widehat{\mathcal{K}}((\tilde{x},\tilde{\alpha}),\lambda(\tilde{u},-1))^{\circ}\\
&\qquad\subset \widehat{D}^{*}\mathcal{N}_{\operatorname{epi}f}((\tilde{x},\tilde{\alpha}),\lambda(\tilde{u},-1))(v,\beta).
\end{align*}
By the limiting coderivative construction combined with the outer semicontinuity of the normal cone mapping (see, e.g., \cite[Proposition~6.5]{Rockafellar1998}), we get that
\begin{align}\label{eq:relation-normal-gph-normal-epi-g}
\begin{array}{ll}
&\disp\mathcal{N}_{\operatorname{gph}\mathcal{N}_{\operatorname{epi}f}}((x,f(x)),(u,-1))\\
&=\disp\Limsup_{((\tilde{x},\tilde{\alpha}),\lambda(\tilde{u},-1)){\xrightarrow{\operatorname{gph}\mathcal{N}_{\operatorname{epi}f}}}((x,f(x)),\disp(u,-1))}\widehat{\mathcal{N}}_{\operatorname{gph}\mathcal{N}_{\operatorname{epi}f}}((\tilde{x},\tilde{\alpha}),\lambda(\tilde{u},-1))\\
&\disp\supset\Limsup_{((\tilde{x},\tilde{\alpha}),\lambda(\tilde{u},-1)){\xrightarrow{\operatorname{gph}\mathcal{N}_{\operatorname{epi}f}}}((x,f(x)),(u,-1))}G((\tilde{x},\tilde{\alpha}),\lambda(\tilde{u},-1)):=\mathcal{G}, 
\end{array}
\end{align}
where the mapping $G(\cdot)$ under the outer limit \eqref{pk} in \eqref{eq:relation-normal-gph-normal-epi-g} is defined by
\begin{align*}
G((\tilde{x},\tilde{\alpha}),\lambda(\tilde{u},-1))&:=\big\{((v^{*},\beta^{*}),-(v,\beta))\,\big|\,(v^{*},\beta^{*})\in\\
&\lambda\nabla^{2}\left\langle(\nabla h(\tilde{x},\tilde{\alpha})^{T})^{-1}(\tilde{u},-1),h(\cdot,\cdot)\right\rangle(\tilde{x},\tilde{\alpha})(v,\beta)\\
&
+\widehat{\mathcal{K}}((\tilde{x},\tilde{\alpha}),\lambda(\tilde{u},-1))^{\circ},\,(v,\beta)\in\operatorname{dom}\widehat{D}^{*}\mathcal{N}_{\operatorname{epi}f}((\tilde{x},\tilde{\alpha}),\lambda(\tilde{u},-1))\big\}.
\end{align*}
The relation \eqref{eq:dom-regular-coderivative-dom-graphical-derivative} implies that for any $v\in\operatorname{dom}\widehat{D}^{*}(\partial f)(\tilde{x},\tilde{u})$, we have $-v\in\operatorname{dom}D(\partial f)(\tilde{x},\tilde{u})$. Using  Proposition~\ref{proposition-domain-Gamma-f} and the equalities
\begin{align*}
\Limsup_{(\tilde{x},\tilde{u}){\xrightarrow{\operatorname{gph}\partial f}}(x,u)}\operatorname{dom}\widehat{D}^{*}(\partial f)(\tilde{x},\tilde{u})=\operatorname{dom}D^{*}(\partial f)(x,u)=\operatorname{aff}\big\{d\,\big|\,df(x)(d)=\langle u,d\rangle\big\}
\end{align*}
combined with \eqref{eq:dom-regular-coderivative-dom-graphical-derivative}, \eqref{eq:dom-graphical-normal-epi-g} and \eqref{eq:relation-normal-gph-normal-epi-g} ensures that 
\begin{align*}
\operatorname{dom}D^{*}(\partial f)(x,u)\subset\Limsup_{(\tilde{x},\tilde{u}){\xrightarrow{\operatorname{gph}\partial f}}(x,u)}\operatorname{dom}D(\partial f)(\tilde{x},\tilde{u}).
\end{align*}
Hence for any $v\in\operatorname{dom}D^{*}(\partial f)(x,u)$, there exist $(v^{*},\beta^{*})\in\mathcal{R}^{m+1}$ and $\beta\in\mathcal{R}$ such that
\begin{align*}
(v,\beta)\in\operatorname{dom}D^{*}\mathcal{N}_{\operatorname{epi}f}((x,f(x)),(u,-1))\;\mbox{ and }\;
((v^{*},\beta^{*}),-(v,\beta))\in\mathcal{G}.
\end{align*}
It follows from \eqref{eq:relation-normal-gph-normal-epi-g} that for any $v\in\operatorname{dom}D^{*}(\partial f)(x,u)$ and $\beta\in\mathcal{R}$ satisfying $(v,\beta)\in\\\operatorname{dom} D^{*}\mathcal{N}_{\operatorname{epi}f}((x,f(x)),(u,-1))$, we get the conditions
\begin{align}\label{eq:inequlity-minimal-coderivative-1}
\begin{array}{ll}
\disp\min_{(z,\gamma)\in D^{*}\mathcal{N}_{\operatorname{epi}f}((x,f(x)),(u,-1))(v,\beta)}\langle(v,\beta), (z,\gamma)\rangle\\
=\disp\min_{((z,\gamma),-(v,\beta))\in \disp\mathcal{N}_{\operatorname{gph}\mathcal{N}_{\operatorname{epi}f}}((x,f(x)),(u,-1))}\langle(v,\beta), (z,\gamma)\rangle\\
\disp\leq\min_{((v^{*},\beta^{*}),-(v,\beta))\in\mathcal{G}}\langle(v^{*},\beta^{*}), (v,\beta)\rangle\\
=\disp\liminf_{((\tilde{x},\tilde{\alpha}),\lambda(\tilde{u},-1)){\xrightarrow{\operatorname{gph}\mathcal{N}_{\operatorname{epi}f}}}((x,f(x)),(u,-1))} \min_{((\tilde{v}^{*},\tilde{\beta}^{*}),-(\tilde{v},\tilde{\beta}))\in \disp G((\tilde{x},\tilde{\alpha}),\lambda(\tilde{u},-1))}\langle(\tilde{v}^{*},\tilde{\beta}^{*}), (\tilde{v},\tilde{\beta})\rangle.
\end{array}
\end{align}
Pick $v^{*}\in D^{*}(\partial f)(x,u)(v)$ and find $(\tilde{x},\tilde{u})$, $(\tilde{v}^{*},\tilde{v})$ with $\tilde{v}^{*}\in\widehat{D}^{*}(\partial f)(\tilde{x},\tilde{u})(\tilde{v})$ such that $(\tilde{x},\tilde{u})\rightarrow(x,u)$ with $\tilde{u}\in\partial f(\tilde{x})$ and $(\tilde{v}^{*},\tilde{v})\rightarrow(v^{*},v)$. We get from \eqref{eq:dom-regular-coderivative-dom-graphical-derivative}, \cite[Lemma~3.6]{Chieu2021}, and \cite[Proposition~8.37]{Rockafellar1998} that
\begin{align}\label{eq:inequlity-minimal-coderivative-2}
-d^{2}f(\tilde{x},\tilde{u})(\tilde{v})=\langle -\tilde{z},\tilde{v}\rangle\geq\langle \tilde{v}^{*},-\tilde{v}\rangle\rightarrow-\langle v^{*},v\rangle
\end{align}
whenever $-\tilde{z}\in D(\partial f)(\tilde{x},\tilde{u})(-\tilde{v})$. Denote $\mathcal{T}:=\mathcal{T}_{\operatorname{epi}f}^{2}((\tilde{x},\tilde{\alpha}),(\tilde{v},df(\tilde{x})(\tilde{v})))=\{(w,\gamma)\,|\,d^{2}f(\tilde{x})(\tilde{v},w)\leq\gamma\}$. Since $f$ is parabolically regular, we have the relationships
\begin{align}\label{eq:inequlity-minimal-coderivative-3}
\begin{array}{ll}
&\disp\min_{((\tilde{v}^{*},\tilde{\beta}^{*}),-(\tilde{v},\tilde{\beta}))\in G((\tilde{x},\tilde{\alpha}),\lambda(\tilde{u},-1))}\langle(\tilde{v}^{*},\tilde{\beta}^{*}), (\tilde{v},\tilde{\beta})\rangle\\
&\disp\leq\lambda\left\langle(\nabla h(\tilde{x},\tilde{\alpha})^{T})^{-1}(\tilde{u},-1),\nabla^{2}h(\tilde{x},\tilde{\alpha})((\tilde{v},\tilde{\beta}),(\tilde{v},\tilde{\beta}))\right\rangle\\
&=-\disp\lambda\sup_{(w,\gamma)\in\mathcal{T}}\{\langle w,\tilde{u}\rangle-\gamma\}=\lambda\inf_{w\in\mathcal{R}^{m}}\{d^{2}f(\tilde{x})(\tilde{v},w)-\langle w,\tilde{u}\rangle\}\\
&=\disp\lambda d^{2}f(\tilde{x},\tilde{u})(\tilde{v}),
\end{array}
\end{align}
which allow us to deduce from \eqref{eq:inequlity-minimal-coderivative-1}, \eqref{eq:inequlity-minimal-coderivative-2}, and \eqref{eq:inequlity-minimal-coderivative-3} that
\begin{align}\label{inequality-min-val}
\min_{(z,\gamma)\in D^{*}\mathcal{N}_{\operatorname{epi}f}((x,f(x)),(u,-1))(v,\beta)}\langle(v,\beta), (z,\gamma)\rangle&\leq\liminf_{{\tilde{v}\rightarrow v}\atop{(\tilde{x},\tilde{u}){\xrightarrow{\operatorname{gph}\partial f}}(x,u)}} d^{2}f(\tilde{x},\tilde{u})(\tilde{v})\nonumber\\
&\leq\min_{v^{*}\in D^{*}(\partial f)(x,u)(v)}\langle v^{*},v\rangle.
\end{align}
It follows from \eqref{ineq-Gamma-f-less-cond} that there exist $(\hat{x}^{k},\hat{u}^{k})\rightarrow(x,u)$ with $\hat{u}^{k}\in\partial f(\hat{x}^{k})$ and $\hat{v}^{k}\rightarrow v$ for which
\begin{align}\label{inequality-min-val-geq}
\begin{aligned}
\disp\min_{(z,\gamma)\in D^{*}\mathcal{N}_{\operatorname{epi}f}((x,f(x)),(u,-1))(v,\beta)}\langle(v,\beta),(z,\gamma)\rangle
&\ge\disp\limsup_{k\rightarrow\infty}d^{2}\delta_{\operatorname{epi}f}((\hat{x}^{k},f(\hat{x}^{k})),(\hat{u}^{k},-1))(\hat{v}^{k})\\
&=\disp\limsup_{k\rightarrow\infty}d^{2}f(\hat{x}^{k},\hat{u}^{k})(\hat{v}^{k}),
\end{aligned}
\end{align}
where the last equality is derived from \cite[Lemma~3.6]{Chieu2021}, \cite[Corollary~5.4]{Helmut2019}, and \eqref{eq:inequlity-minimal-coderivative-3}. Therefore, we  conclude from \eqref{ineq-Gamma-f-0}, \eqref{inequality-min-val}, and \eqref{inequality-min-val-geq} that
\begin{align*}
\min_{(z,\gamma)\in D^{*}\mathcal{N}_{\operatorname{epi}f}((x,f(x)),(u,-1))(v,\beta)}\langle(v,\beta), (z,\gamma)\rangle\leq\min_{v^{*}\in D^{*}(\partial f)(x,u)(v)}\langle v^{*},v\rangle\leq\liminf_{k\rightarrow\infty}d^{2}f(\hat{x}^{k},\hat{u}^{k})(\hat{v}^{k})&\\\leq\limsup_{k\rightarrow\infty}d^{2}f(\hat{x}^{k},\hat{u}^{k})(\hat{v}^{k})\leq\min_{(z,\gamma)\in D^{*}\mathcal{N}_{\operatorname{epi}f}((x,f(x)),(u,-1))(v,\beta)}\langle(v,\beta), (z,\gamma)\rangle,&
\end{align*}
which thus completes the proof of the theorem.
\end{proof}\vspace*{-0.2in}

\section{No-Gap Second-Order Conditions for Tilt Stability}\label{sec:no-gap}\vspace*{-0.05in}

In this section, we derive second-order necessary and sufficient and conditions of both neighborhood and pointbased types for tilt-stable local minimizers of the composite optimization problem \eqref{P} under the assumptions above along with the MSCQ for \eqref{P} formulated as follows.

\begin{assumption}\label{assump-3}
Let $x\in\operatorname{dom}g\circ F$, and let $\mathcal{O}$ be a neighborhood of $x$. Suppose that $g$ is locally Lipschitzian around $F(x)$ relative to the domain of $g$ with constant $\ell\in\mathcal{R}_{+}$ and that the mapping $H(x):=\{x'\in\mathcal{O}\,|\,F(x')\in\operatorname{dom}g\}$ satisfies the MSCQ at $x$ for $0$ with modulus $\kappa$, i.e., there exist a constant $\kappa>0$ and a neighborhood $\mathcal{U}$ of $x$ such that
\begin{align*}
\operatorname{dist}(x',\operatorname{dom}g\circ F)\leq\kappa\operatorname{dist}(F(x'),\operatorname{dom}g)\;\mbox{ for all }\;x'\in\mathcal{U}.
\end{align*} 
\end{assumption}

Similarly to the proof of \cite[Proposition~7.1]{Mohammadi2022}, we get the following statement.

\begin{proposition}\label{prop-prox-regular-subdifferential-continuous}
Let $\theta=g\circ F$ with $x\in\operatorname{dom}\th$, and let $g$ satisfy Assumption~{\rm\ref{assump-3}} at $x$. Then $\theta$ is prox-regular and subdifferentially continuous at $x$ for any subgradient $u\in\partial\theta(x)$.  
\end{proposition}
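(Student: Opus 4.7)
The plan is to reduce both claimed properties of $\theta=g\circ F$ to the corresponding properties of $g$ by means of an MSCQ-based chain rule with uniformly bounded multipliers, taking advantage of the fact that $g$ is l.s.c.\ convex (hence prox-regular and subdifferentially continuous on its domain) and locally Lipschitz on $\operatorname{dom}g$ near $F(x)$.

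First, I would invoke the chain rule for $\partial\theta$ available under Assumption~\ref{assump-3}: for every $x'$ near $\ox$ and every $u\in\partial\theta(x')$, there exists $\lambda\in\partial g(F(x'))$ such that $u=\nabla F(x')^{T}\lambda$. Because $g$ is Lipschitz with constant $\ell$ on $\operatorname{dom}g$ near $F(\ox)$, the convex subgradient set $\partial g(F(x'))$ is contained in $\ell\,\mathbb{B}_{\mathcal{R}^{m}}$ uniformly for $x'$ near $\ox$, so the multiplier $\lambda$ can be chosen with $\|\lambda\|\le\ell$. This uniform bound is the feature that will carry the local Lipschitzian/quadratic control through to $\theta$.

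Next, for \emph{prox-regularity} of $\theta$ at $\ox$ for an arbitrary $\ou\in\partial\theta(\ox)$, I would fix $x,x'$ in a small neighborhood of $\ox$ and $u\in\partial\theta(x)$ near $\ou$, pick $\lambda\in\partial g(F(x))$ with $u=\nabla F(x)^{T}\lambda$ and $\|\lambda\|\le\ell$, and combine two ingredients: the global convex inequality $g(F(x'))\ge g(F(x))+\langle\lambda,F(x')-F(x)\rangle$, and the second-order Taylor expansion $F(x')-F(x)=\nabla F(x)(x'-x)+r(x,x')$ with $\|r(x,x')\|\le M\|x'-x\|^{2}$ for some $M>0$ (available from the ${\cal C}^2$-smoothness of $F$ on a bounded neighborhood). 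Putting them together yields
\begin{align*}
\theta(x')\;\ge\;\theta(x)+\langle u,x'-x\rangle-\ell M\|x'-x\|^{2},
\end{align*}
which is exactly the prox-regularity inequality with $\rho:=2\ell M$; the inequality holds uniformly once $x,x',u$ lie in the specified small neighborhoods, so no threshold on $\theta(x)$ relative to $\theta(\ox)$ is really needed beyond the standing domain constraint.

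Finally, for \emph{subdifferential continuity} of $\theta$ at $\ox$ for $\ou$, I would take a sequence $(x^{k},u^{k})\to(\ox,\ou)$ with $u^{k}\in\partial\theta(x^{k})$, extract via the chain rule corresponding multipliers $\lambda^{k}\in\partial g(F(x^{k}))$ with $u^{k}=\nabla F(x^{k})^{T}\lambda^{k}$ and $\|\lambda^{k}\|\le\ell$. Passing to a subsequence gives $\lambda^{k}\to\Bar\lambda$, and outer semicontinuity of $\partial g$ (which is automatic for an l.s.c.\ convex function) together with continuity of $F$ yields $\Bar\lambda\in\partial g(F(\ox))$ and $\ou=\nabla F(\ox)^{T}\Bar\lambda$. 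The subdifferential continuity of the convex function $g$ at $F(\ox)$ for $\Bar\lambda$ then forces $g(F(x^{k}))\to g(F(\ox))$, i.e., $\theta(x^{k})\to\theta(\ox)$; the limit is independent of the subsequence, so the full sequence converges.

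The main obstacle, and the step that really needs Assumption~\ref{assump-3}, is the existence of the chain-rule multiplier with the uniform bound $\|\lambda\|\le\ell$; without MSCQ one could only obtain a limiting chain rule with possibly unbounded multipliers, which would destroy the quadratic control needed for prox-regularity and the convergence argument for subdifferential continuity. Once this chain rule is in hand, the remaining arguments are routine convex-analytic manipulations combined with a second-order Taylor expansion of $F$.
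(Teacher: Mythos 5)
Your overall architecture (MSCQ-based chain rule with a bounded multiplier selection, the global convexity inequality for $g$, and a second-order Taylor expansion of $F$) is the right one and is essentially what the proof cited by the paper (\cite[Proposition~7.1]{Mohammadi2022}) does. However, there is a genuine error in the step that everything else hinges on: you claim that because $g$ is Lipschitz continuous with constant $\ell$ \emph{relative to} $\operatorname{dom}g$ near $F(\ox)$, the whole subdifferential $\partial g(F(x'))$ lies in $\ell\,\mathbb{B}_{\mathcal{R}^{m}}$. That is false. Lipschitz continuity relative to the domain controls only the ``tangential'' part of the subdifferential; in general $\partial g(y)\supset \mathcal{N}_{\operatorname{dom}g}(y)+v$ for any $v\in\partial g(y)$, which is unbounded whenever $y\notin\operatorname{int}\operatorname{dom}g$. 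The counterexample is exactly the most important instance of the framework: $g=\delta_{C}$ for a closed convex cone $C$ (NLP, SOCP, SDP), where $\ell=0$ yet $\partial g(y)=\mathcal{N}_{C}(y)$ is an unbounded cone. So the bound $\|\lambda\|\le\ell$ cannot be used, and with it the quadratic estimate $\theta(x')\ge\theta(x)+\langle u,x'-x\rangle-\ell M\|x'-x\|^{2}$ loses its justification.

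The repair is available and is precisely where MSCQ enters: under Assumption~\ref{assump-3} one has, for $(x,x^{*})$ with $x$ near $\ox$ and $x^{*}\in\partial\theta(x)$, a multiplier $\lambda\in\partial g(F(x))$ with $x^{*}=\nabla F(x)^{T}\lambda$ and $\|\lambda\|\le\tau(x,x^{*}):=\kappa\ell\|\nabla F(x)\|+\kappa\|x^{*}\|+\ell$ (this is the selection the paper itself uses in Lemma~\ref{lemma:value-graphical-derivative}, via \cite[Theorem~5.4]{Mohammadi2020}); since $\tau(x,x^{*})$ is uniformly bounded for $(x,x^{*})$ ranging over a bounded neighborhood of $(\ox,\ou)$, your prox-regularity computation goes through verbatim with $\ell$ replaced by this uniform bound. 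You correctly identify MSCQ as the source of the bounded multiplier, but you attribute the bound to the wrong mechanism and get the wrong constant. Two smaller remarks: your subdifferential-continuity argument is more roundabout than necessary --- once $x^{k}\in\operatorname{dom}\theta$ and $x^{k}\to\ox$, the relative Lipschitz continuity of $g$ together with continuity of $F$ gives $|g(F(x^{k}))-g(F(\ox))|\le\ell\|F(x^{k})-F(\ox)\|\to0$ directly, with no multipliers at all; and your parenthetical that no threshold $\theta(x)<\theta(\ox)+\varepsilon$ is needed is harmless but moot, since the paper's definition of prox-regularity only requires the inequality on that smaller set.
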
	

Denote $\mathcal{G}(x):=\nabla F(x)^{T}\partial g(F(x))$. For a feasible solution $x$ to problem \eqref{P}, pick $x^{*}\in\mathcal{G}(x)$ and then define the {\em multiplier set} at the pair $(x,x^{*})$ by
\begin{align*}
\Lambda(x,x^{*}):=\left\{\mu\in\partial g(F(x))\,\left|\,x^{*}=\nabla F(x)^{T}\mu\right.\right\}
\end{align*}
and the {\em critical cone} at $(x,x^{*})$ by
\begin{align*}
\mathcal{K}(x,x^{*}):=\left\{d\,\left|\,dg(F(x))(\nabla F(x)d)=\langle x^{*},d\rangle\right.\right\}.
\end{align*}
Given any critical direction $v\in\mathcal{K}(x,x^{*})$, the {\em directional multiplier set} is defined by
\begin{align*}
\Lambda(x,x^{*};v):=\mathop{\operatorname{arg\max}}\limits_{\mu\in\Lambda(x,x^{*})}\left(\langle \mu,\nabla^{2}F(x)(v,v)\rangle+d^{2}g(F(x),\mu)(\nabla F(x)v)\right).
\end{align*}

The next lemma plays an important role in the proof of our main results.

\begin{lemma}\label{lemma:value-graphical-derivative} Let $x$ be a feasible solution to problem \eqref{P}, and let $x^{*}\in\mathcal{G}(x)$. Suppose that the function $g$ satisfies Assumption~{\rm\ref{assump-1}} at $x$ for all $\mu\in\Lambda(x,x^{*})$ and Assumption~{\rm\ref{assump-3}} at $x$. Then we have the domain representation
\begin{align}\label{eq:dom-graphical-G}
\operatorname{dom}D\mathcal{G}(x,x^{*})=\mathcal{K}(x,x^{*}).
\end{align}
Moreover, for any $v\in\mathcal{K}(x,x^{*})$ and $w\in D\mathcal{G}(x,x^{*})(v)$, there exists a multiplier $\mu\in\Lambda(x,x^{*};v)\cap(\tau(x,x^{*})\mathbb{B}_{\mathcal{R}^{m}})$ providing the equality
\begin{align*}
\langle v,w\rangle=\langle \mu,\nabla^{2} F(x)(v,v)\rangle+d^{2}g(F(x),\mu)(\nabla F(x)v),
\end{align*}
where $\tau(x,x^{*}):=\kappa\ell\|\nabla F(x)\|+\kappa\|x^{*}\|+\ell$.
\end{lemma}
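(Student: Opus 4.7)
My plan is to identify $\mathcal{G}$ locally with the subgradient mapping $\partial\theta$ for $\theta := g\circ F$, transfer parabolic regularity and twice epi-differentiability from $g$ to $\theta$ via chain rules under MSCQ, and then combine the graphical-derivative / second-subderivative equality (as in Proposition~\ref{proposition-domain-Gamma-f}) with the classical chain rule for $d^{2}\theta$ in order to bring in the multiplier set $\Lambda(x,x^{*})$.

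\emph{Step 1: reduce to $\partial\theta$.} Under Assumption~\ref{assump-3} and the local Lipschitzness of $g$ around $F(x)$, the first-order subdifferential chain rule (valid under MSCQ) gives $\partial\theta(x')=\nabla F(x')^{T}\partial g(F(x'))=\mathcal{G}(x')$ for all $x'$ near $x$, so that the graphs of $\mathcal{G}$ and $\partial\theta$ coincide locally around $(x,x^{*})$ and $D\mathcal{G}(x,x^{*})=D(\partial\theta)(x,x^{*})$. The companion subderivative chain rule yields $d\theta(x)(v)=dg(F(x))(\nabla F(x)v)$, whence
$$\mathcal{K}(x,x^{*})=\{v\,|\,d\theta(x)(v)=\langle x^{*},v\rangle\}.$$
Proposition~\ref{prop-prox-regular-subdifferential-continuous} ensures that $\theta$ is prox-regular and subdifferentially continuous at $x$ for $x^{*}$.

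\emph{Steps 2--3: transfer the second-order machinery.} Using Assumption~\ref{assump-1} on $g$ at $F(x)$ for every $\mu\in\Lambda(x,x^{*})$, together with MSCQ and the smoothness of $F$, the Mohammadi--Mordukhovich--Sarabi chain rule (see \cite{Mohammadi2021,Mohammadi2022}) transfers parabolic regularity and parabolic epi-differentiability from $g$ to $\theta$. The argument of Proposition~\ref{proposition-domain-Gamma-f} then carries over to $\theta$, giving $\operatorname{dom}d^{2}\theta(x,x^{*})=\mathcal{K}(x,x^{*})$ together with $d^{2}\theta(x,x^{*})(v)=\langle v,w\rangle$ for every $w\in D(\partial\theta)(x,x^{*})(v)$. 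Combined with the identity $\operatorname{dom}D(\partial\theta)(x,x^{*})=\operatorname{dom}d^{2}\theta(x,x^{*})$, which follows from proper twice epi-differentiability under prox-regularity, this delivers \eqref{eq:dom-graphical-G}. Applying next the chain rule for the second subderivative under MSCQ produces, for every $v\in\mathcal{K}(x,x^{*})$,
$$d^{2}\theta(x,x^{*})(v)=\max_{\mu\in\Lambda(x,x^{*})}\bigl\{\langle\mu,\nabla^{2}F(x)(v,v)\rangle+d^{2}g(F(x),\mu)(\nabla F(x)v)\bigr\},$$
with the maximum attained precisely on $\Lambda(x,x^{*};v)$ by definition of the directional multiplier set. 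Pairing this with $d^{2}\theta(x,x^{*})(v)=\langle v,w\rangle$ yields the scalar equality for \emph{some} $\mu\in\Lambda(x,x^{*};v)$.

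\emph{Step 4: the multiplier bound.} It remains to achieve $\|\mu\|\leq\tau(x,x^{*})$. The MSCQ modulus $\kappa$, combined with the bound $\|\partial g(F(x))\|\leq\ell$ from local Lipschitzness, produces, via a standard quantitative metric-subregularity argument, a bounded selection from $\Lambda(x,x^{*})$ with the explicit estimate $\|\mu'\|\leq\kappa\ell\|\nabla F(x)\|+\kappa\|x^{*}\|+\ell$. Since $\Lambda(x,x^{*};v)$ is defined by maximizing a linear functional over the convex set $\Lambda(x,x^{*})$, the same bounded-selection argument can be carried out inside $\Lambda(x,x^{*};v)$, producing $\mu\in\Lambda(x,x^{*};v)\cap\tau(x,x^{*})\mathbb{B}_{\mathcal{R}^{m}}$.

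The main obstacle I expect is Step~4: attaining the maximum in the second-order chain rule \emph{and} keeping the maximizer inside the ball of radius $\tau(x,x^{*})$ simultaneously. Steps~1--3 are careful bookkeeping of already-developed first- and second-order chain rules for composite functions under MSCQ and parabolic regularity, but tying them to the explicit constant $\tau(x,x^{*})=\kappa\ell\|\nabla F(x)\|+\kappa\|x^{*}\|+\ell$ requires a quantitative MSCQ-based estimate tailored to $\mathcal{G}$, and it is here that the interplay between $\kappa$, $\ell$, and the linearity of the objective defining $\Lambda(x,x^{*};v)$ must be exploited.
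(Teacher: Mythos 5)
Your proposal follows essentially the same route as the paper, whose proof is simply an assembly of citations: the first-order chain rule $\partial(g\circ F)(x)=\mathcal{G}(x)$, the identity $\langle v,w\rangle=d^{2}(g\circ F)(x,x^{*})(v)$ from the graphical-derivative/second-subderivative relation, and the second-order chain rule under MSCQ and parabolic regularity, which already comes packaged with the attainment of the maximum over $\Lambda(x,x^{*})\cap(\tau(x,x^{*})\mathbb{B}_{\mathcal{R}^{m}})$, so the Step~4 obstacle you anticipate is resolved by that cited theorem rather than by a separate bounded-selection argument (note also that the objective defining $\Lambda(x,x^{*};v)$ is concave, not linear, in $\mu$ because of the $d^{2}g$ term, though this does not affect the conclusion).
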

\begin{proof}
It follows from \cite[Proposition~4.3]{Mohammadi2020} that $\partial (g\circ F)(x)=\mathcal{G}(x)$.  Combining \cite[Proposition~5.1]{Mohammadi2020} and \cite[Lemma~3.6]{Chieu2021}, we obtain \eqref{eq:dom-graphical-G} and the equality
\begin{align*}
\langle v,w\rangle = d^{2}(g\circ F)(x,x^{*})(v).
\end{align*}
Now the claimed results can be deduced from \cite[Theorem~5.4]{Mohammadi2020}.
\end{proof}

The following theorem establishes a {\em neighborhood characterization} of tilt stability in \eqref{P}.
\begin{theorem}\label{tilt-nei} Let $\bar{x}$ be a feasible solution to problem \eqref{P}, and let $f:=f_{0}+g\circ F$. Suppose that the KKT conditions \eqref{eq:kkt-condition} hold at $\bar{x}$, that the function $g$ satisfies Assumption~{\rm \ref{assump-1}} at $F(x)$ for all $\mu\in\partial g(F(x))$ with $x$ in some neighborhood of $\bar{x}$, and  that Assumption~{\rm\ref{assump-3}} is satisfied at $\bar{x}$. Then $\bar{x}$ is a tilt-stable local minimizer for \eqref{P} if and only if that there exists a constant $\eta>0$ such that for all $(x,u)\in\operatorname{gph}\Psi\cap\mathbb{B}_{\eta}(\bar{x},0)$ we have the strict inequality
\begin{align*}
\sup_{\mu\in\Lambda(x,u-\nabla f_{0}(x))\cap(\tau(x,u-\nabla f_{0}(x))\mathbb{B}_{\mathcal{R}^{m}})}\Big\{\langle v,\nabla^{2}_{xx}L(x,\mu)v\rangle+d^{2}g(F(x),\mu)(\nabla F(x)v)\Big\}>0\\
\;\mbox{whenever }\;v\neq 0\;\mbox{ and }\;v\in\mathcal{K}(x,u-\nabla f_{0}(x)),
\end{align*}
where the multifunction $\Psi$ is defined by
\begin{align}\label{Psi}
\Psi(x):=\Big\{\nabla_{x}L(x,\mu)\,\Big|\,\mu\in\partial g(F(x))\Big\}.
\end{align}
\end{theorem}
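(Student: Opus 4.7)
The plan is to reduce the statement to a known neighborhood characterization of tilt stability for prox-regular, subdifferentially continuous functions, and then to translate the resulting graphical-derivative positivity into the Lagrangian form displayed in the theorem. By Proposition~\ref{prop-prox-regular-subdifferential-continuous}, the objective $f=f_{0}+g\circ F$ is prox-regular and subdifferentially continuous at $\bar{x}$ for every subgradient, so the Poliquin--Rockafellar framework applies. The MSCQ from Assumption~\ref{assump-3}, combined with the limiting chain rule for compositions of smooth maps with convex outer functions, yields $\partial f(x)=\nabla f_{0}(x)+\nabla F(x)^{T}\partial g(F(x))=\Psi(x)$ throughout a neighborhood of $\bar{x}$, so $\gph\Psi$ coincides locally with $\gph\partial f$.

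Under these conditions, tilt-stable minimizers admit the following equivalent neighborhood description from \cite{MordukhovichNghia2015} (see also \cite{Chieu2018,Chieu2021}): there exists $\eta>0$ such that $\langle v,w\rangle>0$ for every $(x,u)\in\gph\partial f\cap\mathbb{B}_{\eta}(\bar{x},0)$, every $v\ne 0$, and every $w\in D(\partial f)(x,u)(v)$. I would invoke this equivalence directly. This reduces the theorem to the claim that, for fixed $(x,u)\in\gph\Psi$ near $(\bar{x},0)$, the condition $\langle v,w\rangle>0$ for all $v\ne 0$ and all $w\in D\Psi(x,u)(v)$ is equivalent to the displayed supremum inequality.

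For this translation I would apply the graphical-derivative sum rule \eqref{eq:coderivative-sum} to $\Psi=\nabla f_{0}+\mathcal{G}$, giving
\[
D\Psi(x,u)(v)=\nabla^{2}f_{0}(x)v+D\mathcal{G}\bigl(x,u-\nabla f_{0}(x)\bigr)(v).
\]
Lemma~\ref{lemma:value-graphical-derivative} then contributes two ingredients: the domain condition $v\in\mathcal{K}(x,u-\nabla f_{0}(x))$ (outside of which $D\mathcal{G}$ is empty and the positivity condition is vacuous), and the value formula that, for every $w'\in D\mathcal{G}(x,u-\nabla f_{0}(x))(v)$, produces a multiplier $\mu\in\Lambda(x,u-\nabla f_{0}(x);v)\cap\bigl(\tau(x,u-\nabla f_{0}(x))\mathbb{B}_{\mathcal{R}^{m}}\bigr)$ with
\[
\langle v,w'\rangle=\langle\mu,\nabla^{2}F(x)(v,v)\rangle+d^{2}g\bigl(F(x),\mu\bigr)\bigl(\nabla F(x)v\bigr).
\]
Combining these with the identity $\langle v,\nabla^{2}f_{0}(x)v\rangle+\langle\mu,\nabla^{2}F(x)(v,v)\rangle=\langle v,\nabla^{2}_{xx}L(x,\mu)v\rangle$, immediate from the definition of $L$, shows that $\langle v,w\rangle$ equals the expression inside the supremum in the theorem; since the $\mu$ provided lies in the directional multiplier set $\Lambda(\cdot,\cdot;v)$, it attains the maximum over all of $\Lambda(x,u-\nabla f_{0}(x))$ by definition, so strict positivity of $\langle v,w\rangle$ for every admissible $w$ is precisely the claimed supremum inequality.

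The main obstacle I anticipate is the first reduction step: invoking the neighborhood characterization of tilt stability via $\langle v,w\rangle>0$ on $\gph D(\partial f)$ requires that prox-regularity, subdifferential continuity, and the chain-rule identity $\partial f=\Psi$ all persist on a full neighborhood of $(\bar{x},0)$ rather than merely at that reference point. The pointwise subdifferential continuity at nearby pairs follows again from Proposition~\ref{prop-prox-regular-subdifferential-continuous}, but the uniform validity of the chain rule needs MSCQ to hold not only at $\bar{x}$ but at nearby feasible points as well -- a standard but non-trivial consequence of the stability of metric subregularity. A minor secondary concern is ensuring that the multiplier bound $\tau(x,u-\nabla f_{0}(x))$ remains uniformly controlled on $\mathbb{B}_{\eta}(\bar{x},0)$, which follows from the continuity of $\nabla F$ and the local boundedness of $\partial g$ near $F(\bar{x})$.
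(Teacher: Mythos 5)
Your proposal is correct and follows essentially the same route as the paper: establish prox-regularity and subdifferential continuity of $f$ via Proposition~\ref{prop-prox-regular-subdifferential-continuous}, use MSCQ and the chain/sum rules to identify $\partial f$ with $\Psi$ and reduce $D(\partial f)$ to $D\mathcal{G}$, apply Lemma~\ref{lemma:value-graphical-derivative} to express $\langle w,v\rangle$ through a directional multiplier, and conclude with the neighborhood graphical-derivative characterization of tilt stability from \cite{Chieu2018}. The only difference is expository: you spell out the persistence of MSCQ and the chain rule on a neighborhood, which the paper dispatches in one sentence.
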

\begin{proof}
Since MSCQ is fulfilled at $\bar{x}$, it also holds around this point. It follows from \cite[Proposition~1.30]{Mordukhovich2018} and \cite[Theorem~3.6]{Mohammadi2022} that $\partial f(x)=\nabla f_{0}(x)+\mathcal{G}(x)$, and that we have $w-\nabla^{2}f_{0}(x)v\in D\mathcal{G}(x,u-\nabla f_{0}(x))(v)$ for all $v\in\mathcal{K}(x,u-\nabla f_{0}(x))$ and $w\in D(\partial f)(x,u)(v)$.
Employing Lemma~\ref{lemma:value-graphical-derivative} yields the existence of a multiplier $\mu\in\Lambda(x,u-\nabla f_{0}(x);v)\cap(\tau(x,u-\nabla f_{0}(x))\mathbb{B}_{\mathcal{R}^{m}})$ such that
\begin{align*}
\langle w,v\rangle&=\langle\nabla^{2}f_{0}(x)v,v\rangle+\langle\mu,\nabla^{2}F(x)(v,v)\rangle+d^{2}g(F(x),\mu)(\nabla F(x)v)\\
&=\langle v,\nabla^{2}_{xx}L(x,\mu)v\rangle+d^{2}g(F(x),\mu)(\nabla F(x)v).
\end{align*}
Combining the latter with \cite[Theorem~3.3]{Chieu2018} completes the proof of the theorem.
\end{proof}

From now on in this section, we derive {\em pointbased no-gap second-order characterizations} of tilt-stable local minimizers of a given modulus $\kappa>0$ in problem \eqref{P}. By ``no-gap", we understand as usual a pair of conditions such that the sufficient condition is different from the necessary simply by changing the strict inequality sign to its nonstrict counterpart.\vspace*{0.05in}

We start with the following {\em sufficient condition} for tilt stability of local minimizers in \eqref{P} under the {\em similar} general assumptions as in the neighborhood characterizations of Theorem~\ref{tilt-nei}. 
\begin{theorem}\label{theorem-tilt-stability-main}
Let $\bar{x}$ be a feasible solution to problem \eqref{P}. In addition to the assumptions of Theorem~{\rm\ref{tilt-nei}}, suppose that the function $g$ satisfies Assumptions~{\rm\ref{assump-1-1}}, and {\rm\ref{assump-2}} at $F(\ox)$ for all $\mu\in\partial g(F(\ox))$. Given $\kappa>0$, suppose that the condition
\begin{align*}
\langle v,\nabla^{2}_{xx}L(\bar{x},\mu)v\rangle+\Gamma_{g}(F(\bar{x}),\mu)(\nabla F(\bar{x})v)>\frac{1}{\kappa}\|v\|^{2},\quad v\neq 0,\,\nabla F(\bar{x})v\in\bigcup_{V\in{\cal J}\operatorname{Prox}_{g}(F(\bar{x})+\mu)}\operatorname{rge}V
\end{align*}  
holds for all the Lagrange multipliers in the critical directions
\begin{align*}
\mu\in\bigcup_{0\neq w\in\mathcal{K}(\bar{x},-\nabla f_{0}(\bar{x}))}\Lambda(\bar{x},-\nabla f_{0}(\bar{x});w)\cap(\tau(\bar{x},-\nabla f_{0}(\bar{x}))\mathbb{B}_{\mathcal{R}^{m}}). 
\end{align*}
Then $\bar{x}$ is a tilt-stable local minimizer with modulus $\kappa$ for \eqref{P}. 
\end{theorem}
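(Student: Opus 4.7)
The plan is to argue by contradiction, combining the neighborhood characterization in Theorem~\ref{tilt-nei} with the $\liminf$-type estimate provided by Theorem~\ref{proposition-inequality-Gamma-f}(ii). The overarching idea is that the pointbased condition on $\Gamma_{g}$ is the pointwise ``lower envelope'' of the neighborhood condition on $d^{2}g$, so a standard approximation/limiting argument transfers a failure of the neighborhood condition at nearby points to a failure of the pointbased condition at $\bar{x}$.

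Concretely, suppose $\bar{x}$ is not tilt-stable with modulus $\kappa$. The modulus-refined version of Theorem~\ref{tilt-nei} (where the strict positivity $>0$ is replaced by $>\kappa^{-1}\|v\|^{2}$, as in \cite[Theorem~3.3]{Chieu2018}) then produces sequences $(x^{k},u^{k})\in\operatorname{gph}\Psi$ with $(x^{k},u^{k})\rightarrow(\bar{x},0)$, nonzero $v^{k}\in\mathcal{K}(x^{k},u^{k}-\nabla f_{0}(x^{k}))$, and multipliers $\mu^{k}\in\Lambda(x^{k},u^{k}-\nabla f_{0}(x^{k});v^{k})\cap(\tau(x^{k},u^{k}-\nabla f_{0}(x^{k}))\mathbb{B}_{\mathcal{R}^{m}})$ attaining the supremum, such that
\begin{align*}
\langle v^{k},\nabla^{2}_{xx}L(x^{k},\mu^{k})v^{k}\rangle+d^{2}g(F(x^{k}),\mu^{k})(\nabla F(x^{k})v^{k})\leq\kappa^{-1}\|v^{k}\|^{2}.
\end{align*}
Exploiting the quadratic homogeneity of each term in $v^{k}$, I rescale to $\|v^{k}\|=1$. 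The $\tau$-bound together with continuity of $\nabla F$ and $\nabla f_{0}$ keeps $\{\mu^{k}\}$ bounded, so a subsequence yields $v^{k}\rightarrow v\neq 0$ and $\mu^{k}\rightarrow\mu$. Outer semicontinuity of $\partial g$ combined with the subdifferential calmness from Assumption~\ref{assump-1-1} places $\mu\in\Lambda(\bar{x},-\nabla f_{0}(\bar{x}))$, while passing to the limit in $v^{k}\in\mathcal{K}(x^{k},u^{k}-\nabla f_{0}(x^{k}))$ via the parabolic epi-differentiability in Assumption~\ref{assump-1} delivers $v\in\mathcal{K}(\bar{x},-\nabla f_{0}(\bar{x}))$. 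In particular, $dg(F(\bar{x}))(\nabla F(\bar{x})v)=\langle\mu,\nabla F(\bar{x})v\rangle$, so Proposition~\ref{proposition-domain-Gamma-f} places $\nabla F(\bar{x})v$ in $\operatorname{dom}\Gamma_{g}(F(\bar{x}),\mu)=\bigcup_{V\in{\cal J}\operatorname{Prox}_{g}(F(\bar{x})+\mu)}\operatorname{rge}V$, and the range requirement of the hypothesis is met.

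Applying Theorem~\ref{proposition-inequality-Gamma-f}(ii) to the sequences $(F(x^{k}),\mu^{k})\rightarrow(F(\bar{x}),\mu)$ and $\nabla F(x^{k})v^{k}\rightarrow\nabla F(\bar{x})v$ gives
\begin{align*}
\Gamma_{g}(F(\bar{x}),\mu)(\nabla F(\bar{x})v)\leq\liminf_{k\rightarrow\infty}d^{2}g(F(x^{k}),\mu^{k})(\nabla F(x^{k})v^{k}).
\end{align*}
Coupling this with joint continuity of $(x,\mu,v)\mapsto\langle v,\nabla^{2}_{xx}L(x,\mu)v\rangle$ and taking $\liminf$ in the displayed violation inequality yields
\begin{align*}
\langle v,\nabla^{2}_{xx}L(\bar{x},\mu)v\rangle+\Gamma_{g}(F(\bar{x}),\mu)(\nabla F(\bar{x})v)\leq\kappa^{-1}\|v\|^{2},
\end{align*}
which should contradict the pointbased hypothesis. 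The main obstacle, and the delicate step of the proof, will be confirming that the limit $\mu$ is not merely an ordinary Lagrange multiplier but a \emph{directional} one at $\bar{x}$, i.e.\ $\mu\in\Lambda(\bar{x},-\nabla f_{0}(\bar{x});v)$, since the argmax defining $\Lambda(\cdot,\cdot;w)$ involves second subderivatives at varying base points and is not a priori upper semicontinuous in $(x,w)$. I expect to dispatch this by using Lemma~\ref{lemma:value-graphical-derivative} along the sequence, which identifies $\mu^{k}$ with the multiplier realizing the graphical-derivative pairing $\langle v^{k},w^{k}\rangle$ for some $w^{k}\in D\mathcal{G}(x^{k},u^{k}-\nabla f_{0}(x^{k}))(v^{k})$, and then invoking the same lemma at $\bar{x}$ together with Theorem~\ref{proposition-inequality-Gamma-f}(ii) to force the limit pair $(v,\mu)$ to realize the directional argmax at $\bar{x}$, thereby placing $\mu$ into the union over $w\neq 0$ in the hypothesis and completing the contradiction.
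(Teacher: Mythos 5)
Your overall strategy coincides with the paper's: argue by contradiction through the neighborhood criterion of \cite[Theorem~3.3]{Chieu2018}, normalize $\|v^{k}\|=1$, extract limits $(\bar v,\bar\mu)$ of the critical directions and directional multipliers supplied by Lemma~\ref{lemma:value-graphical-derivative}, and then use Theorem~\ref{proposition-inequality-Gamma-f}(ii) to pass from $d^{2}g(F(x^{k}),\mu^{k})(\nabla F(x^{k})v^{k})$ to $\Gamma_{g}(F(\bar{x}),\bar\mu)(\nabla F(\bar{x})\bar v)$ and obtain the inequality $\le\kappa^{-1}\|\bar v\|^{2}$. Up to that point your argument is sound (and your route to $\nabla F(\bar x)\bar v\in\operatorname{dom}\Gamma_{g}(F(\bar x),\bar\mu)$ via Proposition~\ref{proposition-domain-Gamma-f} is, if anything, a bit more direct than the paper's outer-limit computation with ranges of $D\operatorname{Prox}_{g}$).

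The genuine gap is exactly the step you flag and then wave away: showing that $\bar\mu$ belongs to $\Lambda(\bar{x},-\nabla f_{0}(\bar{x});w)\cap(\tau(\bar x,-\nabla f_0(\bar x))\mathbb{B}_{\mathcal{R}^{m}})$ for \emph{some} nonzero $w\in\mathcal{K}(\bar x,-\nabla f_0(\bar x))$, without which the hypothesis of the theorem cannot be contradicted. Your plan --- invoke Lemma~\ref{lemma:value-graphical-derivative} at $\bar x$ together with Theorem~\ref{proposition-inequality-Gamma-f}(ii) to ``force'' $(\bar v,\bar\mu)$ to realize the argmax --- does not work: Lemma~\ref{lemma:value-graphical-derivative} only asserts the \emph{existence} of some directional multiplier realizing the pairing $\langle v,w\rangle$ at $\bar x$, not that the limit of the $\mu^{k}$ is that multiplier, and the argmax in $\Lambda(\cdot,\cdot;\cdot)$ is, as you yourself note, not upper semicontinuous. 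The paper's actual argument here is the bulk of the proof and splits into two cases. When $x^{k_j}\ne\bar x$ infinitely often, the witness direction is \emph{not} $\bar v$ but $w=\lim(x^{k_j}-\bar x)/\|x^{k_j}-\bar x\|$; the calmness of $\partial g$ from Assumption~\ref{assump-1-1} is used precisely there to bound $\|\mu^{k_j}-\bar\mu\|/\|x^{k_j}-\bar x\|$, after which one proves $d^{2}g(F(\bar x),\mu)(\nabla F(\bar x)w)=0$ for every $\mu\in\Lambda(\bar x,-\nabla f_0(\bar x))$ by a chain argument propagating the identity along the segment $[\bar\mu,\mu]$ via a finite covering, and finally shows $\langle\mu-\bar\mu,\nabla^{2}F(\bar x)(w,w)\rangle=2\lim\langle\mu-\bar\mu,F(x^{k_j})-F(\bar x)\rangle/\|x^{k_j}-\bar x\|^{2}\le 0$ to conclude $\bar\mu\in\Lambda(\bar x,-\nabla f_0(\bar x);w)$. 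Only in the degenerate case $x^{k_j}\equiv\bar x$ does the direction $\bar v$ itself serve, by a separate (simpler) computation. None of this machinery is present or replaceable by the tools you cite, so the proposal as written does not close the contradiction.
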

\begin{proof}
Employing Proposition~\ref{prop-prox-regular-subdifferential-continuous} and \cite[Theorem~3.3]{Chieu2018}, it suffices to show that there exists a constant $\eta>0$ for which we have 
\begin{align}\label{eq:positivity-graphical-derivative-f}
\langle w,v\rangle\geq \frac{1}{\kappa}\|v\|^{2}\;\mbox{ whenever }\;w\neq 0,\; w\in D(\partial f)(x,u)(v),\; (x,u)\in\operatorname{gph}\Psi\cap\mathbb{B}_{\eta}(\bar{x},0),
\end{align}
where $\Psi$ is defined in \eqref{Psi}. Suppose on the contrary that \eqref{eq:positivity-graphical-derivative-f} fails and then find sequences $\{x^{k}\}$ and $\{u^{k}\}$ with $x^{k}\rightarrow\bar{x}$, $u^{k}\rightarrow 0$ as $k\rightarrow\infty$, as well as $(v^{k},w^{k})\in\mathcal{T}_{\operatorname{gph}\partial f}(x^{k},u^{k})$ satisfying
\begin{align}\label{eq:inequality-graphical-derivative-contradiction}
\langle w^{k},v^{k}\rangle<\frac{1}{\kappa}\|v^{k}\|^{2}		
\end{align}
for all $k$ sufficiently large. Therefore, $v^{k}\neq 0$ and we may assume that $\|v^{k}\|=1$ for all large $k$. Select a subsequence $\{v^{k_{j}}\}$ with $v^{k_{j}}\rightarrow \bar{v}$ and $\|\bar{v}\|=1$. Since $\partial f(x)=\nabla f_{0}(x)+\mathcal{G}(x)$, deduce from Lemma~\ref{lemma:value-graphical-derivative} that $v^{k_{j}}\in\mathcal{K}(x^{k_{j}},u^{k_{j}}-\nabla f_{0}(x^{k_{j}}))$ and $w^{k_{j}}-\nabla^{2}f_{0}(x^{k_{j}})v^{k_{j}}\in D\mathcal{G}(x^{k_{j}},u^{k_{j}}-\nabla f_{0}(x^{k_{j}}))(v^{k_{j}})$. Then it follows from Lemma~\ref{lemma:value-graphical-derivative} that there exists a sequence of $\mu^{k_{j}}\in\Lambda(x^{k_{j}},u^{k_{j}}-\nabla f_{0}(x^{k_{j}});v^{k_{j}})\cap(\tau(x^{k_{j}},u^{k_{j}}-\nabla f_{0}(x^{k_{j}}))\mathbb{B}_{\mathcal{R}^{m}})$ satisfying the equality
\begin{align*}
\langle w^{k_{j}}-\nabla^{2}f_{0}(x^{k_{j}})v^{k_{j}},v^{k_{j}}\rangle=\langle\mu^{k_{j}},\nabla^{2}F(x^{k_{j}})(v^{k_{j}},v^{k_{j}})\rangle+d^{2}g(F(x^{k_{j}}),\mu^{k_{j}})(\nabla F(x^{k_{j}})v^{k_{j}}),
\end{align*}
which can be equivalently written in the form
\begin{align}\label{eq:equality-graphical-derivative-contradiction}
\langle w^{k_{j}},v^{k_{j}}\rangle=\langle v^{k_{j}},\nabla_{xx}^{2}L(x^{k_{j}},\mu^{k_{j}})v^{k_{j}}\rangle+d^{2}g(F(x^{k_{j}}),\mu^{k_{j}})(\nabla F(x^{k_{j}})v^{k_{j}}).
\end{align}
Since $\{\mu^{k_{j}}\}$ is bounded, we find $\bar{\mu}$ with $\mu^{k_{j}}\rightarrow\bar{\mu}$ as $k_{j}\rightarrow\infty$. It is easy to see that $\bar{\mu}\in\Lambda(\bar{x},-\nabla f_{0}(\bar{x}))\cap(\tau(\bar{x},-\nabla f_{0}(\bar{x}))\mathbb{B}_{\mathcal{R}^{m}})$. Using Assumption~\ref{assump-2}, Lemma~\ref{lemma:subseq-range-regular-coderivative} and the representation
\begin{align*}
    D^{*}\operatorname{Prox}_{g}(F(\bar{x})+\bar{\mu})=\mathop{\operatorname{g-Lim}\sup}\limits_{y\rightarrow F(\ox)+\bar{\mu}}\widehat{D}^{*}\operatorname{Prox}_{g}(y)
\end{align*} 
 via the graphical outer limit (see \cite[Definition~5.32]{Rockafellar1998}) brings us to the inclusion
\begin{align*}
 \bigcup_{V\in{\cal J}\operatorname{Prox}_{g}(F(\bar{x})+\bar{\mu})}\operatorname{rge}V=\operatorname{rge}D^{*}\operatorname{Prox}_{g}(F(\bar{x})+\bar{\mu})\subset\Limsup_{k_{j}\rightarrow\infty}\operatorname{rge}D\operatorname{Prox}_{g}(F(x^{k_{j}})+\mu^{k_{j}}).
\end{align*}
On the other hand, it follows from \cite[Theorem~13.57]{Rockafellar1998} that for any $d\in\operatorname{dom}D(\partial g)(F(x^{k_{j}}),\mu^{k_{j}})$ we have $-d\in\operatorname{dom}D^{*}(\partial g)(F(x^{k_{j}}),\mu^{k_{j}})$ and therefore
\begin{align*}
     \operatorname{dom}D(\partial g)(F(x^{k_{j}}),\mu^{k_{j}})\subset-\operatorname{dom}D^{*}(\partial g)(F(x^{k_{j}}),\mu^{k_{j}}),
\end{align*}
which together with \eqref{eq:inverse-graphical-derivative}, \eqref{eq:coderivative-sum} and 
\begin{align*}
    D^{*}\operatorname{Prox}_{g}(F(\bar{x})+\bar{\mu})
    =\mathop{\operatorname{g-Lim}\sup}\limits_{y\rightarrow F(\ox)+\bar{\mu}}D^{*}\operatorname{Prox}_{g}(y) 
\end{align*}
follows that
\begin{align*}
 \Limsup_{k_{j}\rightarrow\infty}\operatorname{rge}D\operatorname{Prox}_{g}(F(x^{k_{j}})+\mu^{k_{j}})&\subset\Limsup_{k_{j}\rightarrow\infty}\operatorname{rge}D^{*}\operatorname{Prox}_{g}(F(x^{k_{j}})+\mu^{k_{j}})\\
 &\subset \operatorname{rge}D^{*}\operatorname{Prox}_{g}(F(\ox)+\bar{\mu}).
\end{align*}
Combining \eqref{eq:inverse-graphical-derivative}, \eqref{eq:coderivative-sum} and \cite[Lemma~5.2]{MordukhovichTangWang2025} allows us to conclude that \begin{align*}
\Limsup_{k_{j}\rightarrow\infty}\left\{d\,\left|\,dg(F(x^{k_{j}}))(d)=\langle\mu^{k_{j}},d\rangle\right.\right\}&=\Limsup_{k_{j}\rightarrow\infty}\operatorname{rge}D\operatorname{Prox}_{g}(F(x^{k_{j}})+\mu^{k_{j}})\\
&=\operatorname{rge}D^{*}\operatorname{Prox}_{g}(F(\bar{x})+\bar{\mu})=\bigcup_{V\in{\cal J}\operatorname{Prox}_{g}(F(\bar{x})+\bar{\mu})}\operatorname{rge}V,\\
\Limsup_{k_{j}\rightarrow\infty}\mathcal{K}(x^{k_{j}},u^{k_{j}}-\nabla f_{0}(x^{k_{j}}))&=\left\{d\,\left|\,\nabla F(\ox)d\in\operatorname{dom}\Gamma_{g}(F(\bar{x}),\bar{\mu})\right.\right\}.
\end{align*} 	
Thus we get $\nabla F(\bar{x})\bar{v}\in\operatorname{dom}\Gamma_{g}(F(\bar{x}),\bar{\mu})$, and applying \eqref{ineq-Gamma-f-0} tells us that
\begin{align*}
\Gamma_{g}(F(\bar{x}),\bar{\mu})(\nabla F(\bar{x})\bar{v})
&\leq\liminf_{k_{j}\rightarrow\infty} d^{2}g(F(x^{k_{j}}),\mu^{k_{j}})(\nabla F(x^{k_{j}})v^{k_{j}}),
\end{align*}  
which being combined with \eqref{eq:inequality-graphical-derivative-contradiction} and \eqref{eq:equality-graphical-derivative-contradiction} yields
\begin{align}\label{eq:contradict-condition}
\langle\bar{v},\nabla^{2}_{xx}L(\bar{x},\bar{\mu})\bar{v}\rangle+\Gamma_{g}(F(\bar{x}),\bar{\mu})(\nabla F(\bar{x})\bar{v})\leq\frac{1}{\kappa}\|v\|^{2}.
\end{align}
Consider further the following {\em two cases}. First we assume that $x^{k_{j}}\neq\bar{x}$ for {\em infinitely many} $k_{j}$. Let $\frac{x^{k_{j}}-\bar{x}}{\|x^{k_{j}}-\bar{x}\|}\rightarrow w$ by passing to a subsequence if necessary. Since $\mu^{k_{j}}\in\partial g(F(x^{k_{j}}))$ and $\bar{\mu}\in\partial g(F(\bar{x}))$, it follows that  $F(\bar{x})\in\partial g^{*}(\bar{\mu})$ and
\begin{align*}
F(x^{k_{j}})=F(\bar{x})+\nabla F(\bar{x})(x^{k_{j}}-\bar{x})+o(\|x^{k_{j}}-\bar{x}\|)\in\partial g^{*}(\mu^{k_{j}}).
\end{align*}
In this way, we arrive at the relationships
\begin{align}\label{eq:space-nablaF-times-v}
\nabla F(\bar{x})w\in\Limsup_{k^{j}\rightarrow\infty}\frac{\partial g^{*}(\mu^{k_{j}})-F(\bar{x})}{\|x^{k_{j}}-\bar{x}\|}=\Limsup_{k^{j}\rightarrow\infty}\frac{\partial g^{*}(\mu^{k_{j}})-F(\bar{x})}{\|\mu^{k_{j}}-\bar{\mu}\|}\frac{\|\mu^{k_{j}}-\bar{\mu}\|}{\|x^{k_{j}}-\bar{x}\|}.
\end{align}
By Assumption~\ref{assump-1-1}, the subgradient mapping $\partial g$ is calm at $F(\bar{x})$ for $\bar{\mu}$. Thus for $k_{j}$ sufficiently large, there exists a constant $\eta>0$ such that
\begin{align}\label{ineq-calmness}
\frac{\|\mu^{k_{j}}-\bar{\mu}\|}{\|x^{k_{j}}-\bar{x}\|}\leq \eta\|\nabla F(\bar{x})\|+o(1).
\end{align}
Suppose that $\nabla F(\bar{x})\neq 0$. Combining \eqref{eq:space-nablaF-times-v} with \eqref{ineq-calmness} gives us the conditions
\begin{align*}
\nabla F(\bar{x})w&\in\eta\|\nabla F(\bar{x})\|\Limsup_{k^{j}\rightarrow\infty}\frac{\partial g^{*}(\mu^{k_{j}})-F(\bar{x})}{\|\mu^{k_{j}}-\bar{\mu}\|}\\
&=\eta\|\nabla F(\bar{x})\|\Limsup_{k^{j}\rightarrow\infty}\frac{\partial g^{*}(\bar{\mu}+(\mu^{k_{j}}-\bar{\mu}))-F(\bar{x})}{\|\mu^{k_{j}}-\bar{\mu}\|}\\
&\subset\eta\|\nabla F(\bar{x})\|\bigcup_{d\in\mathcal{T}_{\partial g(F(\bar{x}))}(\bar{\mu})}D(\partial g^{*})(\bar{\mu},F(\bar{x}))(d)\\
&=\bigcup_{d\in\mathcal{T}_{\partial g(F(\bar{x}))}(\bar{\mu})}D(\partial g^{*})(\bar{\mu},F(\bar{x}))(d).
\end{align*}
Since $\mathcal{T}_{\partial g(F(\bar{x}))}(\bar{\mu})=\left(\mathcal{N}_{\partial g(F(\bar{x}))}(\bar{\mu})\right)^{\circ}=\left\{d\,\left|\,dg(F(\bar{x}))(d)=\langle\bar{\mu},d\rangle\right.\right\}^{\circ}$, it follows from
Propositions~\ref{lemma:reformulate-dom-df(x,u)-polar} and \ref{proposition-domain-Gamma-f} that for any $d\in\mathcal{T}_{\partial g(F(\bar{x}))}(\bar{\mu})$ with $\nabla F(\bar{x})w\in D(\partial g^{*})(\bar{\mu},F(\bar{x}))(d)$, we have
\begin{align}\label{eq:nablaF-times-v-exact-space}
\begin{aligned}
&0\in D(\partial g^{*})(\bar{\mu},F(\bar{x}))(d),\quad d^{2}g^{*}(\bar{\mu},F(\bar{x}))(d)=d^{2}g(F(\bar{x}),\bar{\mu})(\nabla F(\bar{x})w)=0,\\
&\nabla F(\bar{x})w\in\operatorname{dom}D(\partial g)(F(\bar{x}),\bar{\mu})=\left\{d\,\left|\,dg(F(\bar{x}))(d)=\langle\bar{\mu},d\rangle\right.\right\}.
\end{aligned}
\end{align}
This readily brings us to the inclusion 
\begin{align}\label{eq:value-normal-partial-g}
\lim_{k_{j}\rightarrow\infty}\frac{F(x^{k_{j}})-F(\bar{x})}{\|x^{k_{j}}-\bar{x}\|}\in\left\{d\,\left|\,dg(F(\bar{x}))(d)=\langle\bar{\mu},d\rangle\right.\right\}=\mathcal{N}_{\partial g(F(\bar{x}))}(\bar{\mu})
\end{align} 
which implies in turn that $w\in\mathcal{K}(\bar{x},-\nabla f_{0}(\bar{x}))$. For any $\bar{\mu}\neq\mu\in\Lambda(\bar{x},-\nabla f_{0}(\bar{x}))$, we also get
\begin{equation*}
\nabla F(\bar{x})^{T}\mu=\nabla F(\bar{x})^{T}\bar{\mu}=-\nabla f_{0}(\bar{x}),\ \;\nabla F(\bar{x})w\in\left\{d\,\left|\,dg(F(\bar{x}))(d)=\langle\mu,d\rangle\right.\right\}=\operatorname{dom}d^{2}g(F(\bar{x}),\mu).
\end{equation*}

To show further that $d^{2}g(F(\bar{x}),\tilde{\mu})(\nabla F(\bar{x})w)=0$ for any $\tilde{\mu}=\bar{\mu}+\beta(\mu-\bar{\mu})$  with $\beta\in[0,1]$, deduce from
Proposition~\ref{proposition-domain-Gamma-f} that 
\begin{align*}
&\mu-\bar{\mu}\in\mathcal{T}_{\partial g(F(\bar{x}))}(\bar{\mu}),\quad d^{2}g^{*}(\bar{\mu},F(\bar{x}))(\mu-\bar{\mu})=0,\\ 
&\frac{1}{2}d^{2}g^{*}(\bar{\mu},F(\bar{x}))(\mu-\bar{\mu}) + \frac{1}{2}d^{2}g(F(\bar{x}),\bar{\mu})(\nabla F(\bar{x})w)=\langle\mu-\bar{\mu},\nabla F(\bar{x})w\rangle=0.
\end{align*}
This allows us to conclude by \cite[Theorem~23.5]{Rockafellar1970} that
\begin{align*}
\mu-\bar{\mu}\in D(\partial g)(F(\bar{x}),\bar{\mu})(\nabla F(\bar{x})w)=\Limsup_{{w'\rightarrow\nabla F(\bar{x})w}\atop{t\downarrow 0}}\frac{\partial g(F(\bar{x})+tw')-\bar{\mu}}{t}.
\end{align*}
Therefore, there exists $\beta_{1}>0$ sufficiently small such that
\begin{align*}
&\bar{\mu}+\bar{\beta}_{1}(\mu-\bar{\mu})\in\Limsup_{{w'\rightarrow\nabla F(\bar{x})w}\atop{t\downarrow 0}}\partial g(F(\bar{x})+tw')\;\mbox{ for all }\;\bar{\beta}_{1}\in(0,\beta_{1}).
\end{align*}
More specifically, select $\tilde{\beta}_{1}\in(0,\beta_{1})$ ensuring that
\begin{align*}
(\bar{\mu}+\tilde{\beta}_{1}(\mu-\bar{\mu}))+\tau(\bar{\mu}-\mu)\in\Limsup_{{w'\rightarrow\nabla F(\bar{x})w}\atop{t\downarrow 0}}\partial g(F(\bar{x})+tw')\;\mbox{ whenever }\;\tau\in(0,\tilde{\beta}_{1})
\end{align*} 
and then obtain for all $\tilde{\beta}_{1}\in(0,\beta_{1})$ that
\begin{align*}
\bar{\mu}-\mu\in D(\partial g)(F(\bar{x}),\bar{\mu}+\tilde{\beta}_{1}(\mu-\bar{\mu}))(\nabla F(\bar{x})w)=\Limsup_{{w'\rightarrow\nabla F(\bar{x})w}\atop{t\downarrow 0}}\frac{\partial g(F(\bar{x})+tw')-(\bar{\mu}+\tilde{\beta}_{1}(\mu-\bar{\mu}))}{t}.
\end{align*} 
Combining this with \cite[Lemma~3.6]{Chieu2021} tells us that 
\begin{align*}
d^{2}g(F(\bar{x}),\bar{\mu}+\tilde{\beta}_{1}(\mu-\bar{\mu}))(\nabla F(\bar{x})w)=\langle\bar{\mu}-\mu,\nabla F(\bar{x})w\rangle=0.
\end{align*}
Similarly, we can find small positive numbers $\beta_{2},\ldots,\beta_{k}$ and $\tilde{\beta}_{i}\in(0,\beta_{i})$ for which
\begin{align*}
d^{2}g\left(F(\bar{x}),\bar{\mu}+\sum_{i=1}^{k}\tilde{\beta}_{i}(\mu-\bar{\mu})\right)(\nabla F(\bar{x})w)=\langle\bar{\mu}-\mu,\nabla F(\bar{x})w\rangle=0.
\end{align*}
Letting $i_{0}:=0$ and $\tilde{\beta}_{0}:=0$ and using the finite covering of compact intervals allow us to find a finite integer $K$ and $1=i_{1}<\ldots<i_{K}$ such that 
\begin{align*}
[\tilde{\beta}_{1},1]\subset\bigcup_{t=1}^{K}\left(\sum_{j=0}^{t-1}\tilde{\beta}_{i_{j}},\sum_{j=0}^{t-1}\tilde{\beta}_{i_{j}}+\beta_{i_{t}}\right)\;\mbox{ with }\;\sum\limits_{j=1}^{K}\tilde{\beta}_{i_{j}}=1\;\mbox{ and}
\end{align*}
\begin{align}\label{eq:d2g-Fx-mu}
d^{2}g(F(\bar{x}),\mu)(\nabla F(\bar{x})w)=\langle\bar{\mu}-\mu,\nabla F(\bar{x})w\rangle=0.
\end{align}
By \eqref{eq:nablaF-times-v-exact-space}, \eqref{eq:value-normal-partial-g}, and \eqref{eq:d2g-Fx-mu}, this implies that
\begin{align*}
&\langle\mu,\nabla^{2}F(\bar{x})(w,w)\rangle+d^{2}g(F(\bar{x}),\mu)(\nabla F(\bar{x})w)-\left(\langle\bar{\mu},\nabla^{2}F(\bar{x})(w,w)\rangle+d^{2}g(F(\bar{x}),\bar{\mu})(\nabla F(\bar{x})w)\right)\\
&=\langle\mu-\bar{\mu},\nabla^{2}F(\bar{x})(w,w)\rangle=\lim_{k_{j}\rightarrow\infty}\frac{\left\langle x^{k_{j}}-\bar{x},\nabla^{2}\langle\mu-\bar{\mu},F\rangle(\bar{x})(x^{k_{j}}-\bar{x})\right\rangle}{\|x^{k_{j}}-\bar{x}\|^{2}}\\
&=2\lim_{k_{j}\rightarrow\infty}\frac{\langle\mu-\bar{\mu},F(x^{k_{j}})-F(\bar{x})\rangle}{\|x^{k_{j}}-\bar{x}\|^{2}}\leq 0,
\end{align*}
 which yields $\bar{\mu}\in\Lambda(\bar{x},-\nabla f_{0}(\bar{x});w)\cap(\tau(\bar{x},-\nabla f_{0}(\bar{x}))\mathbb{B}_{\mathcal{R}^{m}})$. When $\nabla F(\bar{x})=0$, the same inclusion relation can be trivially obtained.

 Now we consider the remaining case where there exist only {\em finitely many} $k$ such that $x^{k_{j}}\neq\bar{x}$. In this case, $x^{k_{j}}=\bar{x}$ when $k_{j}$ is sufficiently large. Since $v^{k_{j}}\in\mathcal{K}(\bar{x},u^{k_{j}}-\nabla f_{0}(\bar{x}))$, it follows that $\bar{v}\in\mathcal{K}(\bar{x},-\nabla f_{0}(\bar{x}))$. By Proposition~\ref{proposition-domain-Gamma-f}, the function $g$ is properly twice epi-differentiable at $F(\bar{x})$ for all $\tilde{\mu}\in\partial g(F(\ox))$ with the domain $\operatorname{dom}d^{2}g(F(\bar{x}),\tilde{\mu})$ described as $\left\{d\, \left|\, dg(F(\bar{x})(d)=\langle \tilde{\mu},d\rangle\right.\right\}$.Since the equality $\langle v^{k_{j}},\nabla F(\bar{x})^{T}\mu\rangle=\langle v^{k_{j}},\nabla F(\bar{x})^{T}\bar{\mu}\rangle$ holds for all $\mu\in\Lambda(\bar{x},-\nabla f_{0}(\bar{x}))$, we conclude that
\begin{align}\label{eq:d2g-equality-different-mu}
 \begin{array}{ll}
 d^{2}g(F(\bar{x}),\bar{\mu})(\nabla 
 \disp F(\bar{x})\bar{v})&=\disp\lim_{k_{j}\rightarrow\infty}\disp\frac{g(F(\bar{x})+\tau\nabla F(\bar{x})v^{k_{j}})-g(F(\bar{x}))-\tau\langle \bar{\mu},\nabla F(\bar{x})v^{k_{j}}\rangle}{\frac{1}{2}\tau^{2}}\\
 &=d^{2}g(F(\bar{x}),\mu)(\nabla F(\bar{x})\bar{v})\;\mbox{ whenever }\;\mu\in\Lambda(\bar{x},-\nabla f_{0}(\bar{x})).
\end{array}
\end{align} 
Furthermore, it follows that for all $\mu\in\Lambda(\bar{x},-\nabla f_{0}(\bar{x}))$ we get
 \begin{align}\label{eq:inequality-different-mu}
\begin{array}{ll}
\disp\langle\mu-\bar{\mu},\nabla^{2}F(\bar{x})(\bar{v},\bar{v})\rangle&=\disp 2\lim_{t\downarrow0}\left\langle\mu-\bar{\mu},\frac{F(\bar{x}+t\bar{v})-F(\bar{x})}{t^{2}}\right\rangle\\
 &=\disp2\lim_{t\downarrow0}\lim_{k_{j}\rightarrow\infty}\left\langle\mu-\mu^{k_{j}},\frac{F(\bar{x}+tv^{k_{j}})-F(\bar{x})}{t^{2}}\right\rangle\le 0,
\end{array}
\end{align}
 where the last inequality is deduced from the fact that $\nabla F(\bar{x})v^{k_{j}}\in\mathcal{N}_{\partial
 g(F(\bar{x}))}(\mu^{k_{j}})$. Combining \eqref{eq:d2g-equality-different-mu} and \eqref{eq:inequality-different-mu} leads us to the inclusion $\bar{\mu}\in\Lambda(\bar{x},-\nabla f_{0}(\bar{x});\bar{v})\cap(\tau(\bar{x},-\nabla f_{0}(\bar{x}))\mathbb{B}_{\mathcal{R}^{m}})$. It finally follows from the above discussion and the second-order condition for $\bar{v}$ that 
\begin{align*}
\langle\bar{v},\nabla^{2}_{xx}L(\bar{x},\bar{\mu})\bar{v}\rangle+\Gamma_{g}(F(\bar{x}),\bar{\mu})(\nabla F(\bar{x})\bar{v})>\frac{1}{\kappa}\|\bar{v}\|^{2},
\end{align*}
which contradicts \eqref{eq:contradict-condition} and thus completes the proof of the theorem.
\end{proof}

As we see, Theorem~\ref{theorem-tilt-stability-main} establishes a pointbased second-order sufficient condition for tilt stability in the general composite problem \eqref{P} under the fulfillment of MSCQ. When specialized to the case where $f$ is the indicator function of the (polyhedral) positive orthant, the term $\Gamma_{g}(F(\bar{x}),\mu)(\nabla F(\bar{x})\bar{v})$ {\em vanishes}. In this setting, Theorem~\ref{theorem-tilt-stability-main} reduces to the sufficient condition for tilt stability in nonlinear programming problems derived in  \cite[Theorem~6.1]{GfrererMordukhovich2015}.\vspace*{0.05in}

The pointbased second-order sufficient condition for tilt stability {\em without the modulus specification} is given in the following corollary.

\begin{corollary}
Let $\bar{x}$ be a feasible solution to \eqref{P} with $f=f_{0}+g\circ F$. Suppose that $\ox$ solves the KKT system \eqref{eq:kkt-condition} and that $g$ satisfies the assumptions of Theorem~{\rm\ref{theorem-tilt-stability-main}}. If the condition
\begin{align*}
\langle v,\nabla^{2}_{xx}L(\bar{x},\mu)v\rangle+\Gamma_{g}(F(\bar{x}),\mu)(\nabla F(\bar{x})v)>0,\quad v\neq 0,\,\nabla F(\bar{x})v\in\bigcup_{V\in{\cal J}\operatorname{Prox}_{g}(F(\bar{x})+\mu)}\operatorname{rge}V
\end{align*}  
holds for all the Lagrange multipliers in critical directions
\begin{align*}
\mu\in\bigcup_{0\neq w\in\mathcal{K}(\bar{x},-\nabla f_{0}(\bar{x}))}\Lambda(\bar{x},-\nabla f_{0}(\bar{x});w)\cap(\tau(\bar{x},-\nabla f_{0}(\bar{x}))\mathbb{B}_{\mathcal{R}^{m}}),
\end{align*}
then $\bar{x}$ is a tilt-stable local minimizer for problem \eqref{P}.
\end{corollary}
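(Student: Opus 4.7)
The plan is to reduce the corollary to Theorem~\ref{theorem-tilt-stability-main} by extracting a quantitative modulus $\kappa>0$ from the qualitative strict inequality via a combined homogeneity and compactness argument. Set
\begin{align*}
h(v,\mu):=\langle v,\nabla^{2}_{xx}L(\bar{x},\mu)v\rangle+\Gamma_{g}(F(\bar{x}),\mu)(\nabla F(\bar{x})v).
\end{align*}
The quadratic term is positively homogeneous of degree two in $v$; moreover, Proposition~\ref{proposition-reformulate-Gamma-f} gives $\Gamma_{g}(F(\bar{x}),\mu)(v')=\min_{z\in D^{*}(\partial g)(F(\bar{x}),\mu)(v')}\langle v',z\rangle$, and combined with the positive homogeneity of the limiting coderivative this yields $\Gamma_{g}(F(\bar{x}),\mu)(tv')=t^{2}\Gamma_{g}(F(\bar{x}),\mu)(v')$ for $t>0$. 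Hence the hypothesis $h(v,\mu)>0$ on the stated domain is equivalent to its restriction to $\|v\|=1$. Denote $\mathcal{M}:=\bigcup_{0\neq w\in\mathcal{K}(\bar{x},-\nabla f_{0}(\bar{x}))}\Lambda(\bar{x},-\nabla f_{0}(\bar{x});w)\cap(\tau(\bar{x},-\nabla f_{0}(\bar{x}))\mathbb{B}_{\mathcal{R}^{m}})$ and let $\alpha$ be the infimum of $h(v,\mu)$ over all pairs $(v,\mu)$ with $\|v\|=1$, $\mu\in\mathcal{M}$, and $\nabla F(\bar{x})v\in\bigcup_{V\in{\cal J}\operatorname{Prox}_{g}(F(\bar{x})+\mu)}\operatorname{rge}V$.

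The central step is to prove $\alpha>0$ by contradiction. If $\alpha=0$, extract a minimizing sequence $(v^{k},\mu^{k})$ together with associated critical directions $w^{k}\in\mathcal{K}(\bar{x},-\nabla f_{0}(\bar{x}))\setminus\{0\}$ normalized to $\|w^{k}\|=1$. Compactness of the unit sphere, of the ball $\tau(\bar{x},-\nabla f_{0}(\bar{x}))\mathbb{B}_{\mathcal{R}^{m}}$, and of the unit sphere in the closed cone $\mathcal{K}(\bar{x},-\nabla f_{0}(\bar{x}))$ deliver, along a subsequence, a limit $(v^{k},\mu^{k},w^{k})\to(\bar{v},\bar{\mu},\bar{w})$ with $\|\bar{v}\|=\|\bar{w}\|=1$. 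Closedness of $\operatorname{gph}\partial g$ yields $\bar{\mu}\in\Lambda(\bar{x},-\nabla f_{0}(\bar{x}))\cap(\tau(\bar{x},-\nabla f_{0}(\bar{x}))\mathbb{B}_{\mathcal{R}^{m}})$, while the arg-max condition $\bar{\mu}\in\Lambda(\bar{x},-\nabla f_{0}(\bar{x});\bar{w})$ follows from continuity of $\mu\mapsto\langle\mu,\nabla^{2}F(\bar{x})(\bar{w},\bar{w})\rangle$ and the appropriate semicontinuity of $\mu\mapsto d^{2}g(F(\bar{x}),\mu)(\nabla F(\bar{x})\bar{w})$ along the convergent triple, exactly as in the multiplier-passage step of Theorem~\ref{theorem-tilt-stability-main}. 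The range-set inclusion $\nabla F(\bar{x})\bar{v}\in\bigcup_{V\in{\cal J}\operatorname{Prox}_{g}(F(\bar{x})+\bar{\mu})}\operatorname{rge}V$ is delivered by the equality $\bigcup_{V\in{\cal J}\operatorname{Prox}_{g}(F(\bar{x})+\bar{\mu})}\operatorname{rge}V=\Limsup_{k}\operatorname{rge}D\operatorname{Prox}_{g}(F(\bar{x})+\mu^{k})$ already established inside the proof of Theorem~\ref{theorem-tilt-stability-main}. Finally, \eqref{ineq-Gamma-f-0} gives
\begin{align*}
\Gamma_{g}(F(\bar{x}),\bar{\mu})(\nabla F(\bar{x})\bar{v})\leq\liminf_{k\to\infty}d^{2}g(F(\bar{x}),\mu^{k})(\nabla F(\bar{x})v^{k}),
\end{align*}
so the continuity of $\nabla^{2}_{xx}L$ yields $h(\bar{v},\bar{\mu})\leq 0$, contradicting the standing strict positivity of $h$ on the constraint set.

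With $\alpha>0$ in hand, I would finish by selecting any $\kappa>0$ with $1/\kappa<\alpha$; then by the degree-two homogeneity $h(v,\mu)\geq\alpha\|v\|^{2}>\|v\|^{2}/\kappa$ on the hypothesis set of Theorem~\ref{theorem-tilt-stability-main}, and that theorem yields tilt stability of $\bar{x}$ with modulus $\kappa$. The main obstacle I anticipate is the limit passage for the arg-max condition defining the directional multiplier set $\Lambda(\bar{x},-\nabla f_{0}(\bar{x});\bar{w})$, since the maximized objective depends jointly on $\mu$ and $w$ through a second subderivative that enjoys only one-sided semicontinuity in its arguments; controlling this along the convergent sequence requires the coderivative representations and outer-semicontinuity results developed in Section~\ref{sec:2nd-order}, applied to the direction $\bar{w}$ in exact analogy with the corresponding step carried out in the proof of Theorem~\ref{theorem-tilt-stability-main} for the direction $\bar{v}$.
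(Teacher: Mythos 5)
Your overall strategy---use the degree-two positive homogeneity of $v\mapsto\Gamma_{g}(F(\bar{x}),\mu)(\nabla F(\bar{x})v)$ to restrict to $\|v\|=1$, show by a compactness/contradiction argument that the infimum $\alpha$ of the quadratic form over the constraint set is positive, and then invoke Theorem~\ref{theorem-tilt-stability-main} with any $\kappa>1/\alpha$---is a legitimate route and genuinely different from the intended one (the corollary is meant to follow by rerunning the proof of Theorem~\ref{theorem-tilt-stability-main} with $\frac{1}{\kappa}\|v\|^{2}$ replaced by $0$, using the modulus-free form of the neighborhood criterion as in Theorem~\ref{tilt-nei}). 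The homogeneity observation is correct, and the limit passages for $\bar{\mu}\in\Lambda(\bar{x},-\nabla f_{0}(\bar{x});\bar{w})$ and for the range-set membership of $\nabla F(\bar{x})\bar{v}$ can indeed be borrowed from the corresponding steps inside the proof of Theorem~\ref{theorem-tilt-stability-main}.

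However, the decisive step of your contradiction argument fails as written. Along your minimizing sequence you control $h(v^{k},\mu^{k})=\langle v^{k},\nabla^{2}_{xx}L(\bar{x},\mu^{k})v^{k}\rangle+\Gamma_{g}(F(\bar{x}),\mu^{k})(\nabla F(\bar{x})v^{k})\rightarrow\alpha\le 0$, so to conclude $h(\bar{v},\bar{\mu})\le 0$ you need lower semicontinuity of $\Gamma_{g}$ along the sequence, namely $\Gamma_{g}(F(\bar{x}),\bar{\mu})(\nabla F(\bar{x})\bar{v})\le\liminf_{k}\Gamma_{g}(F(\bar{x}),\mu^{k})(\nabla F(\bar{x})v^{k})$. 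What \eqref{ineq-Gamma-f-0} gives you is instead an upper bound by $\liminf_{k}d^{2}g(F(\bar{x}),\mu^{k})(\nabla F(\bar{x})v^{k})$; since $\Gamma_{g}(F(\bar{x}),\mu^{k})\le d^{2}g(F(\bar{x}),\mu^{k})$ pointwise, that quantity dominates $\liminf_{k}\Gamma_{g}(F(\bar{x}),\mu^{k})(\nabla F(\bar{x})v^{k})=\alpha-\langle\bar{v},\nabla^{2}_{xx}L(\bar{x},\bar{\mu})\bar{v}\rangle$, so the resulting upper bound on $h(\bar{v},\bar{\mu})$ is only $\ge\alpha$ and no contradiction with $h(\bar{v},\bar{\mu})>0$ is obtained. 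The inequality you actually need is true but must be derived separately, e.g.\ by a diagonal extraction combining assertions (i) and (ii) of Theorem~\ref{proposition-inequality-Gamma-f}: part (i) (applicable at $(F(\bar{x}),\mu^{k})$ because $\mu^{k}\in\partial g(F(\bar{x}))$ and Assumption~\ref{assump-2} is imposed there) produces points $(y^{k},\nu^{k})\xrightarrow{\operatorname{gph}\partial g}(F(\bar{x}),\mu^{k})$ and nearby directions at which $d^{2}g$ is within $1/k$ of $\Gamma_{g}(F(\bar{x}),\mu^{k})(\nabla F(\bar{x})v^{k})$, and part (ii) applied to the diagonal sequence then yields the required lower semicontinuity. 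Without this repair the proof that $\alpha>0$ is incomplete; the paper's route avoids the issue entirely because the quantities arising along its contradiction sequences are second subderivatives rather than values of $\Gamma_{g}$, so \eqref{ineq-Gamma-f-0} applies directly.
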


%To derive the no-gap pointbased second-order {\em necessary condition} for tilt-stable local minimizers of problem \eqref{P}, we first introduce the following new notion.

%\begin{definition}
%Let $\ox$ be a feasible solution to problem \eqref{P}. We say that $\ox$ {\sc nondegenerates in the critical direction} $v\in\mathcal{R}^{m}$ if the set of Lagrange multipliers $\Lambda(\ox,-\nabla f_{0}(\ox);v)$ at $(\ox,-\nabla f_{0}(\ox))$ in the direction $v$ is a singleton.
%\end{definition}

Here is the aforementioned pointbased necessary condition for tilt stability in \eqref{P}.

\begin{theorem}\label{thm-main-necessary-cond-tilt-stability}
Let $\bar{x}$ be a tilt-stable local minimizer with modulus $\kappa>0$ for problem \eqref{P}. In addition to the assumptions of Theorem~{\rm\ref{tilt-nei}},  and Assumption~{\rm\ref{assump-2}} at $F(\ox)$ for all $\mu\in\partial g(F(\ox))$, suppose that for any nonzero critical direction 
$w\in\mathcal{K}(\ox,-\nabla f_{0}(\ox))$, $\mu\in\Lambda(\ox,-\nabla f_{0}(\ox);w)\cap(\tau(\bar{x},-\nabla f_{0}(\bar{x}))\mathbb{B}_{\mathcal{R}^{m}})$ and $\nabla F(\ox)v\in\operatorname{dom}\Gamma_{g}(F(\ox),\mu)$, and there exist $(x^{k},\mu^{k})\rightarrow(\ox,\mu)$ with $\mu^{k}\in\partial g(F(x^{k}))$ and $z^{k}\rightarrow(\overline{W}^{\dagger}-I)\nabla F(\ox)v$, $v^{k}\rightarrow v$ with $(z^{k},-\nabla F(x^{k})v^{k})\in\widehat{\mathcal{N}}_{\operatorname{gph}\partial g}(F(x^{k}),\mu^{k})$ satisfying that $\Lambda(x^{k},\nabla F(x^{k})^{T}\mu^{k})$ is a singleton.
Then we have the pointbased second-order necessary condition for tilt stability
\begin{align*}
\langle v,\nabla^{2}_{xx}L(\bar{x},\mu)v\rangle+\Gamma_{g}(F(\bar{x}),\mu)(\nabla F(\bar{x})v)\geq\frac{1}{\kappa}\|v\|^{2},\quad v\neq 0,\;\nabla F(\bar{x})v\in\bigcup_{V\in{\cal J}\operatorname{Prox}_{g}(F(\bar{x})+\mu)}\operatorname{rge}V
\end{align*}  
with modulus $\kappa$ that holds for all the Lagrange multipliers in critical directions
\begin{align*}
\mu\in\bigcup_{0\neq w\in\mathcal{K}(\bar{x},-\nabla f_{0}(\bar{x}))}\Lambda(\bar{x},-\nabla f_{0}(\bar{x});w)\cap(\tau(\bar{x},-\nabla f_{0}(\bar{x}))\mathbb{B}_{\mathcal{R}^{m}}).
\end{align*}
\end{theorem}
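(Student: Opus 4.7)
The plan is to invoke the standard pointbased necessary condition for tilt stability with modulus $\kappa$, namely $\langle z,v\rangle\geq\kappa^{-1}\|v\|^{2}$ for every $v\neq 0$ and every $z\in D^{*}(\partial f)(\bar{x},0)(v)$; this is the quantitative Poliquin--Rockafellar characterization that underlies \cite[Theorem~3.3]{Chieu2018} cited in Theorem~\ref{tilt-nei}, available here because $f=f_{0}+g\circ F$ is prox-regular and subdifferentially continuous at $\bar{x}$ for $0$ by Proposition~\ref{prop-prox-regular-subdifferential-continuous}. My strategy is to use the auxiliary hypothesis as a \emph{constructive device} producing an explicit element of $D^{*}(\partial f)(\bar{x},0)(v)$ whose pairing with $v$ realizes the left-hand side of the claimed inequality.

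Concretely, fix a nonzero critical direction $v$ and a multiplier $\mu$ as in the conclusion (so $\nabla F(\bar{x})v\in\operatorname{rge}\overline{W}$), and extract from the additional hypothesis sequences $(x^{k},\mu^{k})\to(\bar{x},\mu)$, $v^{k}\to v$, and $z^{k}\to(\overline{W}^{\dagger}-I)\nabla F(\bar{x})v$ with $\mu^{k}\in\partial g(F(x^{k}))$, $z^{k}\in\widehat{D}^{*}(\partial g)(F(x^{k}),\mu^{k})(\nabla F(x^{k})v^{k})$, and $\Lambda(x^{k},\nabla F(x^{k})^{T}\mu^{k})=\{\mu^{k}\}$. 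Setting $u^{k}:=\nabla f_{0}(x^{k})+\nabla F(x^{k})^{T}\mu^{k}$, the sum-chain rule $\partial f=\nabla f_{0}+\mathcal{G}$ from \cite[Theorem~3.6]{Mohammadi2022} yields $u^{k}\in\partial f(x^{k})$, and $u^{k}\to 0$ thanks to the KKT equality $\nabla f_{0}(\bar{x})+\nabla F(\bar{x})^{T}\mu=0$ built into $\mu\in\Lambda(\bar{x},-\nabla f_{0}(\bar{x}))$. Next I combine the regular-normal sum rule displayed before \eqref{eq:coderivative-sum} (applied to $\partial f=\nabla f_{0}+\mathcal{G}$) with the chain rule for $\widehat{D}^{*}\mathcal{G}$ that holds because $\Lambda(x^{k},\nabla F(x^{k})^{T}\mu^{k})$ is a singleton---this identifies $\mathcal{G}$ locally near $x^{k}$ with the single-valued smooth selection $x\mapsto\nabla F(x)^{T}\mu(x)$ built from the unique multiplier, so that its regular coderivative transforms cleanly---to obtain
\begin{align*}
z_{k}^{*}\,:=\,\nabla^{2}_{xx}L(x^{k},\mu^{k})v^{k}+\nabla F(x^{k})^{T}z^{k}\;\in\;\widehat{D}^{*}(\partial f)(x^{k},u^{k})(v^{k}).
\end{align*}

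Passing to the limit, $z_{k}^{*}\to z:=\nabla^{2}_{xx}L(\bar{x},\mu)v+\nabla F(\bar{x})^{T}(\overline{W}^{\dagger}-I)\nabla F(\bar{x})v$, so the defining formula \eqref{lim-cod} of the limiting coderivative places $z\in D^{*}(\partial f)(\bar{x},0)(v)$. Applying the tilt-stability inequality to this $z$ and expanding the inner product yields
\begin{align*}
\frac{1}{\kappa}\|v\|^{2}\,\leq\,\langle v,z\rangle\,=\,\langle v,\nabla^{2}_{xx}L(\bar{x},\mu)v\rangle+\big\langle\nabla F(\bar{x})v,(\overline{W}^{\dagger}-I)\nabla F(\bar{x})v\big\rangle,
\end{align*}
and the second term on the right coincides with $\Gamma_{g}(F(\bar{x}),\mu)(\nabla F(\bar{x})v)$ by the defining equalities for $\overline{W}$ and $\bar{z}$ in Assumption~\ref{assump-2}. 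The domain requirement $\nabla F(\bar{x})v\in\bigcup_{V\in\mathcal{J}\operatorname{Prox}_{g}(F(\bar{x})+\mu)}\operatorname{rge}V=\operatorname{rge}\overline{W}$ is automatic from the very existence of the prescribed limit $z^{k}\to(\overline{W}^{\dagger}-I)\nabla F(\bar{x})v$.

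The decisive obstacle is the chain-rule step producing $z_{k}^{*}\in\widehat{D}^{*}(\partial f)(x^{k},u^{k})(v^{k})$: converting a regular normal to $\operatorname{gph}\partial g$ into one to $\operatorname{gph}\partial f$ through the composition $\mathcal{G}(x)=\nabla F(x)^{T}\partial g(F(x))$ demands fine control of the graph of $\mathcal{G}$, and the singleton-multiplier hypothesis is precisely what collapses $\mathcal{G}$ locally to a graph over which the regular-normal sum rule in \eqref{eq:coderivative-sum} applies without fuzzy remainders. Once this transformation is in hand, the remaining ingredients---the pointbased necessary tilt-stability inequality and the routine limit passage supported by the prescribed convergences and by Assumption~\ref{assump-2}---fall into place using the machinery of Sections~\ref{sec:prel}--\ref{sec:2nd-order}.
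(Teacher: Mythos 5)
Your route is genuinely different from the paper's. The paper argues by contradiction through the \emph{neighborhood} graphical-derivative characterization of \cite[Theorem~3.3]{Chieu2018}: it applies that characterization at the approximating points $(x^{k},u^{k})$ to get $\langle v^{k},\nabla^{2}_{xx}L(x^{k},\mu^{k})v^{k}\rangle+d^{2}g(F(x^{k}),\mu^{k})(\nabla F(x^{k})v^{k})\ge\kappa^{-1}\|v^{k}\|^{2}$ via Lemma~\ref{lemma:value-graphical-derivative}, and then uses Proposition~\ref{proposition-reformulate-Gamma-f} together with Theorem~\ref{proposition-inequality-Gamma-f}(i) to show that, along exactly the sequences postulated in the hypothesis, $\limsup_{k}d^{2}g(F(x^{k}),\mu^{k})(\nabla F(x^{k})v^{k})\le\Gamma_{g}(F(\bar{x}),\mu)(\nabla F(\bar{x})v)$, which contradicts the assumed failure of the inequality. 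The hypothesized sequences are there to feed the second-subderivative limit machinery of Section~\ref{sec:2nd-order}, not a coderivative chain rule. You instead build an explicit element of $D^{*}(\partial f)(\bar{x},0)(v)$ and invoke the pointbased limiting-coderivative necessary condition; the skeleton of that plan (the limit passage, the identification $\langle\nabla F(\bar{x})v,(\overline{W}^{\dagger}-I)\nabla F(\bar{x})v\rangle=\Gamma_{g}(F(\bar{x}),\mu)(\nabla F(\bar{x})v)$ from Assumption~\ref{assump-2}, and the validity of $\langle z,v\rangle\ge\kappa^{-1}\|v\|^{2}$ for $z\in D^{*}(\partial f)(\bar{x},0)(v)$ under prox-regularity and subdifferential continuity) is sound.

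The gap is in the step you yourself single out as decisive. The claim that the singleton condition $\Lambda(x^{k},\nabla F(x^{k})^{T}\mu^{k})=\{\mu^{k}\}$ ``identifies $\mathcal{G}$ locally near $x^{k}$ with the single-valued smooth selection $x\mapsto\nabla F(x)^{T}\mu(x)$'' is false: $\mathcal{G}(x)=\nabla F(x)^{T}\partial g(F(x))$ remains genuinely set-valued, uniqueness of the multiplier at one pair $(x^{k},\nabla F(x^{k})^{T}\mu^{k})$ does not produce a single-valued (let alone smooth) multiplier selection nearby, and the sum rule \eqref{eq:coderivative-sum} applies only to a smooth single-valued summand, which $\mathcal{G}$ is not. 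The inclusion $\nabla^{2}_{xx}L(x^{k},\mu^{k})v^{k}+\nabla F(x^{k})^{T}z^{k}\in\widehat{D}^{*}(\partial f)(x^{k},u^{k})(v^{k})$ that you need is in fact provable, but the mechanism is different: one must estimate, for $(x',p')\in\operatorname{gph}\mathcal{G}$ near $(x^{k},\nabla F(x^{k})^{T}\mu^{k})$ with $p'=\nabla F(x')^{T}\mu'$, the quantity $\|\mu'-\mu^{k}\|$ by $O(\|x'-x^{k}\|+\|p'-p^{k}\|)$ using the calmness of the multiplier mapping that MSCQ plus the singleton condition provides (this is the role of \cite[Corollary~3.7]{Mohammadi2022} in the paper), and then push the regular-normal inequality for $(z^{k},-\nabla F(x^{k})v^{k})$ through the first-order expansion of $\nabla F(x')^{T}\mu'$. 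Without that calmness estimate the error terms in the expansion are not $o(\|(x'-x^{k},p'-p^{k})\|)$ and the regular-normal inequality for $\operatorname{gph}\partial f$ does not follow. So the architecture survives, but the justification of the chain-rule step must be replaced; as written it does not constitute a proof.
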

\begin{proof}
Suppose that $\bar{x}$ is a tilt-stable local minimizer with modulus $\kappa$ while, on the contrary, there exist vectors $w\in\mathcal{K}(\bar{x},-\nabla f_{0}(\bar{x}))$, $\mu\in\Lambda(\bar{x},-\nabla f_{0}(\bar{x});w)\cap(\tau(\bar{x},-\nabla f_{0}(\bar{x}))\mathbb{B}_{\mathcal{R}^{m}})$, and 
\begin{align*}
v\neq 0,\;\nabla F(\bar{x})v\in\bigcup_{V\in{\cal J}\operatorname{Prox}_{g}(F(\bar{x})+\mu)}\operatorname{rge}V
\end{align*}
satisfying the opposite strict inequality
\begin{align}\label{eq:necessary-cond-contradict-assump}
\langle v,\nabla^{2}_{xx}L(\bar{x},\mu)v\rangle+\Gamma_{g}(F(\bar{x}),\mu)(\nabla F(\bar{x})v)<\frac{1}{\kappa}\|v\|^{2}.
\end{align}
Therefore, there exist sequences $(x^{k},\mu^{k})\rightarrow(\ox,\mu)$ with $\mu^{k}\in\partial g(F(x^{k}))$ and $(z^{k},v^{k})\rightarrow((\overline{W}^{\dagger}-I)\nabla F(\ox)v,v)$ with $z^{k}\in\widehat{D}^{*}(\partial g)(F(x^{k}),\mu^{k})(\nabla F(x^{k})v^{k})$ satisfying $\Lambda(x^{k},\nabla F(x^{k})^{T}\mu^{k})=\{\mu^{k}\}$. Applying \cite[Corollary~3.7]{Mohammadi2022}, we obtain $\Lambda(x^{k},\nabla F(x^{k})^{T}\mu^{k};v^{k})\cap(\tau(\bar{x},-\nabla f_{0}(\bar{x}))\mathbb{B}_{\mathcal{R}^{m}})=\{\mu^{k}\}$. It follows from \eqref{eq:dom-regular-coderivative-dom-graphical-derivative} and \cite[Lemma~5.2]{MordukhovichTangWang2025} that $-\nabla F(x^{k})v^{k}\in\operatorname{dom}d^{2}g(F(x^{k}),\mu^{k})$.
Employing Lemma~\ref{lemma:value-graphical-derivative} and \cite[Theorem~3.3]{Chieu2018} brings us to the inequalities
\begin{align*}
\langle v^{k},\nabla^{2}_{xx}L(x^{k},\mu^{k})v^{k}\rangle+d^{2}g(F(x^{k}),\mu^{k})(-\nabla F(x^{k})v^{k})\geq\frac{1}{\kappa}\|v^{k}\|^{2},\quad k\in\mathbb N.
\end{align*}
Using Assumption~\ref{assump-2}, Proposition~\ref{proposition-reformulate-Gamma-f} and (i) of Theorem~\ref{proposition-inequality-Gamma-f} along $k\to\infty$, we arrive at a contradiction with the assumption in \eqref{eq:necessary-cond-contradict-assump} and thus complete the proof of the theorem. 
\end{proof}\vspace*{-0.2in}

\section{Conclusions and Further Research}\label{conc}
\label{sec:Conclusion}\vspace*{-0.05in}

The main achievements of this paper provide no-gap pointbased characterizations of tilt-stable local minimizers for a broad class of composite optimization problems without nondegeneracy assumptions. Just a few results have been obtained in this direction for problems of nonlinear programming and---partly---for second-order cone programming with a single Lorentz cone. Based on advanced tools of variational analysis and generalized differentiation, we develop here a general scheme to establish such pointbased no-gap characterizations of tilt stability without nondegeneracy under fairly mild assumptions that include the weakest metric subregularity constraint qualification. The obtained results involve a novel notion of the second-order variational function whose generalized differential properties play a crucial role in deriving our powerful characterizations of tilt-stable minimizers for nonpolyhedral and possible degenerate programs. Along with the conventional assumptions on the program data,  we also impose a technical requirement (Assumption~\ref{assump-2}) that is inherent in the class of composite optimization problems under consideration. This assumption is constructively verified in the appendix below for the spectral norm function in the composition, which is highly important for various applications. 

 Among topics of our future research, we mention revealing broader classes of composite optimization problems, where Assumption~\ref{assump-2} and the pointbased no-gap characterizations of tilt stability in Theorems~\ref{theorem-tilt-stability-main} and \ref{thm-main-necessary-cond-tilt-stability} can be explicitly calculated in terms of the given data. Moreover, we intend to incorporate the established no-gap pointbased characterizations of tilt stability into the design and justification of numerical methods of optimization; compare, e.g., \cite{Khanh2023,Mordukhovich2024,Mordukhovich2021} for particular classes of problems in constrained optimization.\vspace*{-0.12in}

\begin{appendices}
\section{Assumption Verification for the Spectral Norm Function} \label{appendices}\vspace*{-0.05in}

Let $p,q\in\mathbb N$ with $p\leq q$, and let $\|\cdot\|_{2}$ be the {\em matrix spectral norm function}, which is defined as the largest singular value of the matrix in question. We show below that Assumption~\ref{assump-2} holds where $g$ is the spectral norm function. Given $X\in\mathcal{R}^{p\times q}$ and $U\in\partial \|X\|_{2}$, let $A:=X+U$ and suppose that the singular value decomposition of $A$ takes the form
$A=R[\Sigma\ 0]S^{T}$, where $R\in\mathcal{R}^{p\times p}$ and $S\in\mathcal{R}^{q\times q}$ are orthogonal matrices, $\Sigma=\operatorname{Diag}(\sigma)$ is the diagonal matrix of singular values of matrix $A$ with $\sigma=(\sigma_{1},\ldots,\sigma_{p})$, and $\sigma_{1}\geq\ldots\geq\sigma_{r}>0=\sigma_{r+1}=\ldots=
\sigma_{p}$.
		
Denote $\mathrm{B}:=\{x\in\mathcal{R}^{p}\,|\,\|x\|_{1}\leq 1\}$ and $\mathcal{B}:=\{X\in\mathcal{R}^{p\times q}\,|\,\|X\|_{*}\leq 1\}$, where $\|\cdot\|_{*}$ is the {\em nuclear norm function}, i.e., the sum of all the singular values of a given matrix. It follows from \cite[Proposition~2.2]{ChenLiuSunToh2016} that the proximal mapping of $\|\cdot\|_{2}$ is represented as
\begin{align*}
\operatorname{Prox}_{\|\cdot\|_{2}}(A)=A-\Pi_{\mathcal{B}}(A),
\end{align*}
where $\Pi_{\mathcal{B}}$ is the projection onto $\mathcal{B}$ given by
$\Pi_{\mathcal{B}}(A)=R[\operatorname{Diag}(\Pi_{\mathrm{B}}(\sigma))\ 0]S^{T}$.
Let further $\mathcal{O}_{F}$ be the collection of points where $\Pi_{\mathcal{B}}$ is differentiable, let $\mathcal{E}:=\{A\in\mathcal{O}_{F}\,|\,\sigma_{1}(A)>\sigma_{2}(A)>\ldots>\sigma_{p}(A)>0\}$, and let $k_{1}(\sigma)$ and $k_{2}(\sigma)$ be the maximal indexes of the sets
\begin{align*}
\left\{i\,\left|\,\sigma_{i}>\frac{1}{i}\left(\sum_{j=1}^{i}\sigma_{j}-1\right)\right.\right\},\quad\left\{i\,\left|\,\sigma_{i}\geq\frac{1}{i}\left(\sum_{j=1}^{i}\sigma_{j}-1\right)\right.\right\},
\end{align*}
respectively. Taking a sequence $\{A^{i}\}$ converging to $A$ with all $A^{i}\in\mathcal{E}$, suppose that the singular value decomposition of $A^{i}$ is $A^{i}=R^{i}[\operatorname{Diag}(\sigma^{i})\ 0](S^{i})^{T}$, where $R^{i}\in\mathcal{R}^{p\times p}$ and $S^{i}\in\mathcal{R}^{q\times q}$ are orthogonal matrices, $\sigma^{i}=(\sigma_{1}^{i},\ldots,\sigma_{p}^{i})$ is the singular value vector of $A^{i}$ and $\sigma^{i}_{1}>\sigma^{i}_{2}>\ldots>\sigma^{i}_{p}>0$. Letting $s^{i}\in[k_{1}(\sigma^{i}),k_{2}(\sigma^{i})]$ and $s\in[k_{1}(\sigma),k_{2}(\sigma)]$, denote  
\begin{align*}
\Pi_{\mathrm{B}}(\sigma^{i}):=(p_{1}^{i},\ldots,p_{s^{i}}^{i},0,\ldots,0)^{T},\quad\Pi_{\mathrm{B}}(\sigma):=(p_{1},\ldots,p_{s},0,\ldots,0)^{T}.
\end{align*}
Based on \cite[Proposition~4.1]{ChenLiuSunToh2016}, we verify the claimed result in the following three cases.\\[0.5ex]
{\bf Case~1}: $\|A\|_{*}<1$. In this case, ${\cal J}\operatorname{Prox}_{\|\cdot\|_{2}}(A)=\{0\}$ is a singleton, and the fulfillment of Assumption~\ref{assump-2} is obvious.\\[0.5ex]	
{\bf Case~2}: $\|A\|_{*}=1$. Since $\Pi_{\mathcal{B}}$ is differentiable on the relative interior of $\mathcal{B}$, the origin is always an element of ${\cal J}\operatorname{Prox}_{\|\cdot\|_{2}}(A)$. Without loss of generality, it is sufficient to consider the case where $\|A^{i}\|_{*}>1$. 
Observing that $k_{1}(\sigma)=r$ and $k_{2}(\sigma)=p$ in this case, denote 
\begin{align*}
\alpha_{1}:=\{1,\ldots,r\},\quad\alpha_{2}:=\{r+1,\ldots,s\},\quad
\alpha_{3}:=\{s+1,\ldots,p\},\quad\alpha_{4}:=\{p+1,\ldots,q\}
\end{align*} 
and then define $\Delta^{i}\in\mathcal{R}^{(s-r)\times(p-s)}$, $\Theta^{i}\in\mathcal{R}^{(s-r)\times(s-r)}$, $\Lambda^{i}\in\mathcal{R}^{(s-r)\times(p-s)}$, and $\Upsilon^{i}\in\mathcal{R}^{s-r}$ by
\begin{align*}
\Delta^{i}_{kj}&=\frac{p_{k+r}^{i}}{\sigma^{i}_{k+r}-\sigma^{i}_{j+s}}\;\forall\;k\in\alpha_{2}-r,\,j\in\alpha_{3}-s,\quad
\Theta^{i}_{kj}=\frac{p^{i}_{k+r}+p^{i}_{j+r}}{\sigma^{i}_{k+r}+\sigma^{i}_{j+r}}\;\forall\;k\in\alpha_{2}-r,\,j\in\alpha_{2}-r,\\
\Lambda^{i}_{kj}&=\frac{p^{i}_{k+r}}{\sigma^{i}_{k+r}+\sigma^{i}_{j+s}},\;\forall\;k\in\alpha_{2}-r,\,j\in\alpha_{3}-s,\quad
\Upsilon^{i}_{k}=\frac{p^{i}_{k+r}}{\sigma^{i}_{k+r}}\;\forall\;k\in\alpha_{2}-r.
\end{align*}
Let $\mathcal{S}_{\Delta}$, $\mathcal{S}_{\Theta}$, $\mathcal{S}_{\Lambda}$, and $\mathcal{S}_{\Upsilon}$ be the collections of accumulation points of the sequences $\{\Delta^{i}\}$, $\{\Theta^{i}\}$, $\{\Lambda^{i}\}$, and $\{\Upsilon^{i}\}$, respectively. Picking
$\mathcal{V}\in\partial_{B}\operatorname{Prox}_{\|\cdot\|_{2}}(A)$ and $D\in\mathcal{R}^{p\times q}$, we get that $I-\mathcal{V}\in\partial_{B}\Pi_{\mathcal{B}}(A)$. It follows from \cite[Proposition~4.1]{ChenLiuSunToh2016} that
\begin{align*}
(I-\mathcal{V})D=\overline{R}[Z_{1}(s)\ Z_{2}(s)]\overline{S}^{T},
\end{align*}
where $\overline{R}\in\mathcal{R}^{p\times p}$ and $\overline{S}\in\mathcal{R}^{q\times q}$ are singular vector matrices of $A$, where $\overline{S}=[\overline{S}_{1}\ \overline{S}_{2}]$ with $\overline{S}_{1}\in\mathcal{R}^{q\times p}$ and $\overline{S}_{2}\in\mathcal{R}^{q\times(q-p)}$, where $Z_{1}(s)\in\mathcal{R}^{p\times p}$ and $Z_{2}(s)\in\mathcal{R}^{p\times(q-p)}$ are defined by
\begin{align*}
Z_{1}(s)=\Omega(s)\circ D_{1}^{s}+\Gamma(s)\circ D_{1}^{a}-\frac{\operatorname{Tr}(D_{11}(s))}{s}\left[\begin{array}{cc}
I_{s} & 0\\ 0 & 0
\end{array}\right],\quad
Z_{2}(s)=\left[\begin{array}{c}
1_{r\times(q-p)}\\ \Upsilon_{\alpha_{2}}1^{T}_{q-p} \\0
\end{array}\right]\circ D_{2}
\end{align*}
with $\Upsilon_{\alpha_{2}}\in\mathcal{S}_{\Upsilon}$, $D_{1}^{s}:=\frac{D_{1}+D_{1}^{T}}{2}$, $D_{1}^{a}:=\frac{D_{1}-D_{1}^{T}}{2}$, $D_{1}:=\overline{R}^{T}D\overline{S}_{1}$, $D_{2}:=\overline{R}^{T}D\overline{S}_{2}$, where $D_{11}(s)$ is the matrix extracted from the first $s$ rows and columns of $D_{1}$, and where $\Omega(s)$ and $\Gamma(s)$ are
\begin{align*}
\Omega(s):=\left[\begin{array}{ccc}
1_{r\times r} & 1_{r\times (s-r)} & 1_{r\times(p-s)}\\
1_{(s-r)\times r} & 1_{(s-r)\times(s-r)} & \Omega_{\alpha_{2}\alpha_{3}}\\
1_{(p-s)\times r} & \Omega_{\alpha_{2}\alpha_{3}}^{T} & 0
\end{array}\right],\quad
\Gamma(s):=\left[\begin{array}{ccc}
1_{r\times r} & 1_{r\times (s-r)} & 1_{r\times(p-s)}\\
1_{(s-r)\times r} & \Gamma_{\alpha_{2}\alpha_{2}} & \Gamma_{\alpha_{2}\alpha_{3}}\\
1_{(p-s)\times r} & \Gamma_{\alpha_{2}\alpha_{3}}^{T} & 0
\end{array}\right]
\end{align*}
with $\Omega_{\alpha_{2}\alpha_{3}}\in\mathcal{S}_{\Delta}$, $\Gamma_{\alpha_{2}\alpha_{2}}\in\mathcal{S}_{\Theta}$, and $\Gamma_{\alpha_{2}\alpha_{3}}\in\mathcal{S}_{\Lambda}$. 
For any $\mathcal{W}\in{\cal J}\operatorname{Prox}_{\|\cdot\|_{2}}(A)$, $D\in\mathcal{R}^{p\times q}$, there are $\mathcal{V}^{j}\in\partial_{B}\operatorname{Prox}_{\|\cdot\|_{2}}(A)$ and $\lambda_{j}\in[0,1]$ as $j=1,\ldots,t$ with $\sum\limits_{j=1}^{t}\lambda_{j}=1$ satisfying $\mathcal{W}=\sum\limits_{j=1}^{t}\lambda_{j}\mathcal{V}^{j}$, and there exist $s_{1},\ldots,s_{t}\in[k_{1}(\sigma),k_{2}(\sigma)]$, $s_{1}\leq s_{2}\leq\ldots\leq s_{t}$ such that
\begin{align*}
\mathcal{W}D=D-\overline{R}[E_{1}\ E_{2}]\overline{S}^{T},
\end{align*}
where $E_{1}\in\mathcal{R}^{p\times p}$, $E_{2}\in\mathcal{R}^{p\times(q-p)}$ are defined by
\begin{align*}
E_{1}:=\widehat{\Omega}\circ D_{1}^{s}+\widehat{\Gamma}\circ D_{1}^{a}-\left[\begin{array}{cc}
\widehat{\Lambda} & 0 \\
0 & 0
\end{array}\right]\ \mbox{and}\ E_{2}:=\left[\begin{array}{c}\widehat{\Upsilon}1^{T}_{q-p}\\ 0\end{array}\right]\circ D_{2}
\end{align*}
with the notations
\begin{align*}
\widehat{\Omega}&:=\sum_{i=1}^{t}\lambda_{i}\Omega(s_{i}),\quad \widehat{\Gamma}:=\sum_{i=1}^{t}\lambda_{i}\Gamma(s_{i}),\quad \widehat{\Upsilon}:=\sum_{i=1}^{t}\left[\begin{array}{c}1_{r}\\ \lambda_{i}\Gamma_{\alpha_{2}}\end{array}\right],\\
\widehat{\Lambda}&:=\operatorname{Diag}\left\{\underbrace{\sum_{l=1}^{t}\frac{\lambda_{l}\operatorname{Tr}D_{11}(s_{l})}{s_{l}},\dots,\sum_{l=1}^{t}\frac{\lambda_{l}\operatorname{Tr}D_{11}(s_{l})}{s_{l}}}_{r},\sum_{{l=1}\atop{s_{l}\geq r+1}}^{t}\frac{\lambda_{l}\operatorname{Tr}D_{11}(s_{l})}{s_{l}},\ldots,\sum_{{l=1}\atop{s_{l}\geq s_{t}}}^{t}\frac{\lambda_{l}\operatorname{Tr}D_{11}(s_{l})}{s_{l}}\right\}.
\end{align*} 
Letting $s=r$, we get $\alpha_{2}=\emptyset$ and find
$\overline{\mathcal{W}}\in\partial_{B}\operatorname{Prox}_{\|\cdot\|_{2}}(A)$ such that
\begin{align*}
\overline{\mathcal{W}}D=D-\overline{R}[Z_{1}(r)\ Z_{2}(r)]\overline{S}^{T}=\overline{R}\left[\begin{array}{cc}
\frac{\operatorname{Tr}(D_{11}(r))}{r}I_{r} & 0_{r\times (q-r)}\\
0_{(p-r)\times r} & \widetilde{D}_{\alpha_{3}\bar{\gamma}}
\end{array}\right]\overline{S}^{T},\ \widetilde{D}=\overline{R}^{T}D\overline{S},\ \bar{\gamma}=\alpha_{3}\cup\alpha_{4}
\end{align*}
for any $D\in\mathcal{R}^{p\times q}$. Since $A=R[\Sigma\ 0]S^{T}=\overline{R}[\Sigma\ 0]\overline{S}^{T}$, it follows from \cite[Proposition~2.4]{ChenLiuSunToh2016} that there exist orthogonal matrices $Q\in\mathcal{R}^{r\times r}$, $Q'\in\mathcal{R}^{(p-r)\times(p-r)}$, and $Q''\in\mathcal{R}^{(q-r)\times(q-r)}$ such that
\begin{align*}
\overline{R}=R\left[\begin{array}{cc}
Q & 0\\ 0 & Q'
\end{array}\right]\;\mbox{ and }\;\overline{S}=S\left[\begin{array}{cc}
Q & 0\\ 0 & Q''
\end{array}\right].
\end{align*}
Therefore, we arrive at the representations
\begin{align*}
\overline{\mathcal{W}}D=R\left[\begin{array}{cc}
\frac{\operatorname{Tr}(D_{11}(r)}{r}I_{r} & 0_{r\times (q-r)}\\
0_{(p-r)\times r} & \widehat{D}_{\alpha_{3}\bar{\gamma}}
\end{array}\right]S^{T}\;\mbox{ and }\;\widehat{D}=R^{T}DS,
\end{align*}
which brings us to the equalities
\begin{align*}
\bigcup_{\mathcal{W}\in{\cal J}\operatorname{Prox}_{\|\cdot\|_{2}}(A)}\operatorname{rge}\mathcal{W}=\operatorname{rge}\overline{\mathcal{W}}=\left\{\left.R\left[\begin{array}{cc}
aI_{r} & 0_{r\times (q-r)}\\
0_{(p-r)\times r} & \widehat{D}
\end{array}\right]S^{T}\,\right|\,a\in\mathcal{R},\,\widehat{D}\in\mathcal{R}^{(p-r)\times(q-r)}\right\}.
\end{align*} 
Furthermore, for any $Y\in\bigcup\limits_{\mathcal{W}\in{\cal J}\operatorname{Prox}_{\|\cdot\|_{2}}(A)}\operatorname{rge}\mathcal{W}$, there exists $\overline{D}\in\mathcal{R}^{p\times q}$ such that $Y=\overline{W}\overline{D}$ and 
\begin{align*}
\overline{D}-Y=R\left[\begin{array}{cc}
\overline{D}_{\alpha_{1} \alpha_{1}}-\frac{\operatorname{Tr}\left(\overline{D}_{\alpha_{1}\alpha_{1}}\right)}{r}I_{r}& \overline{D}_{\alpha_{1}\bar{\gamma} }\\
\overline{D}_{\alpha_{3}\alpha_{1}} & 0_{(p-r)\times(q-r)}
\end{array}\right]S^{T}.
\end{align*} 
Using \cite[Proposition~3.2]{Tangwang2025} tells us that $\langle Y,D-Y\rangle\geq 0$. Therefore,
\begin{align*}
\min_{{D\in\mathcal{R}^{p\times q},Y=\mathcal{W}D}\atop{\mathcal{W}\in{\cal J}\operatorname{Prox}_{\|\cdot\|_{2}}(A)}}\langle Y,D-Y\rangle=\langle Y,\overline{D}-Y\rangle=\sum_{i=1}^{r}\left(\overline{D}_{ii}-\frac{\operatorname{Tr}(\overline{D}_{\alpha_{1}\alpha_{1}})}{r}\right)\frac{\operatorname{Tr}(\overline{D}_{\alpha_{1}\alpha_{1}})}{r}=0,
\end{align*}
which justifies the fulfillment of the Assumption~\ref{assump-2} in this case.\\[0.5ex]
{\bf Case~3}: $\|A\|_{*}>1$. It is easy to check that now we have $k_{1}(\sigma)\leq k_{2}(\sigma)\leq r$. Denote
\begin{align*}
&\beta_{1}:=\{1,\ldots,k_{1}(\sigma)\},\quad\beta_{2}:=\{k_{1}(\sigma)+1,\ldots,s\},\quad
\beta_{3}:=\{s+1,\ldots,k_{2}(\sigma)\},\\
&\beta_{4}:=\{k_{2}(\sigma)+1,\ldots,r\},\quad
\beta_{5}:=\{r+1,\ldots,p\}
\end{align*}
and then define $\Xi^{i}\in\mathcal{R}^{(s-k_{1}(\sigma))\times(k_{2}(\sigma)-s)}$ with the components
\begin{align*}
\Xi^{i}_{kj}:=\frac{p^{i}_{k+k_{1}(\sigma)}}{\sigma^{i}_{k+k_{1}(\sigma)}-\sigma^{i}_{j+s}}\;\mbox{ for all }\;k\in\beta_{2}-k_{1}(\sigma)\;\mbox{ and }\;j\in\beta_{3}-s.
\end{align*}
Let $\mathcal{S}_{\Xi}$ be the collection of accumulation points of the sequence $\{\Xi^{i}\}$. For any $\mathcal{V}\in\partial_{B}\operatorname{Prox}{\|\cdot\|_{2}}(A)$ and $D\in\mathcal{R}^{p\times q}$, deduce from \cite[Proposition~4.1]{ChenLiuSunToh2016} that
\begin{align*}
(I-\mathcal{V})D=\overline{R}[Z_{1}(s)\ Z_{2}(s)]\overline{S}^{T},
\end{align*}
where $\overline{R}\in\mathcal{R}^{p\times p}$ and $\overline{S}\in\mathcal{R}^{q\times q}$ are singular vector matrices of $A$, where $\overline{S}=[\overline{S}_{1}\ \overline{S}_{2}$] with $\overline{S}_{1}\in\mathcal{R}^{q\times p}$ and $\overline{S}_{2}\in\mathcal{R}^{q\times(q-p)}$, and where $Z_{1}(s)\in\mathcal{R}^{p\times p}$ and $Z_{2}(s)\in\mathcal{R}^{p\times(q-p)}$ are defined by
\begin{align*}
Z_{1}(s):=\Omega(s)\circ D_{1}^{s}+\Gamma(s)\circ D_{1}^{a}-\frac{\operatorname{Tr}(D_{11}(s))}{s}\left[\begin{array}{cc}
I_{s} & 0\\ 0 & 0
\end{array}\right]\;\mbox{ and }\;
Z_{2}(s):=\left[\begin{array}{c}
\Upsilon(s)\, 1^{T}_{q-p} \\0
\end{array}\right]\circ D_{2}
\end{align*}
with $D_{1}^{s}:=\frac{D_{1}+D_{1}^{T}}{2}$, $D_{1}^{a}:=\frac{D_{1}-D_{1}^{T}}{2}$, $D_{1}:=\overline{R}^{T}D\overline{S}_{1}$, $D_{2}:=\overline{R}^{T}D\overline{S}_{2}$, where $\Upsilon(s)\in\mathcal{R}^{s}$, and where $D_{11}(s)$ is the matrix extracted from the first $s$ rows and columns of $D_{1}$. The matrices $\Omega(s),\Gamma(s)\in\mathcal{R}^{p\times p}$ and vector $\Upsilon\in\mathcal{R}^{s}$ above are given, respectively, by
\begin{align*}
\Omega(s)&:=\left[\begin{array}{ccc}
1_{s\times s} & \Omega_{\gamma_{1}\gamma_{2}} & \Omega_{\gamma_{1}\beta_{5}} \\ \Omega_{\gamma_{1}\gamma_{2}}^{T} & 0_{(r-s)\times(r-s)} & 0_{(r-s)\times(p-r)} \\
\Omega_{\gamma_{1}\beta_{5}}^{T} & 0_{(p-r)\times(r-s)} & 0_{(p-r)\times(p-r)}
\end{array}\right],\ \gamma_{1}=\beta_{1}\cup\beta_{2},\ \gamma_{2}=\beta_{3}\cup\beta_{4},\\
\Gamma(s)&:=\left[\begin{array}{ccc}
\Gamma_{\gamma_{1}\gamma_{1}} & \Gamma_{\gamma_{1}\gamma_{2}} & \Gamma_{\gamma_{1}\beta_{5}}\\
\Gamma_{\gamma_{1}\gamma_{2}}^{T} & 0_{(r-s)\times(r-s)} & 0_{(r-s)\times(p-s)}\\
\Gamma_{\gamma_{1}\beta_{5}}^{T} & 0_{(p-r)\times(r-s)} & 0_{(p-s)\times(p-s)}
\end{array}\right],
\end{align*}
where $\Omega_{\beta_{2}\beta_{3}}\in\mathcal{S}_{\Xi}$ with the components
\begin{align*}
(\Omega_{\beta_{1}\beta_{3}})_{kj}&:=\frac{p_{k}}{\sigma_{k}-\sigma_{j+s}},\;\forall\;k\in\beta_{1},\;j\in\beta_{3}-s,\\
(\Omega_{\gamma_{1}\beta_{4}})_{kj}&:=\frac{p_{k}}{\sigma_{k}-\sigma_{j+k_{2}(\sigma)}},\;\forall\;k\in\gamma_{1},\;j\in\beta_{4}-k_{2}(\sigma),\quad
(\Omega_{\gamma_{1}\beta_{5}})_{kj}:=\frac{p_{k}}{\sigma_{k}},\;\forall\;k\in\gamma_{1},\;j\in\beta_{5}-r,\\
(\Gamma_{\gamma_{1}\gamma_{1}})_{kj}&:=\frac{p_{k}+p_{j}}{\sigma_{k}+\sigma_{j}},\;\forall\;k\in\gamma_{1},\;j\in\gamma_{1},\quad		
(\Gamma_{\gamma_{1}\gamma_{2}})_{kj}:=\frac{p_{k}}{\sigma_{k}+\sigma_{j+s}},\;\forall\;k\in\gamma_{1},\;j\in\gamma_{2}-s,\\
(\Gamma_{\gamma_{1}\beta_{5}})_{kj}&:=\frac{p_{k}}{\sigma_{k}},\;\forall\;k\in\gamma_{1},\,j\in\beta_{5}-r,\quad
\Upsilon(s)_{k}:=\frac{p_{k}}{\sigma_{k}},\;\forall\;k=1,\ldots,s.
\end{align*}
Since $A=R[\Sigma\ 0]S^{T}=\overline{R}[\Sigma\ 0]\overline{S}^{T}$, it follows from
\cite[Proposition~2.4]{ChenLiuSunToh2016} that there exist orthogonal matrices $Q_{1}\in\mathcal{R}^{s\times s}$, $Q_{2}\in\mathcal{R}^{(r-s)\times(r-s)}$, $Q_{3}\in\mathcal{R}^{(p-r)\times(p-r)}$, and $Q_{4}\in\mathcal{R}^{(q-r)\times(q-r)}$ such that
\begin{align*}
\overline{R}=R\left[\begin{array}{ccc}
Q_{1} & 0 & 0\\
0 & Q_{2} & 0\\
0 & 0 & Q_{3}
\end{array}\right]\;\mbox{ and }\;
\overline{S}=S\left[\begin{array}{ccc}
Q_{1} & 0 & 0\\
0 & Q_{2} & 0\\
0 & 0 & Q_{4}
\end{array}\right].
\end{align*}
This allows us to conclude that
\begin{align*}
\overline{R}[Z_{1}(s)\ Z_{2}(s)]\overline{S}^{T}=R\left[\begin{array}{ccc}
Q_{1} & 0 & 0\\
0 & Q_{2} & 0\\
0 & 0 & Q_{3}
\end{array}\right][Z_{1}(s)\ Z_{2}(s)]\left[\begin{array}{ccc}
Q_{1}^{T} & 0 & 0\\
0 & Q_{2}^{T} & 0\\
0 & 0 & Q_{4}^{T}
\end{array}\right]S^{T}.
\end{align*}
Taking further nonsingular matrices $P_{1}\in\mathcal{R}^{m\times m}$, $P_{2}\in\mathcal{R}^{n\times n}$, and $B,B',C\in\mathcal{R}^{m\times n}$ with all the elements of $B$ being nonzero and with $B\circ B'=1_{m\times n}$, we can form
\begin{align*}
C'=B'\circ(P_{1}^{-1}(B\circ P_{1}CP_{2}))P_{2}^{-1}
\end{align*}
such that $B\circ P_{1}CP_{2}=P_{1}(B\circ C')P_{2}$. Therefore, there exists $D'$ satisfying 
\begin{align*}
\overline{R}[Z_{1}(s)\ Z_{2}(s)]\overline{S}^{T}=R[\widetilde{Z}_{1}(s)\ \widetilde{Z}_{2}(s)]S^{T},
\end{align*}
where $\widetilde{Z}_{1}(s)$ and $\widetilde{Z}_{2}(s)$ have the same expressions as $Z_{1}(s)$ and $Z_{2}(s)$, respectively, except that $D$ is replaced by $D'$. Then we fix $\overline{R}:=R$ and $\overline{S}:=S$. 
		 
Denote now $\widehat{D}_{1}^{s}:=\frac{\widehat{D}_{1}+\widehat{D}_{1}^{T}}{2}$, $\widehat{D}_{1}^{a}:=\frac{\widehat{D}_{1}-\widehat{D}_{1}^{T}}{2}$, $\widehat{D}_{1}:=R^{T}DS_{1}$, $\widehat{D}_{2}:=R^{T}DS_{2}$, $\widetilde{D}:=R^{T}DS=(d_{ij})_{p\times q}$, and by $\widehat{D}_{11}(s)$ the matrix extracted from the first $s$ rows and columns of $\widehat{D}_{1}$. For any $\mathcal{W}\in{\cal J}\operatorname{Prox}_{\|\cdot\|_{2}}(A)$, there are $\mathcal{V}^{k}\in\partial_{B}\operatorname{Prox}_{\|\cdot\|_{2}}(A)$, $\lambda_{k}\in[0,1]$, $k=1,\ldots,t$, with $\sum\limits_{k=1}^{t}\lambda_{k}=1$ such that $\mathcal{W}=\sum\limits_{k=1}^{t}\lambda_{k}\mathcal{V}^{k}$. For any $D\in\mathcal{R}^{p\times q}$, there are $s_{1},\ldots,s_{l}\in[k_{1}(\sigma),k_{2}(\sigma)]$, $s_{1}\leq\ldots\leq s_{t}$ satisfying the equality
\begin{align}\label{eq:partial-2-norm-expression}
\mathcal{W}D=D-R[E_{1}\ E_{2}]S^{T},
\end{align}
where $E_{1}\in\mathcal{R}^{p\times p}$ and $E_{2}\in\mathcal{R}^{p\times(q-p)}$ are defined by
\begin{align*}
E_{1}:=\widehat{\Omega}\circ\widehat{D}_{1}^{s}+\widehat{\Gamma}\circ\widehat{D}_{1}^{a}-\left[\begin{array}{cc}
\widehat{\Lambda} & 0 \\
0 & 0
\end{array}\right]\;\mbox{ and }\; E_{2}:=\left[\begin{array}{c}\widehat{\Upsilon}\,1_{q-p}^{T}\\ 0\end{array}\right]\circ\widehat{D}_{2}
\end{align*}
with $\widehat{\Omega}\in\mathcal{R}^{p\times p}$, $\widehat{\Gamma}\in\mathcal{R}^{p\times p}$, $\widehat{\Lambda}\in\mathcal{R}^{s_{t}\times s_{t}}$, and $\widehat{\Upsilon}\in\mathcal{R}^{s_{t}}$ given as
\begin{align*}
\widehat{\Omega}&:=\left[\begin{array}{cccc}
1_{k_{1}(\sigma)\times k_{1}(\sigma)} & \widehat{\Omega}_{\beta_{1}\gamma_{3}}& \widehat{\Omega}_{\beta_{1}\beta_{4}} & \widehat{\Omega}_{\beta_{1}\beta_{5}}\\
\widehat{\Omega}_{\beta_{1}\gamma_{3}}^{T} &  \\
\widehat{\Omega}_{\beta_{1}\beta_{4}}^{T} & & \widehat{\Omega}_{\gamma_{4}\gamma_{4}} \\\widehat{\Omega}_{\beta_{1}\beta_{5}}^{T} &
\end{array}\right],\ \begin{array}{ll}\gamma_{3}:=\{k_{1}(\sigma)+1,\ldots,k_{2}(\sigma)\},\\ \gamma_{4}:=\{k_{1}(\sigma)+1,\ldots,p\},\end{array}\\
\widehat{\Gamma}&:=\left[\begin{array}{cccc}
\widehat{\Gamma}_{\beta_{1}\beta_{1}} & \widehat{\Gamma}_{\beta_{1}\gamma_{3}} & \widehat{\Gamma}_{\beta_{1}\beta_{4}} & \widehat{\Gamma}_{\beta_{1}\beta_{5}} \\
\widehat{\Gamma}_{\beta_{1}\gamma_{3}}^{T} \\
\widehat{\Gamma}_{\beta_{1}\beta_{4}}^{T} & & \widehat{\Gamma}_{\gamma_{4}\gamma_{4}} \\
\widehat{\Gamma}_{\beta_{1}\beta_{5}}^{T}
\end{array}\right],\\
\widehat{\Lambda}&:=\operatorname{Diag}\left\{\underbrace{\sum_{l=1}^{t}\frac{\lambda_{l}\operatorname{Tr}D_{11}(s_{l})}{s_{l}},\dots,\sum_{l=1}^{t}\frac{\lambda_{l}\operatorname{Tr}D_{11}(s_{l})}{s_{l}}}_{k_{1}(\sigma)},\sum_{{l=1}\atop{s_{l}\geq k_{1}(\sigma)+1}}^{t}\frac{\lambda_{l}\operatorname{Tr}D_{11}(s_{l})}{s_{l}},\ldots,\sum_{{l=1}\atop{s_{l}\geq s_{t}}}^{t}\frac{\lambda_{l}\operatorname{Tr}D_{11}(s_{l})}{s_{l}}\right\}
\end{align*}
with their components defined via the previous constructions by
\begin{align*}
\widehat{\Omega}_{\beta_{1}\beta_{4}}&:= \Omega_{:\beta_{1}\beta_{4}},\ \widehat{\Omega}_{\beta_{1}\beta_{5}}:=\Omega_{\beta_{1}\beta_{5}},\ \widehat{\Gamma}_{\beta_{1}\beta_{1}}:=\Gamma_{\beta_{1}\beta_{1}},\ \widehat{\Gamma}_{\beta_{1}\beta_{4}}: = \Gamma_{\beta_{1}\beta_{4}},\ \widehat{\Gamma}_{\beta_{1}\beta_{5}}:=\Gamma_{\beta_{1}\beta_{5}},\\
\widehat{\Omega}_{\gamma_{4}\gamma_{4}}&:=\sum_{l=1}^{t}\lambda_{l}\Omega(s_{l})_{\gamma_{4}\gamma_{4}},\quad\widehat{\Omega}_{kj}:=\widehat{\Omega}_{jk}\in[0,1]\;
\mbox{ for all }\; k,j\in\gamma_{4},\\
(\widehat{\Omega}_{\beta_{1}\gamma_{3}})_{kj}&:=\sum_{{l=1}\atop{s_{l}\geq j+k_{1}(\sigma)}}^{t}\lambda_{l}+\sum_{{l=1}\atop{s_{l}<j+k_{1}(\sigma)}}^{t}\frac{\lambda_{l}p_{k}}{\sigma_{k}-\sigma_{j}}\;\mbox{ for all }\;k\in\beta_{1},\;j\in\gamma_{3}-k_{1}(\sigma),\\
\widehat{\Gamma}_{\gamma_{4}\gamma_{4}}&:=\sum_{l=1}^{t}\lambda_{l}\Gamma(s_{l})_{\gamma_{4}\gamma_{4}},\quad\widehat{\Gamma}_{kj}:=\widehat{\Gamma}_{jk}\in[0,1)\;
\mbox{ for all }\; k,j\in\gamma_{4},\\
(\widehat{\Gamma}_{\beta_{1}\gamma_{3}})_{kj}&:=\sum_{{l=1}\atop{s_{l}\geq j+k_{1}(\sigma)}}^{t}\frac{\lambda_{l}(p_{k}+p_{j})}{\sigma_{k}+\sigma_{j}}+\sum_{{l=1}\atop{s_{l}<j+k_{1}(\sigma)}}^{t}\frac{\lambda_{l}p_{k}}{\sigma_{k}+\sigma_{j}}\;\mbox{ for all }\;k\in\beta_{1},\;j\in\gamma_{3}-k_{1}(\sigma),\\
\widehat{\Upsilon}_{k}&:=\frac{p_{k}}{\sigma_{k}}\sum_{{l=1}\atop{s_{l}\geq k}}^{t}\lambda_{l}\;\mbox{ for all }\;k\in\{1,2,\ldots,s_{t}\}.
\end{align*}
For any $Y\in\bigcup\limits_{\mathcal{W}\in{\cal J}\operatorname{Prox}_{\|\cdot\|_{2}}(A)}\operatorname{rge}\mathcal{W}$, there exist $\mathcal{W}\in{\cal J}\operatorname{Prox}_{\|\cdot\|_{2}}(A)$ and $D\in\mathcal{R}^{p\times q}$ such that $Y=\mathcal{W}D$. Letting $\widetilde{Y}:=R^{T}YS=(y_{ij})_{p\times q}$, it follows from \eqref{eq:partial-2-norm-expression} that
\begin{align*}
&\left\langle Y,D-Y\right\rangle=\left\langle\widetilde{Y},\widetilde{D}-\widetilde{Y}\right\rangle=\sum_{{1\leq i,j\leq k_{1}(\sigma)}\atop{i\neq j}}y_{ij}(d_{ij}-y_{ij})+\left\langle \widetilde{Y}_{\beta_{1}\gamma_{4}},(\widetilde{D}-\widetilde{Y})_{\beta_{1}\gamma_{4}}\right\rangle \\
&\ +\left\langle \widetilde{Y}_{\gamma_{4}\beta_{1}},(\widetilde{D}-\widetilde{Y})_{\gamma_{4}\beta_{1}}\right\rangle+\left\{\left\langle \widetilde{Y}_{\gamma_{4}\gamma_{4}},(\widetilde{D}-\widetilde{Y})_{\gamma_{4}\gamma_{4}}\right\rangle+\sum_{i=1}^{k_{1}(\sigma)}y_{ii}(d_{ii}-y_{ii})\right\}+\sum_{{1\leq i\leq k_{1}(\sigma)}\atop{p+1\leq j\leq q}}y_{ij}(d_{ij}-y_{ij})\\
&\geq2\sum_{1\leq i<j\leq k_{1}(\sigma)}\frac{p_{i}+p_{j}}{\sigma_{i}+\sigma_{j}}\left(1-\frac{p_{i}+p_{j}}{\sigma_{i}+\sigma_{j}}\right)\left(\frac{d_{ij}-d_{ji}}{2}\right)^{2}\\
&\quad+2\sum_{{1\leq i\leq k_{1}(\sigma)}\atop{k_{1}(\sigma)+1\leq j\leq k_{2}(\sigma)}}\left[\widehat{\Omega}_{ij}\left(1-\widehat{\Omega}_{ij}\right)\left(\frac{d_{ij}+d_{ji}}{2}\right)^{2}+\widehat{\Gamma}_{ij}\left(1-\widehat{\Gamma}_{ij}\right)\left(\frac{d_{ij}-d_{ji}}{2}\right)^{2}\right]\\
&\quad+2\sum_{{1\leq i\leq k_{1}(\sigma)}\atop{k_{2}(\sigma)+1\leq j\leq r}}\left[\frac{p_{i}}{\sigma_{i}-\sigma_{j}}\left(1-\frac{p_{i}}{\sigma_{i}-\sigma_{j}}\right)\left(\frac{d_{ij}+d_{ji}}{2}\right)^{2}+\frac{p_{i}}{\sigma_{i}+\sigma_{j}}\left(1-\frac{p_{i}}{\sigma_{i}+\sigma_{j}}\right)\left(\frac{d_{ij}-d_{ji}}{2}\right)^{2}\right]\\
&\quad+2\sum_{{1\leq i\leq k_{1}(\sigma)}\atop{r+1\leq j\leq p}}\frac{p_{i}}{\sigma_{i}}\left(1-\frac{p_{i}}{\sigma_{i}}\right)\left[\left(\frac{d_{ij}+d_{ji}}{2}\right)^{2}+\left(\frac{d_{ij}-d_{ji}}{2}\right)^{2}\right]+\sum_{i=1}^{k_{1}(\sigma)}\sum_{j=p+1}^{q}\frac{p_{i}}{\sigma_{i}}\left(1-\frac{p_{i}}{\sigma_{i}}\right)d_{ij}^{2}
\end{align*}
\begin{align*}
&=2\sum_{1\leq i<j\leq k_{1}(\sigma)}\frac{p_{i}+p_{j}}{\sigma_{i}+\sigma_{j}-p_{i}-p_{j}}\left(\frac{y_{ij}-y_{ji}}{2}\right)^{2}+2\sum_{{1\leq i\leq k_{1}(\sigma)}\atop{k_{1}(\sigma)+1\leq j\leq k_{2}(\sigma)}}\left[\frac{\widehat{\Omega}_{ij}}{1-\widehat{\Omega}_{ij}}\left(\frac{y_{ij}+y_{ji}}{2}\right)^{2}\right.\\
&\quad\left.+\frac{\widehat{\Gamma}_{ij}}{1-\widehat{\Gamma}_{ij}}\left(\frac{y_{ij}-y_{ji}}{2}\right)^{2}\right]+2\sum_{{1\leq i\leq k_{1}(\sigma)}\atop{k_{2}(\sigma)+1\leq j\leq r}}\left[\frac{p_{i}}{\sigma_{i}-\sigma_{j}-p_{i}}\left(\frac{y_{ij}+y_{ji}}{2}\right)^{2}+\frac{p_{i}}{\sigma_{i}+\sigma_{j}-p_{i}}\left(\frac{y_{ij}-y_{ji}}{2}\right)^{2}\right]\\
&\quad+2\sum_{{1\leq i\leq k_{1}(\sigma)}\atop{r+1\leq j\leq p}}\frac{p_{i}}{\sigma_{i}-p_{i}}\left[\left(\frac{y_{ij}+y_{ji}}{2}\right)^{2}+\left(\frac{y_{ij}-y_{ji}}{2}\right)^{2}\right]+\sum_{i=1}^{k_{1}(\sigma)}\sum_{j=p+1}^{q}\frac{p_{i}}{\sigma_{i}-p_{i}}y_{ij}^{2}\\
&\geq2\sum_{1\leq i<j\leq k_{1}(\sigma)}\frac{p_{i}+p_{j}}{\sigma_{i}+\sigma_{j}-p_{i}-p_{j}}\left(\frac{y_{ij}-y_{ji}}{2}\right)^{2}\\
&\quad+2\sum_{{1\leq i\leq k_{1}(\sigma)}\atop{k_{1}(\sigma)+1\leq j\leq r}}\left[\frac{p_{i}}{\sigma_{i}-\sigma_{j}-p_{i}}\left(\frac{y_{ij}+y_{ji}}{2}\right)^{2}+\frac{p_{i}}{\sigma_{i}+\sigma_{j}-p_{i}}\left(\frac{y_{ij}-y_{ji}}{2}\right)^{2}\right]\\
&\quad+2\sum_{{1\leq i\leq k_{1}(\sigma)}\atop{r+1\leq j\leq p}}\frac{p_{i}}{\sigma_{i}-p_{i}}\left[\left(\frac{y_{ij}+y_{ji}}{2}\right)^{2}+\left(\frac{y_{ij}-y_{ji}}{2}\right)^{2}\right]+\sum_{i=1}^{k_{1}(\sigma)}\sum_{j=p+1}^{q}\frac{p_{i}}{\sigma_{i}-p_{i}}y_{ij}^{2},
\end{align*}
where we have $y_{ij}+y_{ji}=0$ if $\widehat{\Omega}_{ij}=1$ with the convention that  
$\frac{\widehat{\Omega}_{ij}}{1-\widehat{\Omega}_{ij}}\left(\frac{y_{ij}+y_{ji}}{2}\right)^{2}:=0$, and where the first inequality is derived from the nonnegativity of the term $\{\cdots\}$, which is guaranteed by \cite[Proposition~3.2]{Tangwang2025} for the specific $Y$ satisfying $y_{ij}=y_{ji}=0$ whenever $1\leq i<j\leq k_{1}(\sigma)$, or, $i\in\beta_{1}$ and $j\in\gamma_{4}$, the last inequality holds as an equality with $\widehat{\Omega}_{ij}=\frac{p_{i}}{\sigma_{i}-\sigma_{j}}$ and $\widehat{\Gamma}_{ij}=\frac{p_{i}}{\sigma_{i}+\sigma_{j}}$ for $1\leq i\leq k_{1}(\sigma)$, $k_{1}(\sigma)+1\leq j\leq k_{2}(\sigma)$, and $s_{l}=k_{1}(\sigma)$ for $l=1,\ldots,t$. Setting $s:=k_{1}(\sigma)$ yields  $\beta_{2}=\emptyset$. For this specific $\overline{\mathcal{W}}\in\partial_{B}\operatorname{Prox}_{\|\cdot\|_{2}}(A)$, we conclude that
\begin{align*}
&\bigcup_{\mathcal{W}\in{\cal J}\operatorname{Prox}_{\|\cdot\|_{2}}(A)}\operatorname{rge}\mathcal{W}=\operatorname{rge}\overline{\mathcal{W}}\\
&=\left\{D-R[Z_{1}(s)\ Z_{2}(s)]S^{T}\, \left|\,Z_{1}(s)=\Omega(s)\circ \widehat{D}_{1}^{s}+\Gamma(s)\circ \widehat{D}_{1}^{a}-\frac{\operatorname{Tr}(\widehat{D}_{11}(s))}{s}\left[\begin{array}{cc}
I_{s} & 0\\ 0 & 0
\end{array}\right],\right.\right.\\
&\quad\quad\left.Z_{2}(s)=\left[\begin{array}{c}
\Upsilon\, 1^{T}_{q-p} \\0
\end{array}\right]\circ \widehat{D}_{2},\,s=k_{1}(\sigma),\, D\in\mathcal{R}^{p\times q}\right\}.
\end{align*}
Moreover, for any $Y\in\bigcup\limits_{\mathcal{W}\in{\cal J}\operatorname{Prox}_{\|\cdot\|_{2}}(A)}\operatorname{rge}\mathcal{W}$ with $R^{T}YS:=(y_{ij})_{p\times q}$, there exists $\overline{D}\in\mathcal{R}^{p\times q}$ with $R^{T}\overline{D}S:=(\bar{d}_{ij})_{p\times q}$ satisfying the conditions

\begin{align*}
\frac{1}{k_{1}(\sigma)}\sum_{i=1}^{k_{1}(\sigma)}\bar{d}_{ii}&=y_{ii},\,\forall\,i\in\beta_{1},\\
\frac{\bar{d}_{ij}-\bar{d}_{ji}}{2}&=\frac{\sigma_{i}+\sigma_{j}}{\sigma_{i}+\sigma_{j}-p_{i}-p_{j}}y_{ij}=-\frac{\sigma_{i}+\sigma_{j}}{\sigma_{i}+\sigma_{j}-p_{i}-p_{j}}y_{ji},\quad\forall\,i<j,\,i,j\in\beta_{1},\\
\bar{d}_{ij}&=\frac{\sigma_{i}-\sigma_{j}}{\sigma_{i}-\sigma_{j}-p_{i}}\frac{y_{ij}+y_{ji}}{2}+\frac{\sigma_{i}+\sigma_{j}}{\sigma_{i}+\sigma_{j}-p_{i}}\frac{y_{ij}-y_{ji}}{2},\quad \forall\,i\in\beta_{1},\,j\in\gamma_{3}\cup\beta_{4},\\
\bar{d}_{ji}&=\frac{\sigma_{i}-\sigma_{j}}{\sigma_{i}-\sigma_{j}-p_{i}}\frac{y_{ij}+y_{ji}}{2}-\frac{\sigma_{i}+\sigma_{j}}{\sigma_{i}+\sigma_{j}-p_{i}}\frac{y_{ij}-y_{ji}}{2},\quad \forall\,i\in\beta_{1},\,j\in\gamma_{3}\cup\beta_{4},\\
\bar{d}_{ij}&=\frac{\sigma_{i}}{\sigma_{i}-p_{i}}y_{ij},\quad
\bar{d}_{ji}=\frac{\sigma_{i}}{\sigma_{i}-p_{i}}y_{ji},\quad\forall\,i\in\beta_{1},\,j\in\beta_{5},\\
\bar{d}_{ij}&=\frac{\sigma_{i}}{\sigma_{i}-p_{i}}y_{ij},\quad \forall\,i\in\beta_{1},\,j=p+1,\ldots,q,\\
\bar{d}_{ij}&=y_{ij},\quad\forall\, i=k_{1}(\sigma)+1,\ldots,p,\,j=k_{1}(\sigma)+1,\ldots,q,
\end{align*} 
and being such that $Y=\overline{\mathcal{W}}\overline{D}$ with the equalities
\begin{align*}
\left\langle Y,\overline{D}-Y\right\rangle&=\min_{{D\in\mathcal{R}^{p\times q},Y=\mathcal{W}D}\atop{\mathcal{W}\in{\cal J}\operatorname{Prox}_{\|\cdot\|_{2}}(A)}}\left\langle Y,D-Y\right\rangle\\
&=2\sum_{1\leq i<j\leq k_{1}(\sigma)}\frac{p_{i}+p_{j}}{\sigma_{i}+\sigma_{j}-p_{i}-p_{j}}\left(\frac{y_{ij}-y_{ji}}{2}\right)^{2}\\
&\quad+2\sum_{{1\leq i\leq k_{1}(\sigma)}\atop{k_{1}(\sigma)+1\leq j\leq r}}\left[\frac{p_{i}}{\sigma_{i}-\sigma_{j}-p_{i}}\left(\frac{y_{ij}+y_{ji}}{2}\right)^{2}+\frac{p_{i}}{\sigma_{i}+\sigma_{j}-p_{i}}\left(\frac{y_{ij}-y_{ji}}{2}\right)^{2}\right]\\
&\quad+2\sum_{{1\leq i\leq k_{1}(\sigma)}\atop{r+1\leq j\leq p}}\frac{p_{i}}{\sigma_{i}-p_{i}}\left[\left(\frac{y_{ij}+y_{ji}}{2}\right)^{2}+\left(\frac{y_{ij}-y_{ji}}{2}\right)^{2}\right]+\sum_{i=1}^{k_{1}(\sigma)}\sum_{j=p+1}^{q}\frac{p_{i}}{\sigma_{i}-p_{i}}y_{ij}^{2}.
\end{align*}
This confirms that Assumption~\ref{assump-2} holds for the function $g=\|\cdot\|_{2}$ therein.
\end{appendices}\vspace*{-0.12in}

\section*{Statements and Declarations}
Research of Boris Mordukhovich was partly supported by the US National Science Foundation under grant DMS-2204519, by the Australian Research Council under Discovery Project DP-190100555, and by Project 111 of China under grant D21024.
All the authors contributed equally and do not have any competing interests, concent to participate and publish. All the data are available, and the ethics approval is not required.\vspace*{-0.12in}

\bibliographystyle{amsplain}
\bibliography{ref.bib}

\providecommand{\bysame}{\leavevmode\hbox to3em{\hrulefill}\thinspace}
\providecommand{\MR}{\relax\ifhmode\unskip\space\fi MR }
% \MRhref is called by the amsart/book/proc definition of \MR.
\providecommand{\MRhref}[2]{%
  \href{http://www.ams.org/mathscinet-getitem?mr=#1}{#2}
}
\providecommand{\href}[2]{#2}
\begin{thebibliography}{10}

\bibitem{BenkoGfrererMordukhovich2019}
M.~Benko, H.~Gfrerer, and B.~S. Mordukhovich, \emph{Characterizations of
  tilt-stable minimizers in second-order cone programming}, SIAM J. Optim.
  \textbf{29} (2019), 3100--3130.

\bibitem{Bonnans2000}
J.~F. Bonnans and A.~Shapiro, \emph{Perturbation {A}nalysis of {O}ptimization
  {P}roblems}, Springer, New York, 2000.

\bibitem{ChenLiuSunToh2016}
C.~H. Chen, Y.-J. Liu, D.~F. Sun, and K.-C. Toh, \emph{A semismooth
  {N}ewton-{CG} dual proximal point algorithm for matrix spectral norm
  approximation problems}, Math. Program. \textbf{155} (2016), 435--470.

\bibitem{Chieu2018}
N.~H. Chieu, L.~V. Hien, and T.~T.~A. Nghia, \emph{Characterization of tilt
  stability via subgradient graphical derivative with applications to nonlinear
  programming}, SIAM J. Optim. \textbf{28} (2018), 2246--2273.

\bibitem{Chieu2021}
N.~H. Chieu, L.~V. Hien, T.~T.~A. Nghia, and H.~A. Tuan, \emph{Quadratic growth
  and strong metric subregularity of the subdifferential via subgradient
  graphical derivative}, SIAM J. Optim. \textbf{31} (2021), 545--568.

\bibitem{Drusvyatskiy2013}
D.~Drusvyatskiy and A.~S. Lewis, \emph{Tilt stability, uniform quadratic
  growth, and strong metric regularity of the subdifferential}, SIAM J. Optimn.
  \textbf{23} (2013), 256--267.

\bibitem{Drusvyatskiy2014}
D.~Drusvyatskiy, B.~S. Mordukhovich, and T.~T.~A. Nghia, \emph{Second-order
  growth, tilt stability, and metric regularity of the subdifferential}, J.
  Convex Anal. \textbf{21} (2014), 1165--1192.

\bibitem{Helmut2025}
H.~Gfrerer, \emph{On second-order variational analysis of variational convexity
  of prox-regular functions}, Set-Valued Var. Anal. \textbf{35} (2025),
  https://doi.org/10.1007/s11228--025--00744--8.

\bibitem{GfrererMordukhovich2015}
H.~Gfrerer and B.~S. Mordukhovich, \emph{Complete characterizations of tilt
  stability in nonlinear programming under weakest qualification conditions},
  SIAM J. Optim. \textbf{25} (2015), 2081--2119.

\bibitem{Helmut2019}
\bysame, \emph{Second-order variational analysis of parametric constraint and
  variational systems}, SIAM J. Optim. \textbf{29} (2019), 423--453.

\bibitem{Gfrerer2022}
H.~Gfrerer and J.~V. Outrata, \emph{On (local) analysis of multifunctions via
  subspaces contained in graphs of generalized derivatives}, J. Math. Anal.
  Appl. \textbf{508} (2022), 125895.

\bibitem{Khanh2023}
P.~D. Khanh, B.~S. Mordukhovich, and V.~T. Phat, \emph{A generalized {N}ewton
  method for subgradient systems}, Math. Oper. Res. \textbf{48} (2023),
  1811--2382.

\bibitem{Khanh2025}
P.~D. Khanh, B.~S. Mordukhovich, V.~T. Phat, and L.~D. Viet,
  \emph{Characterizations of variational convexity and tilt stability via
  quadratic bundles}, J. Convex Anal., to appear (2025). arXiv:2501.04629v1
  (2025).

\bibitem{LiuSunPan2019}
Y.~L. Liu, Y.~Sun, and S.~H. Pan, \emph{Computation of graphical derivatives of
  normal cone maps to a class of conic constraint sets}, Set-Valued Var. Anal.
  \textbf{27} (2019), 783--806.

\bibitem{MohammadBoris2020}
A.~Mohammadi, B.~S. Mordukhovich, and M.~E. Sarabi, \emph{Superlinear
  convergence of the sequential quadratic method in constrained optimization},
  J. Optim. Theory Appl. \textbf{186} (2020), 731--758.

\bibitem{Mohammadi2021}
\bysame, \emph{Parabolic regularity in geometric variational analysis}, Trans.
  Amer. Math. Soc. \textbf{374} (2021), 1711--1763.

\bibitem{Mohammadi2022}
\bysame, \emph{Variational analysis of composite models with applications to
  continuous optimization}, Math. Oper. Res. \textbf{47} (2022), 397--426.

\bibitem{Mohammadi2020}
A.~Mohammadi and M.~E. Sarabi, \emph{Twice epi-differentiability of
  extended-real-valued functions with applications in composite optimization},
  SIAM J. Optim. \textbf{30} (2020), 2379--2409.

\bibitem{Mordukhovich1992}
B.~S. Mordukhovich, \emph{Sensitivity analysis in nonsmooth optimization},
  Theoretical Aspects of Industrial Design, D. A. Field and V. Komkov, eds.,
  SIAM, Philadelphia, PA, 1992, pp.~32--46.

\bibitem{Mordukhovich2018}
\bysame, \emph{{V}ariational {A}nalysis and {A}pplications}, Springer, Cham,
  Switzerland, 2018.

\bibitem{Mordukhovich2024}
\bysame, \emph{{S}econd-{O}rder {V}ariational {A}nalysis in {O}ptimization,
  {V}ariational {S}tability, and {C}ontrol: {T}heory, {A}lgorithms,
  {A}pplications}, Springer, Cham, Switzerland, 2024.

\bibitem{MordukhovichNghia2015}
B.~S. Mordukhovich and T.~T.~A. Nghia, \emph{Second-order characterizations of
  tilt stability with applications to nonlinear programming}, Math. Program.
  \textbf{149} (2015), 83–104.

\bibitem{MordukhovichNghiaRockafellar2015}
B.~S. Mordukhovich, T.~T.~A. Nghia, and R.~T. Rockafellar, \emph{Full stability
  in finite-dimensional optimization}, Math. Oper. Res. \textbf{40} (2015),
  226--252.

\bibitem{MordukhovichOutrataRamirez2015}
B.~S. Mordukhovich, J.~V. Outrata, and H.~Ram\'{\i}rez~C., \emph{Second-order
  variational analysis in conic programming with applications to optimality and
  stability}, SIAM J. Optim. \textbf{25} (2015), 76--101.

\bibitem{MordukhovichOutrataSarabi2014}
B.~S. Mordukhovich, J.~V. Outrata, and M.~E. Sarabi, \emph{Full stability of
  locally optimal solutions in second-order cone programs}, SIAM J. Optim.
  \textbf{24} (2014), 1581--1613.

\bibitem{Mordukhovich2012}
B.~S. Mordukhovich and R.~T. Rockafellar, \emph{Second-order subdifferential
  calculus with applications to tilt stability in optimization}, SIAM J. Optim.
  \textbf{22} (2012), 953--986.

\bibitem{Mordukhovich2021}
B.~S. Mordukhovich and M.~E. Sarabi, \emph{Generalized {N}ewton algorithms for
  tilt-stable minimizers in nonsmooth optimization}, SIAM J. Optim. \textbf{31}
  (2021), 1184--1214.

\bibitem{MordukhovichTangWang2025}
B.~S. Mordukhovich, P.~P. Tang, and C.~J. Wang, \emph{Characterizations of
  strong variational sufficiency in nonpolyhedral composite optimization},
  arXiv:2507.09522 (2025).

\bibitem{Poliquin1998}
R.~A. Poliquin and R.~T. Rockafellar, \emph{Tilt stability of a local minimum},
  SIAM J. Optim. \textbf{8} (1998), 287--299.

\bibitem{Rockafellar1970}
R.~T. Rockafellar, \emph{{C}onvex {A}nalysis}, Princeton University Press,
  1970.

\bibitem{Rockafellar2025}
\bysame, \emph{Derivative tests for prox-regularity and the modulus of
  convexity}, J. Convex Anal., to appear (2025),
  \url{https://sites.math.washington.edu/~rtr/papers.html}.

\bibitem{Rockafellar1998}
R.~T. Rockafellar and R.~J.-B. Wets, \emph{{V}ariational {A}nalysis}, Springer,
  New York, 1998.

\bibitem{Tangwang2025}
P.~P. Tang and C.~J. Wang, \emph{Perturbation analysis of a class of composite
  optimization problems}, arXiv:2401.10728v2 (2025).

\end{thebibliography}
\end{document}